\documentclass[12pt]{article}
\usepackage{amsmath,amssymb,amsthm,bm}
\usepackage{enumitem}
\usepackage{color}
\usepackage{tikz-cd}
\usepackage{url}
\usepackage{url}

\usepackage[backend=biber]{biblatex}
\theoremstyle{plain}
\newtheorem{theorem}{Theorem}[section]
\newtheorem{lemma}[theorem]{Lemma}
\newtheorem{corollary}[theorem]{Corollary}
\newtheorem{definition}[theorem]{Definition}
\newtheorem{example}[theorem]{Example}

\newcommand{\Z}{\mathbb{Z}}

\newcommand{\K}{\mathbb{K}}

\newcommand{\N}{\mathbb{N}}

\newcommand{\mx}{\bm{x}}

\newcommand{\Kx}{\K[\bm{x}]}

\newcommand{\mal}{\bm{\alpha}}

\newcommand{\mbe}{\bm{\beta}}

\newcommand{\mga}{\bm{\gamma}}

\newcommand{\mde}{\bm{\delta}}

\newcommand{\Ka}{\K_{\mal}}
\newcommand{\Kab}{\,_{\mal}\K_{\mbe}}

\DeclareMathOperator{\Der}{Der}

\DeclareMathOperator{\Sp}{ssp}

\DeclareMathOperator{\Lt}{lt}
\DeclareMathOperator{\lt}{lt}
\DeclareMathOperator{\Lc}{lc}
\DeclareMathOperator{\lc}{lc}
\DeclareMathOperator{\Lm}{lm}
\DeclareMathOperator{\lm}{lm}

\DeclareMathOperator{\Ker}{ker}

\DeclareMathOperator{\Frac}{Frac}
\DeclareMathOperator{\codim}{codim}

\newcommand\restr[2]{{
  \left.\kern-\nulldelimiterspace 
  #1 
  \vphantom{\big|} 
  \right|_{#2} 
  }}

\title{Describing Multivariate Polynomial Subalgebras Using Equations}
\author{ Erik Leffler\thanks{Lund University}}
\date{}
\begin{document}
\maketitle

\begin{abstract}
	In this paper we continue the work of describing polynomial subalgebras of
	finite codimension that was started in \cite{gronkvist2022subalgebras}. Let
	$\K$ be an algebraically closed field, and $A \subset \K[x_{1}, \ldots,
			x_n]$ be a subalgebra of finite codimension. It is known that there exists
	a (not necessarily unique) finite filtration of $\K$-algebras
	\[
		A = A_{0} \subset A_{1} \subset \ldots \subset A_m = \K[x_{1}, \ldots, x_n],
	\]
	where each $A_i$ can be written as the kernel of some linear functional
	$L_{i + 1} : A_{i + 1} \to \K$, and each $L_i$ is either a derivation or of
	the form $L_i : f \to c(f(\mal) - f(\mbe))$ for some $\mal, \mbe \in
		\K^{n}$ and $c \in \K$. We investigate the structure of these filtrations
	and linear functionals. Our main result shows that each such $L_i$ which is
	a derivation may be written as a linear combination of partial derivatives
	evaluated at points of $\K^{n}$.
	
\end{abstract}
\textbf{Keywords}: Polynomial subalgebra \and Subalgebra spectrum \and Defining Conditions \and Derivation
\noindent
\textbf{2020 Mathematics Subject Classification}: 13P05 \and  13P10 \and 13P15 \and 12H05

\tableofcontents

\section{Introduction}
\label{intro}

Let $\K$ be an algebraically closed field of characteristic $0$, and $\mx =
\{x_1,x_2, \ldots, x_n\}$ be a set of $n$ indeterminates. Throughout this text
we shall be concerned with polynomial subalgebras $A \subseteq \Kx$ of finite
codimension. We will require that our subalgebras are unital and thus contain
the scalar field $\K \subset A$.  Usually such subalgebras would be described
in terms of a generating set. An example in the univariate case is the
subalgebra $A_1 \subset \K[x]$ generated by the polynomials $x^3, x^4$ and
$x^5$. Then $A_1$ consists of all polynomials which contain no first or second
degree terms. In \cite{gronkvist2022subalgebras}, a theory is developed to
describe such subalgebras using defining conditions. For example, the same
subalgebra $A_1$ can be written using conditions as follows,
$$
	A_1 = \{f \in \Kx\ |\ f'(0) = f''(0) = 0\}.
$$
Another example: let $A_2 = \K\left[x^3 - x, x^2 \right]$ be the smallest
algebra containing $x^3 - x, x^2$, and $\K$. This algebra can be written using
conditions as
$$
	A_2 = \{f \in \Kx\ |\ f(1) = f(-1)\}.
$$
The theory developed in \cite{gronkvist2022subalgebras} is limited to
univariate subalgebras of finite codimension. This paper generalizes most of
those results to the multivariate case. We remain limited to subalgebras of
finite codimension. \\

A multivariate example is given by the algebra
\[
	A_3
	=
	\K\left[
	x_{2},\:
	x_{1}^{2},\:
	x_{1}x_{2}^{2} - 2\,\:
	x_{1}x_{2} + x_{1},\:
	x_{1}^{3}
	\right],
\]
which can be described by conditions as 
\[
	A_3
	=
	\{
	f \in \Kx 
	:
	f'_{x_1}(0, 1) = f''_{x_1,x_2}(0, 1) = 0
	\}.
\]
At this point, it may be difficult to see that these descriptions are
equivalent, but it will become clear once we have introduced the necessary
theory. \\ 

The objective of this paper is to better understand subalgebras defined by
conditions, and the structure of such conditions. An immediate consequence of
our main result given in Theorem \ref{th-main}, is that every subalgebra of
finite codimension in $\Kx$ can be written as the set of polynomials $f \in
	\Kx$ satisfying a finite set of conditions, where each condition is either of
the type $f(\mal) = f(\mbe)$ for some $\mal, \mbe \in \K^{n}$, or a linear
relation on the partial derivatives of $f$ evaluated at points of $\K^{n}$. \\

Unfortunately, subalgebras of finite codimension often require generating sets
which are too large for us to interpret. Therefore, most examples of
subalgebras in this paper will be defined using conditions. The problem rapidly
grows worse as we increase either the number of indeterminates or the
codimension of the subalgebras we consider. Above we saw an example for a
subalgebra of codimension $2$ in $\K[x_1,x_2]$. To illustrate the issue, we
include a computer-generated example of a subalgebra of codimension $3$ in
$\K[x_1,x_2,x_3]$. Let $A_4 \subset \K[x_1,x_2,x_3]$ be defined by conditions
as  
\begin{align*}
	A_4
	=
	\{
	f \in \Kx 
	:
	 & f'_{x_3}(1, 0, -1) = 0,
	f(3, 2, 5) = f(1, -3, 2),   \\
	 & f'_{x_1}(3,2,5)
	-
	3f'_{x_2}(1, -3, 2) = 0
	\}.
\end{align*}
Then we can describe $A_4$ via a generating set as 

\begingroup
\allowdisplaybreaks
\begin{align*}
	A_4
	=
	\K\Bigg[ 
	 & y_{3}^{2}-\frac{81}{11}y_{1}-\frac{27}{11}y_{2}+2\,y_{3},\quad
	y_{2}y_{3}-\frac{18}{11}y_{1 }-\frac{28}{11}y_{2},\quad
	y_{1}y_{3}-5\,y_{1}-y_{3},                                           \\
	 & y_{2}^{2}-\frac{75}{11}y_{1}+\frac{41}{11}y _{2},\quad
	y_{1}y_{2}-2\,y_{1}-y_{2},\quad
	y_{1}^{2}-\frac{54}{11}y_{1}+\frac{4}{11}y_{2},                      \\
	 & y_{3}^{3}-\frac{324}{11}y_{1}-\frac{108}{11}y_{2}-3\,y_{3},\quad
	y_{2}y_{3}^{2}-\frac{126}{11}y_{1}-\frac{86}{ 11}y_{2},\quad
	y_{1}y_{3}^{2}-\frac{356}{11}y_{1}-\frac{27}{11}y_{2}+2\,y_{3},      \\
	 & y_{2}^{2}y_{3}-\frac{186}{11}y_{1}+\frac{70}{11}y_{2},\quad
	y_{1}y_{2}y_{3}-\frac{128}{11}y_{1}-\frac{28}{11}y_{2 },\quad
	y_{1}^{2}y_{3}-\frac{270}{11}y_{1}+\frac{20}{11}y_{2}-y_{3},         \\
	 & y_{2}^{3}+\frac{300}{11}y_{1 }-\frac{197}{11}y_{2},\quad
	y_{1}y_{2}^{2}-\frac{119}{11}y_{1}+\frac{41}{11}y_{2},\quad
	y_{1}^{2}y_{2 }-\frac{108}{11}y_{1}-\frac{3}{11}y_{2},               \\
	 & y_{1}^{3}-\frac{213}{11}y_{1}+\frac{28}{11}y_{2}
	\Bigg].
\end{align*}
\endgroup

The algorithm used to produce the generating set is based on Theorem
\ref{th-build_sagbi}, and this algorithm does not necessarily produce a minimal
generating set. It does however produce a minimal SAGBI basis, a concept which
will be explained shortly. 

\subsection{Notations and Conventions}
\label{Conventions}

We present some notations and conventions we will use before proceeding. \\ 

We adopt the convention that the set of natural numbers $\N$ includes $0$. \\ 

Instead of writing $\K[x_1,x_2, \ldots, x_n]$ we shall simply write $\Kx$.
Sometimes, instead of writing $x_1^{a_1}x_2^{a_2}\ldots x_n^{a_n}$ we will
write $\bm{x}^{\bm{a}}$ where $\bm{a} = (a_1, a_2, \ldots, a_n)$. Moreover, the
variable $n$ will always be used to denote the number of free variables $\bm{x}
	= \{x_1,x_2,\ldots,x_n\}$ in our algebra. \\

Given some term ordering, we will use $\lm(f), \lt(f), \lc(f)$ to denote the
leading monomial, leading term, and leading coefficient of a polynomial $f \in
\Kx$. Moreover, $\deg(f)$ will be used to denote the multidegree of $f$, and we
shall simply call this the degree of $f$. If $S \subset \Kx$ is a set of
polynomials, we will abuse notation and write $\Lm(S), \Lt(S), \Lc(S), \deg(S)$
to denote the sets obtained by applying the corresponding functions to all
elements of $S$. \\

Given a set $G$ and a multiplicative operation on the elements of $G$, we
denote by $G_{\text{mon}}$ the set of all finite products of elements of $G$.
\\

Given a vector space $V$ and a subset $S \subset V$, we denote by $\langle S
\rangle$ the subspace of $V$ generated by $S$. \\

We write the range of integers $\{x \in \Z : l \leq x \leq u\}$ as $[l..u]$.
Note the inclusive bounds. \\

Let $\mal \in \K^{n}$ be a point. Then $\mathfrak{m}_{\mal}$ will denote the
maximal ideal in $\Kx$ of polynomials vanishing at $\mal$, and
$\mathfrak{m}_{\mal}(A) = \mathfrak{m}_{\mal} \cap A$ will denote the
restriction of this ideal to $A$ whenever $A \subset \Kx$ is a subalgebra. We
will write $\Ka = \Kx/\mathfrak{m}_{\mal}$ for the $\Kx$-module which is equal
to $\K$ as a set, but where polynomials $f \in \Kx$ act on scalars $c \in \Ka$
via $f \cdot c = f(\mal)c$. \\

Whenever $A$ is a $\K$-algebra, a $\K$-algebra morphism $L : A \to \K$ will be
called a character.

\section{Background}
\label{background}

\subsection{SAGBI Basis}
\label{sagbi}

We will use SAGBI bases occasionally throughout the text, and give a brief
definition here. For a proper treatment of SAGBI bases, we refer to
\cite{robbianosweedler1990sagbi}.

\begin{definition}
	Let $A \subseteq \Kx$ be a subalgebra. A subset $G \subseteq A$ is said to
	be a SAGBI basis for $A$ if $\deg(G)$ generates $\deg(A)$ as a semigroup,
	or equivalently, if $\Lm(G)_{\text{mon}} = \Lm(A)$.
\end{definition}

One of the main benefits of SAGBI bases is that they enable the subduction
algorithm, which can be used to test subalgebra membership. We give a quick
demonstration: Let $f \in \Kx$. If $\lm(f) \in \Lm(G)_{\text{mon}}$, then there
exists a scalar $a_0 \in \K$ and product of elements in $G$, call it $h_0 =
\prod_{g_i \in G}g_i^{b_i}$, such that $a_0h_0$ has the same leading term as
$f$. Then $\lm(f - a_0 h_0) < \lm(f)$. Set $f_0 = f - a_0h_0$. If $\lm(f_0) \in
\Lm(G)_{\text{mon}}$, we repeat the process above to obtain a polynomial $f_1 =
f_0 - a_1h_1$ with $\lm(f_1) < \lm(f_0)$. We continue this process until we
reach a point where either $f_m = 0$, or $\lm(f_m) \not \in
\Lm(G)_{\text{mon}}$. We say that $f$ subduces to $f_m$ over $G$. When $G$ is a
SAGBI basis for $A$, $f$ subduces to $0$ if and only if $f \in A$. \\ 

In general, a subalgebra $A \subset \Kx$ need not admit a finite SAGBI basis. 
Thankfully, subalgebras $A \subset \Kx$ of finite codimension always do.
\begin{lemma}\label{lm-finit_SAGBI}
    Let $A \subset \Kx$ be a subalgebra of finite codimension. Then $A$ admits
    a finite SAGBI basis.
\end{lemma}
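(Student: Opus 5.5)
Since $A$ has finite codimension in $\Kx$, I will show that the semigroup $\deg(A)$ is cofinite in $\N^n$ and then that any cofinite subsemigroup of $\N^n$ is finitely generated. Lifting a finite generating set of $\deg(A)$ to elements of $A$ with those leading monomials then gives a finite SAGBI basis.

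\textbf{Step 1: $\N^n \setminus \deg(A)$ is finite.} I will show $|\N^n \setminus \deg(A)| \le \codim A$. Let $\bm{a}_1, \ldots, \bm{a}_k$ be distinct elements of $\N^n \setminus \deg(A)$. If some nontrivial combination $\sum c_i \mx^{\bm{a}_i}$ lay in $A$, its leading monomial would be one of the $\mx^{\bm{a}_i}$, contradicting $\bm{a}_i \notin \deg(A)$. Hence the monomials $\mx^{\bm{a}_i}$ are linearly independent modulo $A$, and so $k \le \codim A$.

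\textbf{Step 2: a cofinite subsemigroup $S \subseteq \N^n$ is finitely generated.} Put $S = \deg(A)$; it contains $\bm 0$ because $1 \in A$. By Step 1 there is an integer $N$ such that every $\bm{a}$ with $|\bm{a}| \ge N$ lies in $S$.

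I claim $S$ is generated by the finite set $T = \{\bm{a} \in S : |\bm{a}| < 2N\}$. Take $\bm{a} \in S$ with $|\bm{a}| \ge 2N$. Choose $\bm{b} \le \bm{a}$ componentwise with $|\bm{b}| = N$, which is possible by lowering coordinates one at a time. Then $|\bm{b}| = N$ and $|\bm{a} - \bm{b}| \ge N$, so both $\bm{b}$ and $\bm{a} - \bm{b}$ lie in $S$, and $\bm{a} = \bm{b} + (\bm{a}-\bm{b})$. Induction on $|\bm{a}|$ finishes the claim.

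This is the only step with genuine content. The main subtlety is remembering that cofiniteness is needed, since $\N^n$ has non-finitely generated subsemigroups. The degree-splitting trick above handles it. Alternatively one could invoke Dickson's lemma.

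\textbf{Step 3: lift to $A$.} For each $\bm{t} \in T$ pick $g_{\bm t} \in A$ with $\deg(g_{\bm t}) = \bm t$, and set $G = \{g_{\bm t} : \bm{t} \in T\}$. Then $\deg(G)$ generates $\deg(A)$ as a semigroup, so $G$ is a finite SAGBI basis by definition.
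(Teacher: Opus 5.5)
Your proof is correct, and it takes a genuinely different route from the paper. The paper gives no argument of its own; it simply cites Propositions 4.7 and 4.9 of \cite{robbianosweedler1990sagbi}. You instead give a self-contained proof in two independent pieces:
\begin{itemize}
\item a linear-algebra step showing that finite codimension makes $\deg(A)$ cofinite in $\N^n$. This is the inequality half of the paper's later Lemma~\ref{lm-eq_dim_lm}, which in fact gives $|\N^n \setminus \deg(A)| = \codim A$.
\item a purely combinatorial step showing that a cofinite submonoid of $\N^n$ is generated by its elements of total degree below $2N$.
\end{itemize}

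Two small points should be made explicit. First, choose $N \geq 1$, so that $T$ is nonempty and the induction genuinely lowers $|\bm{a}|$. Second, $\deg(A)$ is closed under addition because $\lm(fg) = \lm(f)\lm(g)$. You use this implicitly to get that the semigroup generated by $T$ lies inside $\deg(A)$.

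The citation keeps the paper short, while your argument is elementary and effective. It even yields an explicit degree bound. Let $\bm{a}$ be a gap. For every $\bm{b} \leq \bm{a}$ componentwise, one of $\bm{b}$ and $\bm{a} - \bm{b}$ must be a gap. Pairing $\bm{b}$ with $\bm{a} - \bm{b}$ splits the box below $\bm{a}$, which has at least $|\bm{a}| + 1$ points, into at least $(|\bm{a}|+1)/2$ classes, each containing a gap. This forces $|\bm{a}| \leq 2\codim A - 1$. So for $A \neq \Kx$ one can take $N = 2\codim A$, giving a finite SAGBI basis whose leading exponents all have total degree less than $4\codim A$.
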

\begin{proof}
    Follows from Proposition 4.7 and 4.9 in \cite{robbianosweedler1990sagbi}.
    
\end{proof}

\subsection{Linear Functional Lemma}

Computing kernels of sets of linear functionals will be central throughout this
text, and the following elementary lemma will prove useful.

\begin{lemma}\label{lm-intersect_lin_funcs}
	Let $V$ be a vector space over the field $\mathbb{K}$ and let $L_i : V
		\rightarrow \mathbb{K}$ for $i \in [1..m]$ be linear functionals. If we
	denote 
	\[
		W = \bigcap_{i = 1}^m \Ker(L_i),
	\]
	and $L : V \rightarrow \K$ is a linear functional on $V$ such that
	$\restr{L}{W} = 0$. Then
	\[
		L = \sum_{i = 1}^m  c_i L_i
	\]
	for some set of scalars $c_i \in \K$.
\end{lemma}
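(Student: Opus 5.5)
The plan is to reduce the statement to a linear-algebra fact about the finite-dimensional dual space. I will assemble the functionals into a single map $\Phi : V \to \K^m$, $\Phi(v) = (L_1(v), \ldots, L_m(v))$. Then $\Ker(\Phi) = W$, and the image $U = \Phi(V) \subseteq \K^m$ is a finite-dimensional subspace. Because $\restr{L}{W} = 0$, $L$ vanishes on $\Ker(\Phi)$. Hence $L$ factors through $V/W \cong U$: there is a well-defined linear functional $\bar L : U \to \K$ with $L = \bar L \circ \Phi$. Well-definedness holds because $\Phi(v) = \Phi(v')$ implies $v - v' \in W$, so $L(v) = L(v')$. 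Linearity of $\bar L$ is immediate from linearity of $L$ and $\Phi$.

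Next I extend $\bar L$ from the subspace $U$ to all of $\K^m$. To do this, choose a basis of $U$, complete it to a basis of $\K^m$, and set $\bar L$ to be zero on the added basis vectors. This gives a functional $\tilde L : \K^m \to \K$ agreeing with $\bar L$ on $U$. This step needs no choice beyond finite dimension, since $U \subseteq \K^m$.

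Every linear functional on $\K^m$ has the form $\tilde L(y_1, \ldots, y_m) = \sum_{i=1}^m c_i y_i$, where $c_i = \tilde L(e_i)$ for the standard basis vectors $e_i$. Therefore, for every $v \in V$,
\[
	L(v) = \tilde L(\Phi(v)) = \sum_{i=1}^m c_i L_i(v),
\]
which is the claimed identity.

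There is no serious obstacle here. The only point requiring care is that $V$ may be infinite-dimensional, as in the intended application $V = \Kx$. This is why I route the argument through the finite-dimensional image $U$ rather than trying to extend bases of $V$ or of its dual. The factorization step, showing $\bar L$ is well defined from $\Ker(\Phi) = W$, is the key point, and it is exactly where the hypothesis $\restr{L}{W} = 0$ enters.
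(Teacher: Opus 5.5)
Your proof is correct, but it takes a different route from the paper's. The paper argues by induction on $m$. The base case $m=1$ picks $v_1$ with $L_1(v_1)=1$, writes $v = L_1(v)v_1 + u$ with $u \in \Ker(L_1)$, and reads off $L = L(v_1)L_1$. The inductive step applies this one-functional case on the subspace $W_{m-1} = \bigcap_{i<m}\Ker(L_i)$ to get $\restr{L}{W_{m-1}} = c_m \restr{L_m}{W_{m-1}}$, and then feeds $L - c_m L_m$ to the induction hypothesis.

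You instead argue in one step. You factor $L$ through the image $\Phi(V) \subseteq \K^m$, which amounts to factoring through $V/W$ and shows in passing that $\dim(V/W) \leq m$. You then extend the induced functional to all of $\K^m$ and read off coefficients.

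The paper's argument is more elementary: it only ever uses the decomposition $V = \K v_1 \oplus \Ker(L_1)$ and never needs to complete a basis. It also produces the coefficients recursively, as values of $L$ on suitable vectors. Your argument is more conceptual. It makes transparent why $V$ being infinite-dimensional is harmless, as you note. It also shows that the non-uniqueness of the $c_i$, when the $L_i$ are linearly dependent, is exactly the freedom in how $\bar L$ is extended off $U$.
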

\begin{proof}
	We prove this by induction on $m$. \\
	
	Suppose that $m = 1$. The result is trivial whenever $L$ or $L_1$ are
	trivial, so assume that they are not. Let $v_1 \in V$ be such that
	$L_1(v_1) = 1$, and let $v \in V$. As $L_1(v - L_1(v)v_1) = 0$, we have $v
		= L_1(v) v_1 + u$ for some $u \in \Ker(L_1)$. As $L$ vanishes on
	$\Ker(L_1)$ we get $L(v) = L_1(v)L(v_1)$, and the statement of the lemma
	holds with $c_1 = L(v_1)$. \\
	
	Now suppose that the statement of the lemma holds for $m-1$ linear
	functionals. Let $W_{m - 1} = \bigcap_{i = 1}^{m - 1} \Ker(L_i)$. Then
	$\restr{L}{W_{m - 1}}$ vanishes on $\Ker(\restr{L_m}{W_{m - 1}})$, so
	$\restr{L}{W_{m - 1}} = c_m \restr{L_m}{W_{m - 1}}$ for some scalar $c_m
		\in \K$. Thus $L - c_m L_m$ vanishes on $W_{m - 1}$, and the inductive
	hypothesis yields $L = \sum_{i = 1}^m  c_i L_i$. 
	
\end{proof}

\section{Subalgebra Conditions}

This section will develop definitions of different kinds of subalgebra
conditions.

\begin{definition}
    Let $A$ be a $\K$-algebra and $L : A \to \K$ be a linear functional. We say
    that $L$ is a subalgebra condition on $L$ whenever $\ker(L)$ is a
    subalgebra of $A$.
\end{definition}

We refer to a finite set $\mathcal{L}$ of $|\mathcal{L}| = m$ linear
functionals $A \to \K$ as a set of subalgebra conditions whenever we can label
the functionals $\{L_{1}, \ldots, L_m\} = \mathcal{L}$ in such a way that 
\[
	A_i = \bigcap_{k = 1}^{m} \ker(L_k)
\]
is a subalgebra for all $i$. Thus a set of $m$ subalgebra conditions induces a
filtration of subalgebras
\[
	A_{m} \subset A_{m - 1} \ldots \subset A_{0} = A,
\]
where for each $i$, $L_i$ is a subalgebra condition on $A_{i - 1}$. Note that
we do not require the $L_i$ to be subalgebra conditions on $A$. \\

If $\mathcal{L}$ is a set of subalgebra conditions, we define the kernel of
$\mathcal{L}$ as $\Ker(\mathcal{L}) = \bigcap_{L \in \mathcal{L}} \Ker(L)$. The
definitions ensure that the kernel of a set of subalgebra conditions is a
subalgebra. \\

We will see in Theorem \ref{th-gorin} that every subalgebra of codimension $m$
in $\Kx$ can be written as the kernel of a set of $m$ subalgebra conditions,
each of which is one of two types to be defined below: a character difference,
or an $\mal$-derivation.

\subsection{Character Differences}

\begin{definition} 
	Let $\mal, \mbe \in \K^n, \mal \not = \mbe$ and $c \in \K$. We define a
	character difference to be a function $E : \Kx \to \K$ of the kind $E(f) =
		c (f(\mal) - f(\mbe))$. When we want to emphasize the points of evaluation,
	we call $E$ an $\mal, \mbe$-character difference. 
\end{definition}

It is straightforward to verify that the kernel of a character difference is
indeed an algebra. It's also easy to see that the set of $\mal, \mbe$-character
differences form a vector space. 

\subsection{$\mal$-Derivations}

We will define the second type of subalgebra condition as a special kind of
derivation. We recall the definition of a $\K$-linear derivation below for
convenience, and refer to \cite{eisenbud} for a thorough treatment.

\begin{definition}
	Let $A$ be a $\K$-algebra and $M$ be an $A$-module. A $\K$-linear
	function $D : A \to M$ is said to be a derivation if it satisfies the
	Leibniz rule,
	\[
		D(fg) = f \cdot D(g) + g \cdot D(f)
	\]
	for all $f, g \in A$. The set of $\K$-linear derivations $D : A \to M$ will
	be denoted by $\Der_{\K}(A, M)$, and this set forms an $A$-module where $f
		\in A$ acts on $D \in \Der_{\K}(A, M)$ via $(f \cdot D) : g \mapsto f \cdot
		D(g)$.
\end{definition}

Now we define the second type of subalgebra condition, which is obtained by
letting $M = \Ka$ in the definition above.

\begin{definition}
	Let $A \subset \Kx$ be a subalgebra and $\mal \in \K^{n}$. A $\K$-linear
	derivation from $A$ to $\Ka$ is called an $\mal$-derivation. The $A$-module
	$\Der_{\K}(A, \Ka)$ of all $\mal$-derivations on $A$ is called the
	$\mal$-derivation space, and we will write it as $\mathcal{D}_{\mal}(A)$.
\end{definition}
Note that $\mal$-derivations are precisely the linear functionals $D : A \to
\K$ which for all $f, g \in A$ satisfy
\[
	D(fg) = f(\mal)D(g) + D(f)g(\mal).
\]

As the kernel of any derivation is a subalgebra, it is immediate that
$\mal$-derivations are indeed subalgebra conditions. \\

The definition above is implicit in the sense that it does not tell us how to
construct $\mal$-derivations. This is a non-trivial issue that will be
addressed progressively throughout the paper, and our main contribution to this
end is the characterisation of $\mal$-derivations given in Theorem
\ref{th-main}. For now, we can give a couple of examples. \\

\begin{example}
	Let $A = \Kx$. Then it is easy to check that $D : A \to \K$ given by
	\[
		D(f) = \sum_{i = 1}^{n} a_i f'_{x_i}(\mal)
	\]
	is an $\mal$-derivation over $A$ for all choices of scalars $a_i$. If we now construct the algebra
	$A' = \Kx \cap \Ker\left(f \mapsto f'_{x_1}(\mal)\right)$, then 
	\[
		f 
		\mapsto 
		b_2 f''_{x_1 x_1}(\mal) + b_3 f'''_{x_1x_1 x_1}(\mal) + \sum_{i=2}^n
		a_i f'_{x_i}(\mal)
	\]
	is an $\mal$-derivation over $A'$ but not over $A$. 
	In a similar fashion,
	\[
		f \mapsto b_1 f^{(4)}_{x_1 x_1 x_1 x_1}(\mal) + \sum_{i=2}^n a_i f'_{x_i}(\mal)
	\]
	is an $\mal$-derivation over
	\[
		\Kx 
		\cap 
		\Ker\left(f \mapsto f'_{x_1}(\mal)\right) 
		\cap 
		\Ker\left(f \mapsto f_{x_1x_1}''(\mal)\right),
	\]
	but not over $A$ or $A'$. 
\end{example}

We will see later in the Main Theorem of $\mal$-Derivations (Theorem
\ref{th-main}) that all $\mal$-derivations can be expressed as linear
combinations of derivative evaluations as in the examples above.

\subsection{Unification of $\mal$-Derivations and Character Differences}

There is a perspective which unifies the definition of character differences
with that of $\mal$-derivations. To observe this, we need to consider
subalgebra conditions in the more general context of bimodule derivations. 
\begin{definition}
	Let $A$ be a $\K$-algebra, $M$ an $A$-bimodule and $D : A \to M$ be a
	$\K$-linear function. Then $D$ is a $\K$-linear bimodule derivation if it
	satisfies the Leibniz rule
	\[
		D(fg) = f \cdot D(g) + D(f) \cdot g,
	\]
	where we now require that $f$ acts on $D(g)$ on the left, and $g$
	acts on $D(f)$ on the right. 
\end{definition}

Now let $\Kab$ denote the $\Kx$-bimodule which is equal to $\K$ as a set, but
where $f \in \Kx$ acts on $c \in \K$ on the left via $f \cdot c = f(\mal)c$,
and on the right via $c \cdot f = f(\mbe)c$. Note that $\Kab \cong \Ka
\otimes_\K \K_{\mbe}$ as $\Kx$-bimodules. It turns out that the $\mal,
\mbe$-character differences on $A$ are precisely the $\K$-linear bimodule
derivations $A \to \Kab$.

\begin{lemma}
	Let $A \subset \Kx$ be a subalgebra and $\mal \not = \mbe \in \K^{n}$ be
	points such that there exists $h \in A$ where $h(\mal) \not = h(\mbe)$. Then
	the $\mal, \mbe$-character differences on $A$ are precisely the $\K$-linear
	bimodule derivations $A \to \Kab$.
\end{lemma}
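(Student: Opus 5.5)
Two inclusions need to be shown. For the easy one, let $E(f) = c(f(\mal) - f(\mbe))$ be an $\mal,\mbe$-character difference restricted to $A$. A direct expansion gives, for $f, g \in A$,
\[
	f\cdot E(g) + E(f)\cdot g = c\big(f(\mal)g(\mal) - f(\mal)g(\mbe) + f(\mal)g(\mbe) - f(\mbe)g(\mbe)\big) = E(fg).
\]
The cross terms cancel, so $E$ is a $\K$-linear bimodule derivation $A \to \Kab$.

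For the converse, let $D : A \to \Kab$ be a bimodule derivation, so that $D(fg) = f(\mal)D(g) + D(f)g(\mbe)$ for all $f, g \in A$. The key trick is to use commutativity of $A$. Writing the Leibniz rule for $D(gf)$ as well and subtracting the two expressions gives
\[
	0 = D(fg) - D(gf) = D(g)\big(f(\mal) - f(\mbe)\big) - D(f)\big(g(\mal) - g(\mbe)\big)
\]
for all $f, g \in A$. Now I take $g = h$, the element guaranteed by the hypothesis, with $h(\mal) - h(\mbe) \neq 0$. Solving the identity yields
\[
	D(f) = c\big(f(\mal) - f(\mbe)\big), \qquad c = \frac{D(h)}{h(\mal) - h(\mbe)},
\]
and $c$ does not depend on $f$. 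So $D$ is the restriction to $A$ of the $\mal,\mbe$-character difference with constant $c$.

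The only delicate point is interpreting "character differences on $A$" as restrictions to $A$ of functionals on $\Kx$; this causes no difficulty, since the formula obtained for $D$ makes sense on all of $\Kx$. I also note where the hypothesis on $h$ enters: without it every character difference vanishes on $A$, while the bimodule derivations would be exactly the $\mal$-derivations, which need not vanish (on $\Kx$ with $\mal = \mbe$, for instance, they are the nonzero derivations of Example 3). I do not expect any step to be a real obstacle; the argument is essentially the commutator computation above.
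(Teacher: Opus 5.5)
Your proof is correct and rests on the same idea as the paper's: you compare $D(fg)$ with $D(gf)$ using the commutativity of $A$. The paper uses this identity to show $\ker(D) \subseteq \ker(L')$ for a nonzero $\mal,\mbe$-character difference $L'$ and then invokes Lemma~\ref{lm-intersect_lin_funcs}. You instead substitute $g = h$ directly and read off $c = D(h)/(h(\mal) - h(\mbe))$, which is slightly more direct and also covers $D = 0$ without the paper's separate nonzero case.
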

\begin{proof}
	First let $L$ be an $\mal, \mbe$-character difference and $f, g \in A$.
	Then $L$ is clearly $\K$-linear, and it is straightforward to verify that
	it satisfies the Leibniz rule $A \to \Kab$:
	\begin{align*}
		L(fg) 
		 & =
		f(\mal)g(\mal) - f(\mbe)g(\mbe) \\
		 & =
		f(\mal)g(\mal) 
		- f(\mal)g(\mbe)
		+
		f(\mal)g(\mbe)
		- f(\mbe)g(\mbe)                \\
		 & =
		f(\mal)L(g) + g(\mbe)L(f).
	\end{align*}
	
	For the other direction, let $L \not = 0$ be a non-zero $\K$-linear
	bimodule derivation $A \to \Kab$. Pick $g \in A$ such that $L(g) = 0$. As
	$L$ is non-zero, we can pick $f \in A$ such that $L(f) \not = 0$. By
	comparing $L(fg)$ and $L(gf)$ we get
	\begin{align*}
		0
		 & =
		L(fg) - L(gf) \\
		 & =
		f(\mal)L(g) 
		+
		g(\mbe)L(f) 
		-
		g(\mal)L(f) 
		-
		f(\mbe)L(g)   \\
		 & =
		g(\mbe)L(f) 
		-
		g(\mal)L(f)   \\
		 & =
		L(f)(g(\bm{\mbe}) - g(\mal)),
	\end{align*}
	and it follows that $g(\mal) = g(\mbe)$. If we let $L' : A \to \K$ be an
	$\mal, \mbe$-character difference, we've shown that $\Ker(L) \subseteq
		\Ker(L')$. By hypothesis there exists $h \in A$ such that $L'(h) \not = 0$,
	so $L'$ is non-zero whence it follows from Lemma
	\ref{lm-intersect_lin_funcs} that $L = cL'$ for some non-zero $c \in \K$
	and we are done. 
\end{proof}

As $\,_{\mal}\Ka = \Ka$, the previous lemma tells us that we may unify the
definitions of $\mal$-derivations and $\mal, \mbe$-character difference simply
as bimodule derivations from $A$ to $\Kab$ where possibly $\mal = \mbe$. It
turns out that character differences and $\mal$-derivations differ in many
significant ways, and in this paper it will benefit us to consider them as
different kinds of functions most of the time. Nevertheless, it is interesting
to keep the above perspective in mind, and it will be used to shorten proofs
and statements where applicable via the following definition. 
\begin{definition}
	We define an $\mal, \mbe$-subalgebra condition on a $\K$-algebra $A$ to be
	a linear functional which satisfies the Leibniz rule $A \to \Kab$, where
	possibly $\mal = \mbe$.
\end{definition}

\subsection{Subalgebras are Described by $\mal, \mbe$-Subalgebra Conditions}

We will now give result upon which this theory rests. A more general version of
the following result is given in \cite{gorin1969subalgebras}.

\begin{theorem}\label{th-gorin}
    Let $A \subsetneq B \subseteq \Kx$ be algebras such that both $A$ and $B$
    have finite codimension in $\Kx$. Then there exist a subalgebra $C
    \subseteq B$ such that $A = C \cap \Ker(L)$, where $L$ is a $\mal,
    \mbe$-subalgebra condition on $C$ for some $\mal, \mbe \in \K^{n}$.
\end{theorem}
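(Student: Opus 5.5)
The plan is to replace $C$ by a subalgebra that is minimal over $A$, reduce everything to a finite-dimensional algebra, and then split into two cases according to whether $C$ separates points of $\K^n$ that $A$ does not separate.

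\textbf{Step 1 (choice of $C$).} Since $B/A$ is finite dimensional, I choose $C$ with $A \subsetneq C \subseteq B$ minimal among subalgebras strictly containing $A$. The key consequence is this: if $L$ is any nonzero linear functional on $C$ with $A \subseteq \Ker(L)$ and $\Ker(L)$ a subalgebra, then $\Ker(L)=A$. Indeed $\Ker(L)$ is a subalgebra with $A \subseteq \Ker(L) \subsetneq C$, so minimality forces $\Ker(L)=A$. So it suffices to exhibit a nonzero $\mal,\mbe$-subalgebra condition on $C$ that vanishes on $A$.

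\textbf{Step 2 (finite-dimensional reduction).} Let $I=\{f\in\Kx : f\Kx\subseteq A\}$ be the conductor of $A$. It is an ideal of $\Kx$ contained in $A$. It has finite codimension: it is the kernel of the action map $\Kx \to \operatorname{End}_\K(\Kx/A)$, twisted by the evaluation at $1$, and $\Kx/A$ is finite dimensional. Hence $V(I)=\{\mal_1,\ldots,\mal_r\}$ is finite, and $R=\Kx/I$ is artinian. Let $J=\bigcap_i \mathfrak{m}_{\mal_i}(C)$. The image of $J$ in $R$ lies in the radical of $R$, so it is nilpotent, and therefore $J^N\subseteq I\subseteq A$ for some $N$.

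\textbf{Case (a): $C$ separates points that $A$ does not.} Suppose there are $\mal\neq\mbe$ in $V(I)$ with $f(\mal)=f(\mbe)$ for all $f\in A$, but $g(\mal)\neq g(\mbe)$ for some $g\in C$. Then the $\mal,\mbe$-character difference $E$ is nonzero on $C$, vanishes on $A$, and has a subalgebra as kernel. By the previous lemma, $E$ is a bimodule derivation $C\to\Kab$, so Step 1 gives $A=C\cap\Ker(E)$.

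\textbf{Case (b): $A$ and $C$ identify exactly the same points of $V(I)$.} Then $C/J \cong \K^s \cong A/(A\cap J)$ via evaluation, so $C=A+J$. Each $A+J^k$ is a subalgebra containing $A$; it equals $C$ for $k=1$ and equals $A$ for $k=N$. By minimality there is $k\ge1$ with $C=A+J^k$ and $J^{k+1}\subseteq A$.

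Next I lift the primitive idempotents of $A/(A\cap J)$ to elements $e_1,\ldots,e_s\in A$; this is possible because $A\cap J$ is nilpotent modulo $I\subseteq A$. For $a\in A$ and $j\in J^k$ we have $aj\equiv \sum_i a(\mal_i)e_ij \pmod{J^{k+1}}$. Moreover products of two elements of $J^k$ lie in $J^{2k}\subseteq A$. Hence any subspace between $A$ and $C$ that is stable under multiplication by the $e_i$ is a subalgebra. By minimality, $C/A$ is then a simple module over $\K^s$, so it is one-dimensional, and exactly one idempotent (belonging to the class of some point $\mal$) acts on it as $1$ while the others act as $0$.

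Let $L$ be a functional with $\Ker(L)=A$. The identity $L(aj)=a(\mal)L(j)$ together with $L(J^kJ^k)=0$ then yields $L(fg)=f(\mal)L(g)+g(\mal)L(f)$ on $C=A+J^k$. So $L$ is an $\mal$-derivation, i.e.\ an $\mal,\mal$-subalgebra condition, and $A=C\cap\Ker(L)$.

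\textbf{Main obstacle.} I expect Case (b) to be the hardest part. It requires lifting idempotents correctly and carrying out the bookkeeping that shows $C/A$ is one-dimensional with $A$ acting through a single point $\mal$. Without this, the $A$-action could mix several points and the resulting functional would fail to satisfy the Leibniz rule for a single $\mal$.
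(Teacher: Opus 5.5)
Your proof is correct, and it takes a genuinely different route from the paper, which gives no argument for Theorem~\ref{th-gorin} and defers to a more general result of Gorin \cite{gorin1969subalgebras}.

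\textbf{Comparison.} Your argument is a self-contained replacement. You pass to a minimal overalgebra $C$. You use the conductor $I\subseteq A$ and the Nullstellensatz to get the chain $C=A+J\supseteq A+J^2\supseteq\cdots\supseteq A+J^N=A$. You then split into two cases. If $C$ separates two points of $V(I)$ that $A$ glues, you get a character difference. Otherwise $C/A$ is a one-dimensional module over $A/(A\cap J)\cong\K^s$, so $A$ acts on it through a single point $\mal$, and any functional with kernel $A$ is an $\mal$-derivation. The citation buys brevity and generality. Your proof buys three things:
\begin{itemize}
\item It is elementary.
\item It proves the intended non-degenerate form of the statement, $A\subsetneq C$ with $\dim C/A=1$ and $L\neq 0$. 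This is what the paper's induction on codimension actually needs; read literally, the printed statement is satisfied by $C=A$, $L=0$.
\item It produces along the way an ideal of $\Kx$ of finite codimension contained in $A$ (the conductor), with the relevant points $\mal,\mbe$ lying in its zero set. The paper obtains such an ideal only later, in the sharper form of Lemma~\ref{lm-ideal_subalg_in_A}, and via Theorem~\ref{th-gorin} itself.
\end{itemize}

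\textbf{Two small repairs.}
\begin{itemize}
\item Your reason that $I$ has finite codimension does not parse, since $\Kx/A$ is not a $\Kx$-module. Instead, observe that $I$ is exactly the kernel of the well-defined map $A\to\mathrm{End}_{\K}(\Kx/A)$, $a\mapsto(\bar g\mapsto\overline{ag})$. Hence $\codim I\le\codim A+(\codim A)^2$.
\item You cannot lift to genuine idempotents in $A$, since $A\subseteq\Kx$ is a domain. You do not need to: any $e_i\in A$ taking the indicator values on the classes of $V(I)$ will do. The inclusion $(A\cap J)J^k\subseteq J^{k+1}\subseteq A$ shows that $A\cap J$ annihilates $C/A$. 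So $A$ acts on $C/A$ through $A/(A\cap J)\cong\K^s$, and the chosen $e_i$ act there as orthogonal idempotents.
\end{itemize}
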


By induction on codimension, it follows that any subalgebra $A \subseteq \Kx$
of finite codimension is the kernel of some finite set of $\mal,
\mbe$-subalgebra conditions, and that given some intermediate subalgebra $A
\subseteq B \subseteq \Kx$, we can choose subalgebra conditions in such a way
that the induced filtration includes $B$. \\ 

When combined with Lemma \ref{lm-intersect_lin_funcs}, we also obtain the
following corollary stating that all subalgebra conditions are $\mal,
\mbe$-subalgebra conditions. This result may also be found in
\cite{Rennison_1970}, where it is obtained via more elementary methods.

\begin{corollary}
    Let $A \subset \Kx$ be a subalgebra of finite codimension, and $L : A \to
    \K$ be a subalgebra condition. Then $L$ is an $\mal, \mbe$-subalgebra
    condition for some $\mal, \mbe \in \K^{n}$.
\end{corollary}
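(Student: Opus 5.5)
Assume $L \neq 0$, since otherwise $L$ is trivially an $\mal,\mbe$-subalgebra condition. Set $A' = \Ker(L)$; this is a subalgebra of $A$ with codimension one in $A$, so it also has finite codimension in $\Kx$. Then $A' \subsetneq A \subseteq \Kx$ with both of finite codimension, and Theorem \ref{th-gorin} applied with $B = A$ gives a subalgebra $C \subseteq A$ and an $\mal,\mbe$-subalgebra condition $L'$ on $C$ with $A' = C \cap \Ker(L')$.

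The key step is to show $C = A$. From $A' \subseteq C \subseteq A$ and $\codim_A A' = 1$, either $C = A'$ or $C = A$. If $C = A'$, then $A' = A' \cap \Ker(L')$, so $L'$ vanishes on all of $C$. In that case we could not have $A' = C\cap\Ker(L') \subsetneq A$ being witnessed by $L'$ in any nontrivial way. I would handle this by noting that the filtration produced by Theorem \ref{th-gorin} is meant to have $L'$ nonzero on $C$ with $C \cap \Ker(L') = A' \subsetneq C$. If the statement as written permits $C = A'$, I would instead run the induction remark after Theorem \ref{th-gorin}: choose conditions whose filtration from $A'$ up to $\Kx$ passes through $B = A$. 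By codimension count, the first step $A' \subset A$ is then a single step, realised as $A' = A \cap \Ker(L')$ with $L'$ an $\mal,\mbe$-subalgebra condition on $A$. This is the step I expect to need the most care: making sure the condition produced by Theorem \ref{th-gorin} really lives on $A$ itself and not on some intermediate algebra.

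With $L'$ an $\mal,\mbe$-subalgebra condition on $A$ and $\Ker(L') = A' = \Ker(L)$, apply Lemma \ref{lm-intersect_lin_funcs} with $V = A$, $m = 1$ and $L_1 = L'$. Since $L$ vanishes on $\Ker(L')$, we get $L = cL'$ for some $c \in \K$, and $c \neq 0$ because $L \neq 0$.

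Finally, a nonzero scalar multiple of an $\mal,\mbe$-subalgebra condition is again one, because the Leibniz rule $D(fg) = f(\mal)D(g) + D(f)g(\mbe)$ is linear in $D$. Hence $L$ is an $\mal,\mbe$-subalgebra condition, as claimed.
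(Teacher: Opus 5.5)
Your proof is correct and follows the paper's argument: apply Theorem~\ref{th-gorin} to $\Ker(L) \subsetneq A$, use that $\Ker(L)$ has codimension one in $A$ to force the intermediate algebra $C$ to be $A$, and conclude $L = cL'$ via Lemma~\ref{lm-intersect_lin_funcs}. Your worry about the case $C = \Ker(L)$ is settled by your first suggestion, namely reading Theorem~\ref{th-gorin} with $L'$ nonzero on $C$, which the paper also assumes implicitly. Your remarks on the case $L = 0$ and on closure under scalar multiples supply details the paper leaves unstated.
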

\begin{proof}
    By Theorem \ref{th-gorin}, there is a $\mal, \mbe$-subalgebra condition $L'
    : A \to \K$ such that $\ker(L') = \ker(L)$, after which Lemma
    \ref{lm-intersect_lin_funcs} yields $L = cL'$ for some scalar $c \in \K$. 
    
\end{proof}

\subsection{The Connection Between $\mal$-Derivation Spaces and Cotangent Spaces}

Let $A$ be a $\K$-algebra. It is well known that the space of derivations $A
\to \Ka$ is naturally isomorphic the dual space
$\left(\mathfrak{m}_{\mal}(A)/\mathfrak{m}_{\mal}(A)^2\right)^{*}$. 

\begin{theorem}\label{th-dim_D}
    Let $A \subset \Kx$ be a subalgebra and $\mal \in \K^{n}$. Then
    $\mathcal{D}_{\mal}(A)$ is isomorphic to the dual space
    $\left(\mathfrak{m}_{\mal}(A)/\mathfrak{m}_{\mal}^{2}(A)\right)^{*}$ as
    $A$-modules, and in particular as $\K$-spaces.
\end{theorem}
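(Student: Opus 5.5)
We will construct the isomorphism explicitly by restriction. Write $\mathfrak{m} = \mathfrak{m}_{\mal}(A)$. Since $A$ contains $\K$, every $f \in A$ decomposes uniquely as $f = f(\mal) + (f - f(\mal))$ with $f - f(\mal) \in \mathfrak{m}$, so $A = \K \oplus \mathfrak{m}$ as $\K$-spaces. We define
\[
	\Phi : \mathcal{D}_{\mal}(A) \to \left(\mathfrak{m}/\mathfrak{m}^{2}\right)^{*},
	\qquad
	\Phi(D)(f + \mathfrak{m}^{2}) = D(f).
\]

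The first step is to check that $\Phi$ is well defined, which amounts to showing that $D$ vanishes on $\mathfrak{m}^2$. An element of $\mathfrak{m}^2$ is a finite sum of products $gh$ with $g,h \in \mathfrak{m}$, and the Leibniz rule gives $D(gh) = g(\mal)D(h) + h(\mal)D(g) = 0$. Linearity of $\Phi$ over $\K$ is clear. The Leibniz rule also gives $D(1) = 2D(1)$, so $D(1)=0$ and $D$ kills $\K$. Hence $D$ is determined by its restriction to $\mathfrak{m}$, and $\Phi$ is injective.

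For surjectivity, take $\varphi \in (\mathfrak{m}/\mathfrak{m}^2)^*$ and define $D(f) = \varphi\big((f - f(\mal)) + \mathfrak{m}^2\big)$. This is $\K$-linear. To verify the Leibniz rule, write $f = a + f_0$ and $g = b + g_0$ with $a = f(\mal)$, $b = g(\mal)$ and $f_0, g_0 \in \mathfrak{m}$. Then
\[
	fg - ab = a g_0 + b f_0 + f_0 g_0,
\]
and since $f_0 g_0 \in \mathfrak{m}^2$ we get $D(fg) = aD(g) + bD(f)$. Moreover $\Phi(D) = \varphi$, so $\Phi$ is a bijection of $\K$-spaces.

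The remaining point, and the only one needing care, is the $A$-module structure. On $\mathcal{D}_{\mal}(A)$, $h \in A$ acts by $(h\cdot D)(g) = h(\mal)D(g)$, since the action on $\Ka$ is evaluation at $\mal$. On $\mathfrak{m}/\mathfrak{m}^2$, $A$ acts by multiplication, and this action factors through $A/\mathfrak{m} \cong \K$ because $\mathfrak{m}\cdot\mathfrak{m} \subseteq \mathfrak{m}^2$. Hence $h$ acts on $\mathfrak{m}/\mathfrak{m}^2$ as multiplication by $h(\mal)$. The dual carries the induced action $(h\varphi)(v) = \varphi(hv) = h(\mal)\varphi(v)$. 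So on both sides $h$ acts as the scalar $h(\mal)$. Since $\Phi$ is $\K$-linear, it commutes with these actions and is therefore an isomorphism of $A$-modules, and in particular of $\K$-spaces.
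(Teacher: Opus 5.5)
Your proof is correct, and it takes a different route from the paper, which gives no argument of its own. The paper cites Shafarevich, where the result comes from identifying derivations at a point of a variety with its Zariski tangent space, and that tangent space with the dual of $\mathfrak{m}_x/\mathfrak{m}_x^2$.

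You instead prove the identification directly:
\begin{itemize}
\item the map is restriction of $D$ to $\mathfrak{m}_{\mal}(A)$, which is well defined because $D$ kills $\K$ and $\mathfrak{m}_{\mal}(A)^2$;
\item the inverse sends $\varphi$ to $f \mapsto \varphi\bigl((f - f(\mal)) + \mathfrak{m}_{\mal}(A)^2\bigr)$, which satisfies Leibniz because $f_0 g_0 \in \mathfrak{m}_{\mal}(A)^2$.
\end{itemize}

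Your route is self-contained and uses nothing beyond $\K \subseteq A$: no finite generation, algebraic closedness or characteristic assumption. So it proves the statement for an arbitrary subalgebra exactly as stated. The cited results are phrased for varieties, so they literally cover only finitely generated $A$. That is enough for the paper's purposes, since finite codimension gives finite generation by Lemma~\ref{lm-finit_SAGBI}.

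You also justify the $A$-module part of the claim, which the cited vector-space statements do not address. Your observation is the right reason it is automatic: both modules are annihilated by $\mathfrak{m}_{\mal}(A)$, so $h \in A$ acts on each as the scalar $h(\mal)$, and any $\K$-linear bijection is then $A$-linear.

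What the citation buys in return is brevity and the geometric reading of $\mathcal{D}_{\mal}(A)$ as a tangent space.

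You also read the notation $\mathfrak{m}^{2}_{\mal}(A)$ correctly, as $\mathfrak{m}_{\mal}(A)^2$, the square of the ideal inside $A$. This matches the paper's later usage. The reading $\mathfrak{m}_{\mal}^2 \cap A$ would make the statement false: for $A = \K[x^2, x^3]$ at $0$, that quotient is zero while $\mathcal{D}_{0}(A)$ is two-dimensional.
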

\begin{proof} 
	See \cite{shafarevich2013basic}, Corollary 2.1 of Chapter 2.1.3,
	along with Exercise 24 of Chapter 2.1.6.
    
\end{proof}

This is a great tool that we will rely heavily upon throughout the paper. As a
first example of its utility, an immediate corollary gives us a bound for
$\dim\left(\mathcal{D}_{\mal}(A)\right)$ by the size of any generating set for
$A$.

\begin{corollary}\label{cor-deriv_bound}
    Let $A \subset \Kx$ be a subalgebra of finite codimension and $G = \{g_{1},
    \ldots, g_m\} \subset A$ be a finite generating set of $A$. Then
    \[
		\dim(\mathcal{D}_{\mal}(A)) \leq m.
	\]
\end{corollary}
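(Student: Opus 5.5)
The plan is to prove this directly from the Leibniz rule, and then to note the equivalent argument through Theorem \ref{th-dim_D}. The key observation is that an $\mal$-derivation $D$ on $A$ is determined by its values on the generators $g_1, \ldots, g_m$. Consequently, the linear map
\[
	\Phi : \mathcal{D}_{\mal}(A) \to \K^{m}, \qquad D \mapsto \left(D(g_1), \ldots, D(g_m)\right),
\]
is injective, and $\dim(\mathcal{D}_{\mal}(A)) \leq m$ follows immediately.

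To prove that $\Phi$ is injective, suppose $D(g_i) = 0$ for all $i$; we show $D = 0$. Every $f \in A$ is a $\K$-linear combination of finite products $g_{i_1} g_{i_2} \cdots g_{i_k}$, with the empty product $1$ included. Since $D$ is $\K$-linear, it suffices to show that $D$ kills each such monomial.

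\begin{itemize}[label={}]
	\item \emph{Empty product.} We have $D(1) = D(1 \cdot 1) = 2D(1)$, so $D(1) = 0$. Here we use that $\K$ has characteristic $0$; in fact $D(1)=D(1)+D(1)$ gives $D(1)=0$ in any characteristic.
	\item \emph{Longer products.} By induction on $k$, the Leibniz rule gives
	\[
		D(g_{i_1} \cdot h) = g_{i_1}(\mal) D(h) + h(\mal) D(g_{i_1}),
	\]
	where $h = g_{i_2} \cdots g_{i_k}$. Both terms vanish, by the inductive hypothesis and by the assumption $D(g_{i_1}) = 0$.
\end{itemize}

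Hence $D = 0$ on all of $A$, so $\Phi$ is injective.

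Alternatively, via Theorem \ref{th-dim_D}, one shows that the images of $g_i - g_i(\mal)$ span $\mathfrak{m}_{\mal}(A)/\mathfrak{m}_{\mal}^2(A)$. To see this, expand any $f \in \mathfrak{m}_{\mal}(A)$ as a polynomial in the elements $g_i - g_i(\mal)$. Its constant term is $f(\mal) = 0$, and every term of degree at least $2$ lies in $\mathfrak{m}_{\mal}(A)^2$. The dimension bound on the dual space then follows.

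Neither route has a real obstacle. The only point needing care is that products of generators span $A$ as a vector space, which is exactly what "generating set" means for a unital subalgebra. Finite codimension is not actually needed.
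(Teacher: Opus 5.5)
Your proposal is correct. Your main argument takes a different route from the paper's, while your ``alternative'' paragraph is essentially the paper's proof.

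The paper works only on the cotangent side. After replacing each $g_i$ by $g_i - g_i(\mal)$, every $h \in \mathfrak{m}_{\mal}(A)$ is a polynomial in the generators with no constant term. It is therefore congruent modulo $\mathfrak{m}_{\mal}(A)^2$ to a linear combination of the $g_i$, and the bound is transported to $\mathcal{D}_{\mal}(A)$ through the duality of Theorem \ref{th-dim_D}.

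You instead argue directly with derivations. From $D(1)=0$ and the Leibniz rule, an $\mal$-derivation is determined by $(D(g_1),\ldots,D(g_m))$, so $\Phi$ is injective. The two arguments are transposes of each other: since $D$ kills constants, $\Phi$ is the dual of the map $\K^m \to \mathfrak{m}_{\mal}(A)/\mathfrak{m}_{\mal}(A)^2$, $e_i \mapsto \overline{g_i - g_i(\mal)}$, whose surjectivity is what the paper proves.

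What each buys:
\begin{itemize}
\item Your version is self-contained and does not rely on the result cited from Shafarevich. It also works verbatim for derivations of any finitely generated algebra into any module.
\item The paper's version bounds $\dim\bigl(\mathfrak{m}_{\mal}(A)/\mathfrak{m}_{\mal}(A)^2\bigr)$ directly. That is the quantity the paper manipulates later, for example in Lemma \ref{lm-look_at_M2}.
\end{itemize}

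Two small remarks. Your aside about characteristic $0$ is unnecessary, since $D(1) = 2D(1)$ forces $D(1)=0$ in any characteristic, as you note yourself. And you are right that finite codimension plays no role once a finite generating set is given.
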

\begin{proof}
    By subtracting constants from the elements of $G$, we may suppose that $G
    \subset \mathfrak{m}_{\mal}(A)$. Any element of $h \in
    \mathfrak{m}_{\mal}(A)$ can be written as a polynomial $F(g_{1}, \ldots,
    g_m)$ in the elements of $G$, and as $h(\mal) = g_i(\mal) = 0$, it follows
    that $F$ doesn't have a constant term. Hence $h = F(g_{1}, \ldots, g_m)$ is
    congruent to a linear combination of $g_{1}, \ldots, g_m$ modulo
    $\mathfrak{m}_{\mal}(A)^{2}$ and the corollary follows.
    
\end{proof}

Using Lemma \ref{lm-finit_SAGBI}, we easily obtain the following corollary.
\begin{corollary}
    Let $A \subset \Kx$ be a subalgebra of finite codimension. Then
    $\dim(\mathcal{D}_{\mal}(A))$ is finite.
\end{corollary}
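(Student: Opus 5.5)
The plan is to combine Lemma \ref{lm-finit_SAGBI} with Corollary \ref{cor-deriv_bound}. Since $A$ has finite codimension in $\Kx$, Lemma \ref{lm-finit_SAGBI} gives a finite SAGBI basis $G = \{g_1, \ldots, g_m\} \subset A$. I then need to check that a SAGBI basis generates $A$ as a $\K$-algebra; once that holds, Corollary \ref{cor-deriv_bound} yields $\dim(\mathcal{D}_{\mal}(A)) \leq m < \infty$.

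To show that $G$ generates $A$, take $f \in A$ and run the subduction algorithm described in Section \ref{sagbi}. At each stage the current remainder $f_k$ lies in $A$, since $f \in A$ and each subtracted $a_k h_k$ is a product of elements of $G \subset A$. Hence $\lm(f_k) \in \Lm(A) = \Lm(G)_{\text{mon}}$, so the algorithm never stops at a nonzero remainder whose leading monomial lies outside $\Lm(G)_{\text{mon}}$.

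The one point needing care is termination, since the leading monomials strictly decrease. I expect this to be the main obstacle, though a mild one. Under a term ordering, which is a well-ordering on monomials, a strictly decreasing sequence of leading monomials is finite. So the process ends with $f_M = 0$, and $f = \sum_k a_k h_k$ lies in $\K[G]$.

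Alternatively, I could bypass termination by using the equivalence already stated in the text: when $G$ is a SAGBI basis, $f$ subduces to $0$ if and only if $f \in A$. I would cite this directly. With $A = \K[g_1,\ldots,g_m]$ established, applying Corollary \ref{cor-deriv_bound} completes the argument.
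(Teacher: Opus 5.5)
Your proposal is correct and follows the paper's route exactly: a finite SAGBI basis from Lemma \ref{lm-finit_SAGBI} generates $A$ as a $\K$-algebra, and Corollary \ref{cor-deriv_bound} then bounds $\dim(\mathcal{D}_{\mal}(A))$ by its size. Your subduction argument, which terminates because a term order is a well-order, supplies the step that a SAGBI basis generates $A$; the paper leaves this step implicit.
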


\subsection{Subalgebra Conditions and SAGBI Bases}

If we have a finite minimal SAGBI basis $G$ for a subalgebra $A \subset \Kx$,
and a subalgebra condition $L$ over $A$, then we can construct a finite SAGBI
basis $G'$ of $A \cap \Ker(L)$ as described in the following theorem.

\begin{theorem}\label{th-build_sagbi}
    Given some term order, let $G = \{g_i : i \in [1..m]\}$ be an ordered
    minimal SAGBI basis for $A$, and let $A' = A \cap \Ker(L)$ where
    $L$ is a subalgebra condition over $A$. Let $j$ be the smallest index such
    that $L(g_j) \not = 0$. Then a (not necessarily minimal) SAGBI basis for
    $A'$ is given by 
	\begin{align*}
		G'
		= & 
		\left\{g_i - \frac{L(g_i)}{L(g_j)} g_j : i \not = j
		\right\} \cup                                              \\
		  & \left\{g_ig_j - \frac{L(g_ig_j)}{L(g_j)} g_j : i \in
		[1 .. m]\right\} \cup                                      \\
		  & \left\{g_j^3 - \frac{L(g_j^3)}{L(g_j)} g_j  \right\}.
	\end{align*}
\end{theorem}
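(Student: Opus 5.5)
We have to prove that $G'$ is a SAGBI basis of $A'$, that is, $\Lm(G')_{\text{mon}} = \Lm(A')$. First, each element of $G'$ lies in $A'$. Each element has the form $h - \frac{L(h)}{L(g_j)} g_j$ with $h \in A$. Applying $L$ gives $L(h) - L(h) = 0$, so each element lies in $\Ker(L) \cap A = A'$. Since $A'$ is a subalgebra, $\Lm(G')_{\text{mon}} \subseteq \Lm(A')$. The work is the reverse inclusion.

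Second, I would identify the leading monomials of $G'$. The basis is ordered by increasing leading monomials, with $\lm(g_1) < \lm(g_2) < \cdots$; I would fix this ordering convention, which I believe is the intended one. Since $j$ is the smallest index with $L(g_j) \neq 0$, we get $g_i - \frac{L(g_i)}{L(g_j)}g_j = g_i$ for $i < j$. For $i > j$, the leading monomial is $\lm(g_i)$. Minimality of $G$ gives $\lm(g_i) \neq \lm(g_j)$, so no cancellation of leading terms occurs. The product $g_ig_j$ has leading monomial $\lm(g_i)\lm(g_j)$, which is strictly larger than $\lm(g_j)$ unless $g_i$ is constant. Constants are excluded from a minimal SAGBI basis, except that $1$ might be included; if $1 \in G$ then $L(1) = 0$ because $1 = 1\cdot 1$, and it causes no problem. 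So the leading monomial is $\lm(g_ig_j)$, and likewise $\lm(g_j^3 - \cdots) = \lm(g_j)^3$. Hence
\[
\Lm(G') \supseteq \{\lm(g_i) : i \neq j\} \cup \{\lm(g_i)\lm(g_j) : i \in [1..m]\} \cup \{\lm(g_j)^3\}.
\]
In particular $\lm(g_j)^2$ and $\lm(g_j)^3$ are both in $\Lm(G')_{\text{mon}}$. Every monomial $\prod \lm(g_i)^{b_i}$ in which either $b_j = 0$, or $b_j \geq 1$ and some $b_i \geq 1$ with $i \neq j$, also lies in $\Lm(G')_{\text{mon}}$: pair one factor $\lm(g_j)$ with some $\lm(g_i)$, and the remaining powers of $\lm(g_j)$ are covered because every exponent $\geq 2$ is a sum of $2$'s and $3$'s. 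A single remaining factor of $\lm(g_j)$ is absorbed into a pair $\lm(g_i)\lm(g_j)$. So $\Lm(G')_{\text{mon}}$ contains every element of $\Lm(A)$ that can be written as a product of the $\lm(g_i)$ other than exactly $\lm(g_j)$ itself.

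Third, the main obstacle: show that $\lm(f) \in \Lm(G')_{\text{mon}}$ for every $0 \ne f \in A'$. By the previous step, it suffices to show $\lm(f)$ is not just the bare monomial $\lm(g_j)$, or more precisely that some representation of $\lm(f)$ avoids that case. The subtlety is that a monomial might factor in several ways, but any factorisation other than the bare $\lm(g_j)$ suffices. So suppose $\lm(f) = \lm(g_j)$ and that this monomial has no other factorisation in $\Lm(G)_{\text{mon}}$. Then $f - c g_j$ has smaller leading monomial for a suitable scalar $c \ne 0$. I would argue by well-ordered induction on $\lm(f)$, taking a counterexample $f \in A'$ with minimal leading monomial. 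Subduce $f$ over $G$ so that every step except the first uses products of elements of $G'$, or elements of $G$ whose $L$-value is corrected. Concretely, write $f = c g_j + r$ with $\lm(r) < \lm(g_j)$ and $r \in A$. All monomials below $\lm(g_j)$ that lie in $\Lm(A)$ are products of $\lm(g_i)$ with $i < j$. Monomials strictly smaller than $\lm(g_j)$ cannot involve $\lm(g_j)$ as a factor: products of non-constant monomials under a term order are at least as large as each factor. So subducing $r$ uses only $g_i$ with $i < j$, all of which satisfy $L(g_i) = 0$. These generate a subalgebra contained in $\Ker(L)$, hence $L(r) = 0$. Then $0 = L(f) = cL(g_j)$, forcing $c = 0$, a contradiction. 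This argument needs care to check that subduction of $r$ genuinely terminates inside the subalgebra generated by $\{g_i : i<j\}$; that is the step I would verify most carefully, possibly by an induction on the term order, since it is well ordered.
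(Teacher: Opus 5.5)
Your proposal is correct and follows essentially the paper's argument: $G'\subseteq A'$ because $L\bigl(h-\frac{L(h)}{L(g_j)}g_j\bigr)=0$, the key fact $\lm(g_j)\notin\Lm(A')$ because $f-cg_j$ subduces using only $g_i$ with $i<j$ (all lying in the subalgebra $\Ker(L)$), and the same $2a+3b$ and pairing bookkeeping for powers of $\lm(g_j)$. The only cosmetic differences are that you use just the inclusion $\Lm(A')\subseteq\Lm(A)\setminus\{\lm(g_j)\}$, whereas the paper states the equality (via codimension one) but also only needs the inclusion, and that your well-ordered-induction framing is superfluous since the direct subduction argument already settles that case.
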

\begin{proof}
    First of all, for any $f \in A$ we have $L\left(f -
    \frac{L(f)}{L(g_j)}g_j\right) = 0$, so $G' \subset A'$. \\
	
    It remains to show that $\deg\left(G'\right)$ generates
    $\deg\left(A'\right)$ as a semigroup. First we show that
    $\deg\left(A'\right) = \deg\left(A\right) \setminus \{\deg(g_j)\}$. As $A'$
    has codimension $1$ in $A$, it will suffice to show that $\deg(g_j) \not
    \in \deg\left(A'\right)$. Suppose that $f \in A$ is such that $\deg(f) =
    \deg(g_j)$. After multiplying by a scalar, we can suppose that $f$ has the
    same leading coefficient as $g_j$ whence $\deg(g_j - f) < \deg(g_j)$. Then
    $g_j - f$ can be subduced by elements $g_i \in G$ such that $g_i < g_j$. By
    assumption, all such elements are annihilated by $L$, and thus $L(g_j - f)
    = 0$. It follows that $L(f) = L(g_j) \not = 0$, and $f \not \in A'$. \\	
	
	Now let $\bm{d} \in \deg\left(A'\right) = \deg(A) \setminus
		\{\deg(g_j)\}$. Then as $G$ is a SAGBI basis for $A$ and $A'
		\subset A$, we can write 
	\[
		\bm{d} = \sum_{i = 1}^{m} s_i \deg(g_i)
	\]
	for $s_i \in \N$. Suppose first that $s_j \not = 1$. Then we can write $s_j
		= 2a + 3b$ for $a,b \in \N$, and the sum above may be rewritten as
	\begin{align*}
		\bm{d}
		 & = 
		a \deg\left(g_j^2\right)
		+
		b \deg\left(g_j^3\right)
		+
		\sum_{\substack{i = 1 \\ i \not = j}}^{m} s_i \deg(g_i).
	\end{align*}
	
    Now, if $i < j$ then $L(g_i) = 0$ by assumption, and whenever $i > j$ we
    have a strict inequality $\lm(g_i) > \lm(g_j)$ by the fact that $G$ is
    minimal and ordered. Either way
	\[
		\lm\left(
		g_i - \frac{L(g_i)}{L(g_j)}g_j
		\right)
		=
		\lm(g_i)
	\]
	when $i \not = j$. It is also easy to see that

    \[
        \lm\left(g_ig_j - \frac{L(g_ig_j)}{L(g_j)} g_j\right)
        =
        \lm\left(g_i g_j\right)
    \]
    for all $i \in [1..m]$ and that
    \[
        \lm\left(g_j^3 - \frac{L(g_j^3)}{L(g_j)} g_j\right) = \lm(g_j^{3}).
    \]
    Thus we may write $\bm{d}$ as a natural number combination of elements in
    $\deg\left(G'\right)$ according to
	\begin{align*}
		\bm{d}
		 & = 
		a \deg\left(g_j^2 - \frac{L(g_j^2)}{L(g_j)} g_j\right)
		+
		b \deg\left(g_j^3 - \frac{L(g_j^3)}{L(g_j)} g_j\right)
		+
		\sum_{\substack{i = 1 \\ i \not = j}}^{m} s_i \deg\left(g_i - \frac{L(g_i)}{L(g_j)} g_j\right).
	\end{align*}
	If instead $s_j = 1$, since $\deg(g_j) \not \in \deg\left(A'\right)$, it must
	be the case that $s_r > 0$ for some $r \not = j$. Like above, we can then
	write $\bm{d}$ as a natural number combination of elements in
	$\deg\left(G'\right)$ as 
	\begin{align*}
		\bm{d}
		 & = 
		\deg\left(g_j g_r\right)
		+
		(s_r - 1)\deg\left(g_r\right)
		+
		\sum_{\substack{i = 1 \\ i \not = r, j}}^{m} s_i \deg(g_i) \\
		 & = 
		\deg\left(g_j g_r - \frac{L(g_jg_r)}{L(g_j)}g_j\right)
		+
		(s_r - 1)\deg\left(g_r - \frac{L(g_r)}{L(g_j)}g_j\right)
		+
		\sum_{\substack{i = 1 \\ i \not = r, j}}^{m} s_i \deg\left(g_i - \frac{L(g_i)}{L(g_j)}
		g_j\right).
	\end{align*}
	We have shown that $\deg\left(G'\right)$ generates
	$\deg\left(A'\right)$ as a semigroup, and we are done.
	
\end{proof}

The above theorem may be used to obtain an algorithm which produces a minimal
SAGBI basis for a subalgebra of finite codimension in $\Kx$ which is described
by conditions: First of all, given a finite SAGBI basis $G$ for a subalgebra of
$\Kx$, one can use subduction to determine superfluous elements of $G$ and
obtain a minimal SAGBI basis. Thus given a finite set of subalgebra conditions
$\mathcal{L}$ and a labeling of the conditions $L_i \in \mathcal{L}$ such that
$L_0$ is a subalgebra condition on $A_{0} = \Kx$, and $L_i$ is a subalgebra
condition on $A_i = \ker(L_{i - 1})$, we may obtain a minimal SAGBI basis for
$A = \ker \mathcal{L}$ by iterating through the subalgebra conditions $L_i$ and
using the above theorem along with subduction to obtain minimal SAGBI bases for
each algebra $A_i$ along the filtration. 

\section{Subalgebra Spectrum}

In \cite{gronkvist2022subalgebras}, it was shown that the subalgebra conditions
which define a subalgebra $A$ with finite codimension in the univariate
polynomial algebra $\K[x]$, give rise to a distinguished finite set of points
$\Sp(A) \subset \K$, referred to as the subalgebra spectrum. We now generalize
this notion to the multivariate setting, and obtain many similar results. \\

We need some notation before the definition. When $\bm{u} = (u_1, \ldots, u_n)
	\in \K^n \setminus \{\bm{0}\}$, let $f'_{\bm{u}}$ denote the directional
derivative
\[
	f'_{\bm{u}}
	=
	\sum_{i = 1}^{n} u_i f'_{x_i}.
\]

\begin{definition}
	Let $A \subset \Kx$ be a subalgebra of finite codimension. Then we define
	the subalgebra spectrum of $A$, written $\Sp(A) \subset \K^n$, as the set
    of points $\mal \in \K^{n}$ such that either $f'_{\bm{u}}(\mal) = 0$ for
    all $f \in A$ and some $\bm{u} \in \K^n \setminus \{\bm{0}\}$, or there
    exists some $\mbe \not = \mal$ such that $f(\mal) = f(\mbe)$ for all $f \in
    A$.
\end{definition}

We start by revisiting an example from the introduction.

\begin{example}
	Let 
	\begin{align*}
		A
		=
		\{
		f \in \K[x_1, x_2, x_3]
		:
		 & f'_{x_3}(1, 0, -1) = 0,
		f(3, 2, 5) = f(1, -3, 2),   \\
		 & f'_{x_1}(3,2,5)
		-
		3f'_{x_2}(1, -3, 2) = 0
		\}.
	\end{align*}
    Then $\Sp(A) = \{(1, 0, -1), (3, 2, 5), (1, -3, 2)\}$. In this example, the
    subalgebra spectrum of $A$ consists of all points of evaluation which
    appear in the conditions which define $A$, and as will be shown in the
    upcoming sections, this is the case in general as well.
\end{example}

We will now show that the subalgebra spectrum of a proper subalgebra is
non-empty, but to do this, we first need to classify all derivations on
$\Kx$. A generating set for $\Kx$ inside $\mathfrak{m}_{\mal}$ is given by 
\[
	G = \{x_i - \alpha_i : i \in [1..n]\},
\]
hence $\dim(\mathcal{D}_{\mal}(\Kx)) \leq n$ by Corollary
\ref{cor-deriv_bound}. Now, for each $i \in [1..n]$, we have that $D_i : f
\mapsto f'_{x_i} (\mal)$ is an $\mal$-derivation, and as the $D_i$ are linearly
independent, they span $\mathcal{D}_{\mal}(\Kx)$. We summarize this result in a
lemma.

\begin{lemma}\label{lm-Kx_derivations}
	The $\mal$-derivation space $\mathcal{D}_{\mal}$ of
	$\Kx$ is spanned by the basis $D_i : f \mapsto f'_{x_i}(\mal)$
	for $i \in [1..n]$.
\end{lemma}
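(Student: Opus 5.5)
The plan is to show two things: that each $D_i$ belongs to $\mathcal{D}_{\mal}(\Kx)$, and that the $D_i$ form a linearly independent set whose size matches the upper bound $\dim(\mathcal{D}_{\mal}(\Kx)) \leq n$.

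\emph{Membership.} Each $D_i : f \mapsto f'_{x_i}(\mal)$ is clearly $\K$-linear. Evaluating the product rule
\[
	(fg)'_{x_i} = f\,g'_{x_i} + g\,f'_{x_i}
\]
at $\mal$ gives $D_i(fg) = f(\mal)D_i(g) + g(\mal)D_i(f)$. This is exactly the $\mal$-derivation Leibniz rule, so $D_i \in \mathcal{D}_{\mal}(\Kx)$.

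\emph{Linear independence.} We evaluate the $D_i$ on the polynomials $x_j - \alpha_j$, which gives $D_i(x_j - \alpha_j) = \delta_{ij}$. Suppose $\sum_i c_i D_i = 0$. Applying this to $x_j - \alpha_j$ yields $c_j = 0$ for every $j$, so the $D_i$ are linearly independent.

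\emph{Spanning.} The set $G = \{x_i - \alpha_i : i \in [1..n]\}$ generates $\Kx$ as a $\K$-algebra and lies in $\mathfrak{m}_{\mal}$. Corollary \ref{cor-deriv_bound} therefore gives $\dim(\mathcal{D}_{\mal}(\Kx)) \leq n$. We have $n$ independent elements in a space of dimension at most $n$, so they form a basis.

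As a cross-check that avoids the dimension count, one can argue directly. For an arbitrary $D \in \mathcal{D}_{\mal}(\Kx)$, set $D' = D - \sum_i D(x_i - \alpha_i) D_i$. Then $D'$ vanishes on all $x_i - \alpha_i$ and on constants, since $D(1) = 2D(1)$ forces $D(1) = 0$. The Leibniz rule then gives $D'(gh) = 0$ for all $g, h \in \mathfrak{m}_{\mal}$. Expanding $f$ as a polynomial in the $x_i - \alpha_i$ shows that $D'(f) = 0$ for every $f$, so $D = \sum_i D(x_i - \alpha_i) D_i$.

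The only step requiring care is invoking the dimension bound, and that is already available from Corollary \ref{cor-deriv_bound}. There is no real obstacle.
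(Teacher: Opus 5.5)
Your proposal is correct and follows the paper's argument: the generating set $\{x_i - \alpha_i\} \subset \mathfrak{m}_{\mal}$ together with Corollary \ref{cor-deriv_bound} bounds $\dim(\mathcal{D}_{\mal}(\Kx))$ by $n$, and the $n$ linearly independent $\mal$-derivations $D_i$ then form a basis. Your direct cross-check is also valid. It subtracts $\sum_i D(x_i - \alpha_i) D_i$ and uses that the difference kills constants, the $x_i - \alpha_i$, and $\mathfrak{m}_{\mal}^2$. It has the small advantage of not depending on Theorem \ref{th-dim_D}, on which the corollary rests.
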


We are now ready to prove that the subalgebra spectrum is empty.

\begin{theorem}
	Let $A \subsetneq \Kx$ be a proper subalgebra of finite codimension. Then
	$\Sp(A) \not = \emptyset$.
\end{theorem}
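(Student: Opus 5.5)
Since $A \subsetneq \Kx$ has finite codimension, I would apply Theorem \ref{th-gorin} with $B = \Kx$. This gives a subalgebra $C \subseteq \Kx$ and an $\mal, \mbe$-subalgebra condition $L$ on $C$ with $A = C \cap \Ker(L)$. Because $A \neq C$, the functional $L$ is nonzero on $C$. I then want to produce a point of $\Sp(A)$ from $L$, and I split into two cases according to whether $\mal \neq \mbe$ or $\mal = \mbe$.

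\emph{Case $\mal \neq \mbe$.} Here $L$ is a bimodule derivation $C \to \Kab$. The computation in the proof of the unification lemma shows that every $g \in \Ker(L) = A$ satisfies $g(\mal) = g(\mbe)$. Pick $f \in C$ with $L(f) \neq 0$ and let $g \in A$. Then
\[
0 = L(fg) - L(gf) = L(f)\bigl(g(\mbe) - g(\mal)\bigr),
\]
where we used $L(g) = 0$, and this expansion is valid since $C$ is commutative. So $f(\mal) = f(\mbe)$ for all $f \in A$ with $\mal \neq \mbe$, and hence $\mal \in \Sp(A)$.

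\emph{Case $\mal = \mbe$.} Now $L$ is an $\mal$-derivation on $C$, and I aim to show that $\mal \in \Sp(A)$ through the derivative condition. The key point is that every $\mal$-derivation on $A$ should come from $\Kx$ restricted to $A$; this is the main obstacle. I would argue by contradiction. Suppose no nonzero $\bm{u}$ has $f'_{\bm{u}}(\mal) = 0$ for all $f \in A$. Then the functionals $D_i|_A : f \mapsto f'_{x_i}(\mal)$ are linearly independent on $A$. Equivalently, there exist $h_1, \ldots, h_n \in \mathfrak{m}_{\mal}(A)$ whose Jacobian matrix at $\mal$ is invertible. After a linear change, we may assume $h_i \equiv x_i - \alpha_i \pmod{\mathfrak{m}_{\mal}^2}$.

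The plan is then to use finite codimension to show that $\mathfrak{m}_{\mal}(A)$ contains a power $\mathfrak{m}_{\mal}^N$. Finite codimension should give that $A$ contains a nonzero ideal $I$ of $\Kx$, namely the conductor. Indeed, $\Kx$ is a finitely generated $A$-module, since it is integral over $A$ (each $x_i$ satisfies a polynomial relation, because the powers $x_i^k$ are linearly dependent modulo $A$). Its annihilator of $\Kx/A$ is then an ideal of $\Kx$ of finite codimension contained in $A$. The radical of this ideal is a finite intersection of maximal ideals, and if $\mal$ lies among them, $I \supseteq \mathfrak{m}_{\mal}^N \cap J$ with $J$ coprime to $\mathfrak{m}_{\mal}$.

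With such $h_i$ and this conductor, every element of $\mathfrak{m}_{\mal}$ is a polynomial in the $h_i$ modulo $\mathfrak{m}_{\mal}^N$, by inverting the truncated Taylor expansion. So $\mathfrak{m}_{\mal}$-locally, $A$ equals $\Kx$.

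Combining this over the finitely many points in the support of $\Kx/I$, if $A$ had no spectrum point at all, then $A$ separates every pair of those points (otherwise a character-difference point exists) and has full tangent directions at each. I would conclude via a Chinese remainder argument that $A + I = \Kx$, hence $A = \Kx$, a contradiction. So in this case some support point must be in $\Sp(A)$.

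This reformulation suggests a cleaner route for the whole theorem. Assume $\Sp(A) = \emptyset$ and show that $A/I \to \Kx/I \cong \prod_{\mal} \Kx/\mathfrak{m}_{\mal}^{N}$ is surjective. Separation of points, combined with idempotent lifting, gives the idempotents. Full tangent rank gives local surjectivity by a Nakayama-type argument for complete local rings. The delicate step is this last local surjectivity, where I would use that $A/I$ is a finite-dimensional algebra whose image contains elements generating $\mathfrak{m}_{\mal}/\mathfrak{m}_{\mal}^2$.
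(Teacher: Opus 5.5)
Your proof is correct, but it rests on a different idea from the paper's.

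\textbf{The paper's route.} The paper iterates Theorem \ref{th-gorin} to get a full filtration $A = A_m \subsetneq \cdots \subsetneq A_1 \subsetneq A_0 = \Kx$. It then looks at the \emph{first} step $A_1 = \Ker(L_0)$, where $L_0$ is a condition on $\Kx$ itself. Either $L_0$ is a character difference, or, by Lemma \ref{lm-Kx_derivations}, it is $f \mapsto f'_{\bm u}(\mal)$ with $\bm u \neq \bm 0$. In both cases $\Sp(A_1) \neq \emptyset$, and $\Sp(A_1) \subseteq \Sp(A)$ simply because $A \subseteq A_1$.

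\textbf{Your route.} You applied Theorem \ref{th-gorin} at the \emph{last} step $A = C \cap \Ker(L)$. There an $\mal$-derivation on $C$ need not be a directional derivative, which is exactly the obstacle you hit; it disappears if you descend to the bottom of the filtration instead. Your workaround is the conductor argument, and it is sound:
\begin{enumerate}
\item distinct evaluation characters on $A/I$ give, after lifting, the standard idempotents of $\Kx/I$;
\item full tangent rank plus nilpotence of the maximal ideal gives surjectivity onto each local factor;
\item hence $A + I = \Kx$, and so $A = \Kx$.
\end{enumerate}
Since this argument never uses $L$, your case split through Theorem \ref{th-gorin} is redundant, and you should present the conductor argument on its own.

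\textbf{Details to repair.}
\begin{itemize}
\item $\Kx/I$ is in general only a quotient of $\prod_{\mal}\Kx/\mathfrak{m}_{\mal}^N$. Either work with the local factors of $\Kx/I$, or replace $I$ by the smaller ideal $\prod_i \mathfrak{m}_{\mal_i}^N$, which still lies in $I \subseteq A$.
\item Say explicitly why the lifted idempotents coincide with those of $\Kx/I$: an idempotent of a product of local rings is determined by its values at the points.
\item Your remark that every $\mal$-derivation on $A$ ``should come from $\Kx$'' is false in general, though you never use it.
\end{itemize}

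\textbf{What each route buys.} The paper's proof is two lines, but it depends on Gorin's theorem, which the paper only cites. Yours is self-contained commutative algebra: it is the classical fact that a finite extension $A \subseteq \Kx$ separating points and tangent directions is an equality. It also locates a spectral point inside the zero set of the conductor.
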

\begin{proof}
    Let $m$ be the codimension of $A$ in $\Kx$. By Theorem \ref{th-gorin}, we
    can find a filtration
    \[
        A = A_m \subsetneq A_{m - 1} \subsetneq \ldots \subsetneq A_{1} \subsetneq A_{0} = \Kx,
    \]
    where each $A_{i + 1}$ is the kernel of some $\mal, \mbe$-subalgebra
    condition $L_{i} : A_i \to \K$. If $L_{0}$ is an $\mal, \mbe$-character
    difference, then $\{\mal, \mbe\} \subset \Sp(A_{1})$. If $L_{0}$ is an
    $\mal$-derivation then $\{\mal\} \subset \Sp(A_{1})$. Either way
    $\Sp(A_{1})$ is non-empty, whence $\Sp(A)$ must be as well.
    
\end{proof}

We conclude this section by defining a class of $\mal$-derivations which we
call trivial.

\begin{definition}
	An $\mal$-derivation over some subalgebra $A \subset \Kx$ of finite
	codimension is said to be trivial when $\mal \not \in \Sp(A)$.
\end{definition}

We call them trivial because, as will be shown in Corollary
\ref{cor-trivial_derivations}, they all exhibit the form given in Lemma
\ref{lm-Kx_derivations}, and therefore $\mathcal{D}_{\mal}(A)$ is only
interesting whenever $\mal \in \Sp(A)$. 

\subsection{Integral Closure, Lifting Morphisms and Isomorphic Subalgebras}

This section demonstrates that our definitions of subalgebra spectrum and
subalgebra conditions behave well under isomorphism. Let $A \subseteq \Kx$ be a
subalgebra of finite codimension. It is well-known that the integral closure
$\overline{A}$ of $A$ in $\Frac(A)$ is equal to $\Kx$, and using this fact it is
easy to show that isomorphic subalgebras of finite codimension in $\Kx$ have
subalgebra spectra and defining conditions which differ only by a polynomial
automorphism. \\

For convenience, we begin by including a proof that $\overline{A} = \Kx$ when
$A$ has finite codimension in $\Kx$.

\begin{lemma}
	Let $A \subset \Kx$ be a subalgebra of finite codimension. Then
	$\overline{A} = \Kx$.
\end{lemma}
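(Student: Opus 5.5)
We need to show $\Frac(A) = \K(\mx)$ and that $\Kx$ is integral over $A$; then, since $\Kx$ is integrally closed in its fraction field, the integral closure of $A$ in $\Frac(A)$ is exactly $\Kx$. The key tool is the finite codimension, which gives a nonzero ideal of $\Kx$ contained in $A$, the conductor. Concretely, let $I = \{f \in \Kx : f\Kx \subseteq A\}$. This is an ideal of $\Kx$ contained in $A$. To see it is nonzero (indeed of finite codimension), pick a basis $h_1,\ldots,h_m$ of a complement of $A$ in $\Kx$. Then $I$ is the intersection over $i$ of the preimages of $A$ under $f\mapsto fh_i$ together with $A$ itself. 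Each such preimage is a subspace of finite codimension in $A$, since $A \to \Kx/A$, $f \mapsto fh_i + A$, is linear into an $m$-dimensional space. Hence $I$ has finite codimension, so $I \neq 0$.

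For the fraction fields, pick nonzero $g \in I$. Then $g x_i \in A$ and $g \in A$, so $x_i = (gx_i)/g \in \Frac(A)$. This gives $\Frac(A) = \K(\mx)$.

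For integrality, take $f \in \Kx$. The powers $1, f, f^2, \ldots$ span a subspace of $\Kx$, and since $\Kx/A$ is finite dimensional, some nontrivial linear combination $\sum_{k=0}^{m} c_k f^k$ lies in $A$. This alone does not give a monic relation, so I would instead use the module viewpoint. Consider the $A$-module $A[f] \subseteq \Kx$. Because $A$ has finite codimension, $A[f] = A + \langle 1, f, \ldots, f^{m}\rangle$ is a finitely generated $A$-module. It is generated by $1, f, \ldots, f^m$ together with representatives of a basis of $A[f]/A$ chosen among the powers of $f$; in fact a finite-dimensional complement suffices. Moreover $A[f]$ is faithful, since it contains $1$, and it is stable under multiplication by $f$. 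The standard determinant trick (Cayley–Hamilton) then yields a monic polynomial over $A$ annihilating $f$, so $f$ is integral over $A$.

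Finally, $\Kx$ is a UFD, hence integrally closed in $\K(\mx) = \Frac(A)$. Any element of $\Frac(A)$ integral over $A$ is in particular integral over $\Kx$, so it lies in $\Kx$. Combined with the previous step, this gives $\overline{A} = \Kx$.

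The step needing most care is integrality. Finite generation of $A[f]$ as an $A$-module is easy from $\dim(\Kx/A)<\infty$: choose finitely many elements of $A[f]$ whose images span $A[f]/A$, and together with $1$ they generate. Note that the conductor is not actually needed here; it is needed only for equality of fraction fields.
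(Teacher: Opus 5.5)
Your proof is correct, but it takes a different route from the paper on both halves. For $\Kx \subseteq \Frac(A)$, the paper works one element at a time: for fixed $f$, any $m+1$ linearly independent elements of $fA$ become dependent modulo $A$, so $fA \cap A \neq \{0\}$ and $f = g/h$ with $g, h \in A$. Your conductor argument proves more. It shows that $A$ contains a nonzero ideal $I$ of $\Kx$ of finite codimension, so a single denominator works for every $f$; this is the phenomenon the paper later exhibits concretely with $Q'_N(S) \subseteq A$.

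For integrality, the paper stays purely linear-algebraic, and your claim that a linear relation among powers ``does not give a monic relation'' is mistaken. Use $f, f^2, \ldots, f^{m+1}$ rather than including $1$, which lies in $A$ and gives only trivial relations. A nontrivial relation $\sum_{i=1}^{m+1} c_i f^i = a \in A$ with top nonzero index $k \geq 1$ then gives the monic polynomial $y^k + \sum_{i=1}^{k-1} (c_i/c_k) y^i - a/c_k \in A[y]$ vanishing at $f$, because the whole combination is absorbed into the constant term. Your determinant-trick route is also valid, and it can be shortened. $\Kx = A + \langle h_1, \ldots, h_m \rangle$ is itself a finitely generated faithful $A$-module, so all of $\Kx$ is integral over $A$ at once, with no need for $A[f]$.

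One sentence in your write-up is false and should be deleted. Reading $\langle \cdot \rangle$ as a $\K$-span (the paper's convention), $A[f] \neq A + \langle 1, f, \ldots, f^m \rangle$ in general, and $A[f]/A$ need not be spanned by powers of $f$. For example, take four distinct points $p_1, \ldots, p_4$ and let $A = \{g : g(p_1) = g(p_2),\ g(p_3) = g(p_4)\}$, which has codimension $2$. Let $f$ take the values $1, 0, 1, 0$ at these points. Every element of $A + \K[f]$ satisfies $g(p_1) - g(p_2) = g(p_3) - g(p_4)$. But $hf \in A[f]$ does not, when $h \in A$ takes the values $1, 1, 0, 0$. (Read as an $A$-span, the equality would presuppose the integrality you are proving.) Your closing justification, choosing finitely many elements of $A[f]$ whose images span $A[f]/A$, is the correct one and does not rely on that sentence.
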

\begin{proof}
	We will show that $\Kx \subseteq \overline{A}$ after which equality follows
	by noting that $\Kx$ is a UFD, therefore integrally closed. \\
	
    We begin by showing that $\Kx \subseteq \Frac(A)$. Let $f \in \Kx$. We want
    to show that there exist $g, h \in A$ with $h \not = 0$ such that $f =
    g/h$. This is equivalent to showing that $fA \cap A \not = \{0\}$. Let $m =
    \dim(\Kx/A)$. Then any set of $m + 1$ linearly independent polynomials from
    $fA$ become linearly dependent modulo $A$. Any such non-trivial linear
    combination yields a non-zero element in $fA \cap A$, and it follows that
    $\Kx \subseteq \Frac(A)$. \\
	
    We now show that any polynomial in $\Kx$ is integral over $A$. Let $f \in
    \Kx$ and suppose again that $m = \codim(A)$. Then some $\K$-linear
    combination $\sum_{i = 1}^{m + 1} c_i f^{i}$ must lie in $A$. Let $k$ be
    the greatest index such that $c_k \not = 0$. We then obtain a monic
    polynomial
	\[
        y^{k} + \sum_{i = 1}^{k-1} \frac{c_i}{c_{k}} y^i
		-
        f^{k} - \sum_{i = 1}^{k-1} \frac{c_i}{c_{k}} f^i
	\]
	in $A[y]$ which has $f$ as a root. 
	
\end{proof}

This will be combined with the following well-known result.

\begin{lemma}
    Let $\phi : R \to R'$ be an injective morphism of integral domains. Then
    there is a unique injective morphism $\overline{\phi} : \overline{R} \to
    \overline{R'}$ which restricts to $\phi$, where $\overline{R}$ is the
    integral closure of $R$. Furthermore, if $\phi$ is an isomorphism, then so
    is $\overline{\phi}$.
\end{lemma}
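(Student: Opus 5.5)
The plan is to extend $\phi$ first to fraction fields and then restrict to integral closures. Since $\phi : R \to R'$ is an injective morphism of integral domains, the universal property of localization gives a unique morphism $\Phi : \Frac(R) \to \Frac(R')$ with $\Phi(a/b) = \phi(a)/\phi(b)$. This is well defined because $b \neq 0$ forces $\phi(b) \neq 0$ by injectivity. It is injective because it is a field homomorphism, which is automatically injective. I will then define $\overline{\phi} = \restr{\Phi}{\overline{R}}$ and check the following three points in turn.

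\textbf{Image.} The first point is that $\overline{\phi}$ lands in $\overline{R'}$. Let $f \in \overline{R}$ satisfy a monic equation $f^k + \sum_{i<k} a_i f^i = 0$ with $a_i \in R$. Applying $\Phi$ gives $\Phi(f)^k + \sum_{i<k} \phi(a_i)\Phi(f)^i = 0$. This is a monic equation over $\phi(R) \subseteq R'$, so $\Phi(f)$ is integral over $R'$ and lies in $\overline{R'}$. Injectivity of $\overline{\phi}$ and the fact that it restricts to $\phi$ on $R$ are immediate from the corresponding properties of $\Phi$.

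\textbf{Uniqueness.} This is the step needing the most care, since a priori another morphism $\psi : \overline{R} \to \overline{R'}$ need not come from the fraction fields. Suppose $\psi$ restricts to $\phi$, and let $f = a/b \in \overline{R}$ with $a, b \in R$ and $b \neq 0$. Then $bf = a$ holds in $\overline{R}$, so $\phi(b)\psi(f) = \psi(bf) = \phi(a)$. Because $\overline{R'}$ is a domain inside $\Frac(R')$ and $\phi(b) \neq 0$, this forces $\psi(f) = \phi(a)/\phi(b) = \overline{\phi}(f)$.

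\textbf{Isomorphism case.} If $\phi$ is an isomorphism, apply the construction to $\phi^{-1}$ to get $\overline{\phi^{-1}} : \overline{R'} \to \overline{R}$. The composites $\overline{\phi^{-1}} \circ \overline{\phi}$ and $\overline{\phi} \circ \overline{\phi^{-1}}$ restrict to the identities on $R$ and on $R'$. By the uniqueness just proved, applied with $R' = R$ and with $R = R'$ respectively, each composite equals the identity. Hence $\overline{\phi}$ is an isomorphism.
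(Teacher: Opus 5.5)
Your proposal is correct and takes essentially the same route as the paper. Both extend $\phi$ to $\Frac(R) \to \Frac(R')$, restrict to $\overline{R}$ via the monic-equation argument, and derive uniqueness from $\phi(b)\psi(a/b) = \phi(a)$. The only cosmetic difference is in the isomorphism case: you invoke your uniqueness statement to see that the composites are identities, whereas the paper simply notes that the extension of $\widehat{\phi}^{-1}$ is inverse to $\overline{\phi}$.
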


\begin{proof}
    First of all, it is easy to see that there exists a well-defined morphism
    $\widehat{\phi} : \Frac(R) \to \Frac(R')$ given by $\widehat{\phi} : a/b
    \to \phi(a)/\phi(b)$. Moreover, it is easy to verify that $\widehat{\phi}$
    is an isomorphism whenever $\phi$ is. \\
	
	We now let $\overline{\phi}$ be the restriction of $\widehat{\phi}$ to the
	integral closure $\overline{R}$, and show that it lands in $\overline{R'}$.
	Let $a/b \in \Frac(R)$ be an integral element in $R$ which is a root of the
	monic polynomial $f(y) \in R[y]$. Then 
	\[
		0 = \overline{\phi}(0) = \overline{\phi}(f(a/b)) = \phi(f)(\overline{\phi}(a/b))
	\]
    and $\overline{\phi}(a/b)$ is a root of the monic polynomial $\phi(f)(y)
    \in R'[y]$. \\
	
    When $\phi$ is an isomorphism, $\widehat{\phi}$ is as well. We may then
    repeat the argument above for $\widehat{\phi}^{-1}$ and obtain the map
    $\overline{\phi}^{-1} : \overline{R'} \to \overline{R}$. This map is
    necessarily inverse to $\overline{\phi}$, showing that $\overline{\phi}$ is
    an isomorphism. \\
	
    Finally, the extension to the integral closure is unique, for if $a/b \in
    \overline{A}$ we need $\phi(a) = \overline{\phi}(a/b)\phi(b)$ which forces
    $\overline{\phi}(a/b) = \phi(a)/\phi(b)$.

\end{proof}

Now let $A, B \subset \Kx$ be two subalgebras of finite codimension and $\phi :
A \to B$ be a $\K$-algebra isomorphism. From the two previous lemmas it follows
that $\phi$ may be lifted to an automorphism of $\Kx$. Thus if $A$ has
codimension $m$ in $\Kx$, it's immediate that $B = \phi(A)$ has codimension $m$
in $\phi(\Kx) = \Kx$ as well. Moreover, $\phi$ maps each $x_i$ to some $f_i \in
\Kx$ and induces the polynomial automorphism $F : \K^{n} \to \K^{n}$ given by
$F : (\alpha_{1}, \ldots, \alpha_n) \mapsto (f_1(\alpha_{1}), \ldots,
f_n(\alpha_n))$. Then $\phi$ is given by precomposition by $F$, $\phi = F^{*}$,
and for any $f \in \Kx, \mal \in \K^{n}$ we then have that $\phi(f)(\mal) =
F^{*}(f)(\mal) = f(F(\mal))$, and it follows that $\Sp(A) = \{F(\mal) : \mal
\in \Sp(B)\}$. Finally, if
\[
	A = A_{m} \subsetneq A_{m - 1} \subsetneq \ldots \subsetneq A_{0} = \Kx
\]
is a filtration by subalgebras of $\Kx$ landing in $A$ and we let $B_i = \phi(A_i)$, 
it follows that 
\[
	B = B_{m} \subsetneq B_{m - 1} \subsetneq \ldots \subsetneq B_{0} = \Kx
\]
is a filtration by subalgebras of $\Kx$ landing in $B$, and it's easy to verify
that any $\mal, \mbe$-subalgebra condition $L$ on $B_i$ induces a $F(\mal),
	F(\mbe)$-subalgebra condition $L \circ \phi$ on $A_i$. We summarize our results
in a theorem.

\begin{theorem}
	Let $A, B$ be isomorphic subalgebras of finite codimension in $\Kx$. Then
	$A$ and $B$ have the same codimension in $\Kx$, the isomorphism $A \cong B$
	induces a polynomial automorphism $F : \K^{n} \to \K^{n}$ such that $\Sp(A)
		= F(\Sp(B))$, and any set of subalgebra conditions defining $B$ induces a
	set of subalgebra conditions defining $A$ when precomposed with $F^{*}$.
\end{theorem}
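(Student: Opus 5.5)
The argument combines the two lemmas just proved: the integral closure of a finite-codimension subalgebra is $\Kx$, and isomorphisms of domains lift uniquely to their integral closures. All claims then follow from this lifting.

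\emph{Lifting.} Let $\psi : B \to A$ be the given $\K$-algebra isomorphism. Both are subalgebras of finite codimension, so $\overline{A} = \overline{B} = \Kx$. The lifting lemma then produces a unique automorphism $\overline{\psi} : \Kx \to \Kx$ restricting to $\psi$. Set $f_i = \overline{\psi}(x_i)$ and $F = (f_1, \ldots, f_n) : \K^n \to \K^n$. Since $\overline{\psi}$ is a $\K$-algebra map, $\overline{\psi}(f) = f(f_1, \ldots, f_n) = f \circ F$ for every $f$, that is, $\overline{\psi} = F^{*}$. Applying the same reasoning to $\overline{\psi}^{-1}$ gives a polynomial inverse of $F$, so $F$ is a polynomial automorphism of $\K^n$.

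\emph{Codimension.} $F^{*}$ is a $\K$-linear bijection of $\Kx$ carrying $B$ onto $A$. It therefore induces a linear isomorphism $\Kx/B \cong \Kx/A$, and the codimensions agree.

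\emph{Spectrum.} I would verify each clause of the definition of $\Sp$ through $F^{*}$.

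For the character clause: $f(\mal) = f(\mbe)$ for all $f \in A$ holds iff $g(F(\mal)) = g(F(\mbe))$ for all $g \in B$, because every $f \in A$ has the form $g \circ F$. Since $F$ is injective, $\mal \neq \mbe$ iff $F(\mal) \neq F(\mbe)$.

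For the derivative clause, the chain rule gives $(g \circ F)'_{\bm u}(\mal) = g'_{J_F(\mal)\bm u}(F(\mal))$. The Jacobian $J_F(\mal)$ is invertible because $F$ has a polynomial inverse, so $\bm u \neq \bm 0$ iff $J_F(\mal)\bm u \neq \bm 0$.

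Together these show $\mal \in \Sp(A)$ iff $F(\mal) \in \Sp(B)$. This is the step needing the most care, since the direction convention must be fixed consistently. With $F^{*}$ mapping $B$ to $A$, it reads $\Sp(B) = F(\Sp(A))$, which is the statement's $\Sp(A) = F(\Sp(B))$ after renaming the roles of $A$ and $B$ (equivalently, using $F^{-1}$).

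\emph{Conditions.} Take a set of subalgebra conditions $L_1, \ldots, L_m$ for $B$, with filtration $B_i$. Put $A_i = F^{*}(B_i)$; these are subalgebras forming a filtration ending in $A$. Each $L_i \circ (F^{*})^{-1}$ is a linear functional on $A_{i-1}$ whose kernel is $F^{*}(\ker L_i \cap B_{i-1})$, and this is a subalgebra. So precomposition transports a set of subalgebra conditions defining $B$ to a set defining $A$.

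The Leibniz rule transports as well: if $L$ is a $\mal,\mbe$-condition on $B_{i-1}$, then $L \circ (F^{*})^{-1}$ satisfies the Leibniz rule into $\,_{F^{-1}(\mal)}\K_{F^{-1}(\mbe)}$, because $(F^{*})^{-1}(h)(\mal) = h(F^{-1}(\mal))$. The main (minor) obstacle is keeping the direction of $F$ versus $F^{-1}$ consistent throughout; everything else is bookkeeping.
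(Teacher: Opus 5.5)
Your proposal is correct and follows the paper's route. You use $\overline{A}=\overline{B}=\Kx$ and the lifting lemma to extend the isomorphism to an automorphism $F^{*}$ of $\Kx$, and then transport the codimension, the spectrum, and the filtration with its conditions along $F^{*}$. Your treatment of $\Sp$ is more complete than the paper's, which only uses $\phi(f)(\mal)=f(F(\mal))$ and never checks the directional-derivative clause; you handle that clause with the chain rule and the invertible Jacobian. Taking the isomorphism in the direction $A\to B$, as the paper does, would spare you the $F$ versus $F^{-1}$ renaming.
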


\subsection{Clusters}

We now define a central equivalence relation on the subalgebra spectrum. 
\begin{definition}
	Two elements $\mal, \mbe \in \Sp(A)$ are said to be equivalent, written
	$\mal \sim \mbe$ if $f(\mal) = f(\mbe)$ for all $f \in A$. We define the
	clusters of $\Sp(A)$ to be the equivalence classes induced by this relation.
\end{definition}

One immediate justification for the definition is that spectral elements in the
same cluster $\mal \sim \mbe$ share the same derivation space
$\mathcal{D}_{\mal}(A) = \mathcal{D}_{\mbe}(A)$, and we will see in this
section that the connection between clusters and subalgebra conditions runs
deep. \\ 

The following lemma has several important corollaries.

\begin{lemma}\label{lm-sep_conds}
	Let $A \subseteq \Kx$ be a subalgebra of finite codimension, and $L : A \to
		\K$ be a non-zero linear functional which is both an $\mal, \mbe$- and a
	$\mga, \mde$-subalgebra condition. Then either
	\[
		\mal \sim \mga, \mbe \sim \mde,
	\]
	or
	\[
		\mal \sim \mde, \mbe \sim \mga.
	\]
\end{lemma}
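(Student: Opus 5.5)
Write the two Leibniz rules side by side and take their difference. For all $f, g \in A$ we have
\[
	L(fg) = f(\mal)L(g) + g(\mbe)L(f) = f(\mga)L(g) + g(\mde)L(f).
\]
Subtracting gives the key identity
\[
	\bigl(f(\mal) - f(\mga)\bigr)L(g) + \bigl(g(\mbe) - g(\mde)\bigr)L(f) = 0
\]
for all $f, g \in A$. Applying the same computation to $L(gf)$, or equivalently swapping the roles of $f$ and $g$ using commutativity, gives a second identity. The goal is to extract equivalences of points from these two identities.

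\textbf{Step 1.} Since $L \neq 0$, fix $h \in A$ with $L(h) = 1$. Put $g = h$ in the identity. For every $f \in A$ this gives
\[
	f(\mal) - f(\mga) = -\bigl(h(\mbe) - h(\mde)\bigr)L(f) = \lambda L(f),
\]
where $\lambda$ is a constant. Symmetrically, putting $f = h$ gives $g(\mbe) - g(\mde) = \mu L(g)$ for some constant $\mu$. Substituting both back into the identity yields $(\lambda + \mu)L(f)L(g) = 0$. Taking $f = g = h$ shows $\mu = -\lambda$.

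\textbf{Step 2.} If $\lambda = 0$, then $f(\mal) = f(\mga)$ and $f(\mbe) = f(\mde)$ for all $f \in A$. Hence $\mal \sim \mga$ and $\mbe \sim \mde$, which is the first alternative.

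\textbf{Step 3.} This is the case $\lambda \neq 0$, and it is the main step. Here $L = \lambda^{-1}(\mathrm{ev}_{\mal} - \mathrm{ev}_{\mga})$ on $A$, so $L$ is also an $\mal,\mga$-character difference. We must then show $\mal \sim \mde$ and $\mbe \sim \mga$. I would use the fact that $L$ satisfies both the $\mal,\mbe$-Leibniz rule and the $\mal,\mga$-Leibniz rule. The same subtraction as above, now applied to this pair, gives
\[
	\bigl(g(\mbe) - g(\mga)\bigr)L(f) = 0
\]
for all $f, g$. Since $L \neq 0$, this forces $\mbe \sim \mga$. Combining with Step 1, $\mu = -\lambda$ gives $g(\mbe) - g(\mde) = -\lambda L(g) = -(g(\mal) - g(\mga))$. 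Using $g(\mbe) = g(\mga)$, this becomes $g(\mde) = g(\mal)$, so $\mal \sim \mde$.

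The obstacle I expect is bookkeeping: making sure the cases are exhaustive and that the points lie in $\Sp(A)$ when $\sim$ is required. The paper defines $\sim$ only on $\Sp(A)$, so if two of the points are distinct, their equivalence places them in the spectrum by definition. If they coincide, the relation should be read as equality, which is trivially reflexive.
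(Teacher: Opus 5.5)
Your proof is correct, but it takes a different route from the paper's. The paper picks $f\in\mathfrak{m}_{\mal}(A)$ with $L(f)=1$ and writes out four versions of the Leibniz rule, using commutativity. From these it derives that $\mal\sim\mga\Leftrightarrow\mbe\sim\mde$ and $\mal\sim\mde\Leftrightarrow\mbe\sim\mga$. It then notes that every element of $\Ker(L)$ takes the same value at all four points. Since $A=\Ker(L)\oplus\K g$, it suffices to test one $g\in\mathfrak{m}_{\mal}(A)$ with $L(g)=1$, and the paper does this by expanding $L(g^3)$ in two ways to obtain $g(\mde)\bigl(g(\mbe)-g(\mde)\bigr)=0$.

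You instead subtract the two given Leibniz rules. After a single normalization you read off that $f\mapsto f(\mal)-f(\mga)$ and $f\mapsto f(\mbe)-f(\mde)$ equal $\lambda L$ and $-\lambda L$ on $A$. The dichotomy $\lambda=0$ versus $\lambda\neq 0$ then replaces the paper's case analysis. In the second case, recognizing $L$ as a multiple of the $\mal,\mga$-character difference lets you compare two Leibniz rules with the same left point $\mal$, which forces $\mbe\sim\mga$ and then $\mal\sim\mde$.

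Your route works only with products of two elements. It needs neither the cubic identity nor the codimension-one splitting of $A$, and it makes explicit that in the crossed case $L$ is a multiple of $f\mapsto f(\mal)-f(\mga)$. The paper's route has the by-product that $\Ker(L)$ already identifies all four points, which fits the cluster-merging statements that follow.

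The ``second identity'' from $L(gf)$ is never used and can be dropped. Your reading of $\sim$ as agreement of all $f\in A$ matches how the paper itself uses the relation.
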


\begin{proof}
	The statement of the lemma says that for any $f, g \in A$, we have
	\begin{align}\label{eq-sep_eqs}
		L(fg)
		 & =
		f(\mal)L(g) + g(\mbe)L(f)           \\
		 & =
		f(\mbe)L(g) + g(\mal)L(f) \nonumber \\
		 & =
		f(\mga)L(g) + g(\mde)L(f) \nonumber \\
		 & =
		f(\mde)L(g) + g(\mga)L(f). \nonumber
	\end{align}
	Pick $f \in \mathfrak{m}_{\mal}(A)$ such that $L(f) = 1$, and $g \in A$.
	Then the above equations turn into
	\begin{align*}
		L(fg)
		 & =
		g(\mbe)               \\
		 & =
		f(\mbe)L(g) + g(\mal) \\
		 & =
		f(\mga)L(g) + g(\mde) \\
		 & =
		f(\mde)L(g) + g(\mga),
	\end{align*}
	which we can rearrange to
	\begin{align*}
		f(\mbe)L(g) & = g(\mbe) - g(\mal)   \\
		f(\mga)L(g) & = g(\mbe) - g(\mde)   \\
		f(\mde)L(g) & = g(\mbe) - g(\mga),
	\end{align*}
	and we see that if $\mal \sim \mga$, then $f(\mga) = f(\mal) = 0$, and
	$g(\mbe) - g(\mde) = 0$ for all $g \in A$, i.e $\mbe \sim \mde$. Similarly,
	if $\mal \sim \mde$ then $\mbe \sim \mga$. Moreover, by symmetric
	arguments, it follows that these implications are equivalences. We now just
	need to show that one of these cases occur. \\
	
	Pick $f$ as above and $g \in A$ such that $L(g) = 0$. Then equations
    (\ref{eq-sep_eqs}) turn into
	\[
		g(\mbe) = g(\mal) = g(\mde) = g(\mga).
	\]
	Hence all four points lie in the same cluster in $\Sp(A \cap \Ker(L))$, so
	we only need to check that polynomials outside of $\Ker(L)$ adhere to the
	equalities promised by the lemma. By combining this with our previous
	argument, it follows that we are done if we can show one of:
	\begin{enumerate}
		\item $g(\mal) = g(\mga)$ for all $g \in A \setminus \Ker(L)$,
		\item $g(\mbe) = g(\mde)$ for all $g \in A \setminus \Ker(L)$,
		\item $g(\mal) = g(\mde)$ for all $g \in A \setminus \Ker(L)$,
		\item $g(\mbe) = g(\mga)$ for all $g \in A \setminus \Ker(L)$.
	\end{enumerate}
	Let $g \in \mathfrak{m}_{\mal}(A)$ be such that $L(g) = 1$. Then as
	$\Ker(L)$ has codimension $1$ in $A$, it follows that $A \setminus \Ker(L)
		= \{cg + f : c \in \K, f \in \Ker(L)\}$. As the polynomials in $\Ker(L)$
	satisfy all of the four equalities above, we are done if we can show that
	$g$ satisfies one of them. \\
	
	To do this, we examine two of the ways we can expand $L(g^{3})$ according
	to the various $\mal, \mbe, \mga, \mde$-variations of the Leibniz rule
	available. We have
	\begin{align*}
		L(g^3) 
		 & = g(\mbe)L(g^2) + g^2(\mal)L(g)                           \\
		 & = g(\mbe)g(\mga)L(g) + g(\mbe)g(\mde)L(g) + g^2(\mal)L(g) \\
		 & = g(\mbe)g(\mga) + g(\mbe)g(\mde),                        \\
		L(g^3) 
		 & = g(\mga)L(g^2) + g^2(\mde)L(g)                           \\
		 & = g(\mal)g(\mga)L(g) + g(\mbe)g(\mga)L(g) + g^2(\mde)L(g) \\
		 & = g(\mbe)g(\mga) + g^{2}(\mde),
	\end{align*}
	and by taking the difference of the two expansions we get
	\[
		0
		=
		g(\mde)(g(\mbe) - g(\mde)).
	\]
	Hence either $g(\mbe) = g(\mde)$, or $g(\mde) = 0 = g(\mal)$, and we are
	done.
	
\end{proof}

In particular, if $\mal \sim \mbe$ in $A$ and $L$ is both a $\mal,
	\mbe$-subalgebra condition (I.e an $\mal$-derivation) and a $\mga,
	\mde$-subalgebra condition on $A$, it follows from the lemma that $\mal \sim
	\mbe \sim \mga \sim \mde$ in $A$ and we get the following two corollaries.

\begin{corollary}\label{cor-no-subalg-cond-is-deriv-and-diff}
	No non-zero subalgebra condition is both a character difference and an
	$\mal$-derivation.
\end{corollary}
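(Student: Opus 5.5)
Suppose, for contradiction, that $L : A \to \K$ is a non-zero subalgebra condition which is both an $\mga, \mde$-character difference and an $\mal$-derivation. Here $A \subseteq \Kx$ is a subalgebra of finite codimension and $\mga \neq \mde$. Writing $L = c(f(\mga) - f(\mde))$, non-zeroness forces $c \neq 0$ and forces the existence of some $h \in A$ with $h(\mga) \neq h(\mde)$. By the lemma unifying character differences with bimodule derivations, $L$ is then a $\mga, \mde$-subalgebra condition. On the other hand, since ${}_{\mal}\Ka = \Ka$, an $\mal$-derivation is exactly an $\mal, \mal$-subalgebra condition.

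We now apply Lemma \ref{lm-sep_conds} to $L$, viewed as an $\mal, \mal$-subalgebra condition and as a $\mga, \mde$-subalgebra condition. In either alternative of that lemma we get $\mal \sim \mga$ and $\mal \sim \mde$, hence $\mga \sim \mde$ by transitivity. This is exactly the remark preceding the corollary, with $\mbe = \mal$.

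It follows that $f(\mga) = f(\mde)$ for every $f \in A$. But then $L(f) = c(f(\mga) - f(\mde)) = 0$ for all $f \in A$, contradicting $L \neq 0$.

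The one point needing care is that $\sim$ is defined on $\Sp(A)$, while Lemma \ref{lm-sep_conds} is applied to arbitrary points. The proof of that lemma only uses the pointwise relation ``$f(\mal) = f(\mbe)$ for all $f \in A$''. I would read $\sim$ in that sense here, so the argument does not need the points to lie in $\Sp(A)$.
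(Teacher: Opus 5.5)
Your proposal is correct and is essentially the paper's own argument: the paper derives this corollary directly from Lemma~\ref{lm-sep_conds} with $\mbe = \mal$, concluding $\mal \sim \mga \sim \mde$, so the character difference vanishes on $A$. The extra details you spell out — that $L \neq 0$ forces $c \neq 0$, that a character difference is a $\mga, \mde$-subalgebra condition, and that $\sim$ should be read pointwise rather than only on $\Sp(A)$ — are points the paper leaves implicit.
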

\begin{corollary}\label{cor-deriv_disjoint}
	Let $A \subset \Kx$ be a subalgebra of finite codimension where $\mal \not
		\sim \mga$. Then $\mathcal{D}_{\mal}(A) \cap \mathcal{D}_{\mga}(A) = 0$.
\end{corollary}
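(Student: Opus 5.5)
Let $D \in \mathcal{D}_{\mal}(A) \cap \mathcal{D}_{\mga}(A)$ and suppose, for contradiction, that $D \neq 0$. The plan is to apply Lemma \ref{lm-sep_conds} to $D$, viewed as a non-zero linear functional that is simultaneously an $\mal,\mal$- and a $\mga,\mga$-subalgebra condition on $A$.

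First I would record the hypotheses. An $\mal$-derivation on $A$ is, by definition, a linear functional satisfying the Leibniz rule $A \to \Ka = \,_{\mal}\K_{\mal}$. It is therefore an $\mal,\mbe$-subalgebra condition with $\mbe = \mal$. In the same way, $D$ is a $\mga,\mde$-subalgebra condition with $\mde = \mga$.

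Lemma \ref{lm-sep_conds} applies because $A$ has finite codimension and $D \neq 0$. It yields either $\mal \sim \mga$ and $\mal \sim \mga$, or $\mal \sim \mga$ and $\mal \sim \mga$; in both cases $\mal \sim \mga$. Here $\sim$ is read as the relation $f(\mal) = f(\mga)$ for all $f \in A$, which is exactly what the proof of Lemma \ref{lm-sep_conds} establishes. This contradicts the hypothesis $\mal \not\sim \mga$, so $D = 0$ and the intersection is trivial.

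The only point needing care is whether the proof of Lemma \ref{lm-sep_conds} still works in the degenerate case where the two pairs of points coincide, i.e.\ $\mal = \mbe$ and $\mga = \mde$. The lemma is stated for arbitrary $\mal,\mbe$-subalgebra conditions, which by definition allow $\mal = \mbe$, so I expect no genuine obstacle. As a sanity check, the direct computation is short. Choose $f \in \mathfrak{m}_{\mal}(A)$ with $D(f) = 1$. Comparing the two Leibniz expansions of $D(fg)$ gives
\[
g(\mal) = f(\mga)D(g) + g(\mga)
\]
for all $g \in A$. Taking $g = f$ gives $f(\mga) = 0$. Substituting back gives $g(\mal) = g(\mga)$ for all $g \in A$, which is $\mal \sim \mga$ directly.
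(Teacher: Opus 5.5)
Your proof is correct and follows the paper's route exactly: the paper derives this corollary from Lemma~\ref{lm-sep_conds} by taking $\mbe=\mal$ and $\mde=\mga$, so that a non-zero $D$ lying in both derivation spaces would force $\mal\sim\mga$. Your direct check of the degenerate case is also valid and self-contained; it silently uses $D(1)=0$, to obtain $f\in\mathfrak{m}_{\mal}(A)$ with $D(f)=1$, and characteristic $0$, to cancel the factor in $0=2f(\mga)$.
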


The following corollary is also an immediate consequence of the lemma.

\begin{corollary}\label{cor-merge_two_clusters}
	Two character differences agree if and only if they pertain to the same two
	clusters. Thus kerneling by a non-zero character difference merges
	exactly two clusters.
\end{corollary}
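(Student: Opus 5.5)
The plan is to read ``agree'' as ``coincide up to a non-zero scalar'', which for non-zero functionals is the same as having the same kernel by Lemma \ref{lm-intersect_lin_funcs}. Both halves then follow from Lemma \ref{lm-sep_conds} and Lemma \ref{lm-intersect_lin_funcs}, with no new computation. Throughout, let $E(f) = c(f(\mal) - f(\mbe))$ and $E'(f) = c'(f(\mga) - f(\mde))$ be non-zero character differences on $A$. Non-zero means $\mal \not\sim \mbe$ and $\mga \not\sim \mde$ in $A$, where we regard $\sim$ as an equivalence relation on all of $\K^n$.

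\emph{First statement.} Suppose $E = \lambda E'$ with $\lambda \neq 0$. A scalar multiple of a $\mga,\mde$-subalgebra condition is again one, so $E$ is both an $\mal,\mbe$- and a $\mga,\mde$-subalgebra condition. Lemma \ref{lm-sep_conds} then gives $\mal \sim \mga, \mbe \sim \mde$, or $\mal \sim \mde, \mbe \sim \mga$. In either case $\{[\mal],[\mbe]\} = \{[\mga],[\mde]\}$.

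Conversely, suppose $\mal \sim \mga$ and $\mbe \sim \mde$. Then $f(\mal) - f(\mbe) = f(\mga) - f(\mde)$ for every $f \in A$, so $E = (c/c')E'$. In the swapped case $\mal \sim \mde$, $\mbe \sim \mga$, the same identity holds with a sign, giving $E = -(c/c')E'$.

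\emph{Second statement.} Let $A' = A \cap \Ker(E)$. Since $A' \subset A$, any two points equivalent in $A$ remain equivalent in $A'$. Hence each $A'$-class is a union of $A$-classes.

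The two classes $[\mal]$ and $[\mbe]$ are distinct in $A$ but merge in $A'$, because every $f \in A'$ satisfies $f(\mal) = f(\mbe)$. So it remains to show that no other merging occurs.

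Take $\mga \not\sim \mde$ in $A$ with $\mga \sim \mde$ in $A'$. Then the $\mga,\mde$-character difference $E'$ is non-zero on $A$ and vanishes on $\Ker(E)$. By Lemma \ref{lm-intersect_lin_funcs}, $E' = \lambda E$, and $\lambda \neq 0$ because $E' \neq 0$. By the first part, $\{[\mga],[\mde]\} = \{[\mal],[\mbe]\}$ as $A$-classes.

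So the only pair of distinct $A$-classes identified in $A'$ is $[\mal],[\mbe]$. By transitivity, the $A'$-classes are exactly the $A$-classes with these two replaced by their union.

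\emph{Bookkeeping.} The one point needing care is that clusters are defined on $\Sp(A)$, not on all of $\K^n$. I would note that $\Sp(A) \subseteq \Sp(A')$, since any condition satisfied by all of $A$ is satisfied by $A'$. I would also note that $\mal, \mbe \in \Sp(A')$. Any point of $\Sp(A') \setminus \Sp(A)$ that becomes equivalent to another point was a singleton class in $A$, and it falls under the argument above. I expect this bookkeeping, not any estimate, to be the only delicate step.
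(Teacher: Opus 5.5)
Your proposal is correct and takes essentially the paper's route: the paper simply declares the corollary an immediate consequence of Lemma \ref{lm-sep_conds}, and you supply the expected details, namely Lemma \ref{lm-sep_conds} for the forward direction, a direct computation for the converse, and Lemma \ref{lm-intersect_lin_funcs} to reduce the merging statement to the first part. Your reading of ``agree'' as equality up to a non-zero scalar, and your treatment of $\sim$ as an equivalence relation on all of $\K^n$ (so that $\mal,\mbe$ need not already lie in $\Sp(A)$), are sensible clarifications of what the paper leaves implicit.
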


The remainder of this section will be dedicated to determining how the
$\mal$-derivation space of a subalgebra $A \subseteq \Kx$ changes when we
kernel by subalgebra conditions which aren't $\mal$-derivations. We are going
to need a slightly more flexible notation for juggling derivation spaces of
multiple subalgebras simultaneously. If $A' \subseteq A$ is a subalgebra, we
write $\restr{\mathcal{D}_{\mal}(A)}{A'}$ for the space of functions
$\mathcal{D}_{\mal}(A)$ restricted to elements in $A'$.

\begin{lemma}\label{lm-dim_deriv_constant_kernel_outside_cluster}
	Let $A \subset \Kx$ be a subalgebra of finite codimension, $L$ a subalgebra
	condition over $A$, and $A' = A \cap \Ker(L)$. If $L$ is not an
	$\mal$-derivation, then 
	\[
		\dim\left(\restr{\mathcal{D}_{\mal}(A)}{A'}\right)
		=
		\dim(\mathcal{D}_{\mal}(A))
	\]
\end{lemma}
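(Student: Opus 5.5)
Restriction from $A$ to $A'$ is a linear map $\mathcal{D}_{\mal}(A) \to \restr{\mathcal{D}_{\mal}(A)}{A'}$, surjective by definition. It therefore suffices to show that it is injective: no non-zero $D \in \mathcal{D}_{\mal}(A)$ vanishes identically on $A'$. The plan is to suppose such a $D$ exists and derive a contradiction with the hypothesis that $L$ is not an $\mal$-derivation.

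So suppose $D \neq 0$ and $\restr{D}{A'} = 0$, that is, $\Ker(L) \subseteq \Ker(D)$ as subspaces of $A$. We may assume $L \neq 0$, since otherwise $A' = A$ and there is nothing to prove. Then $\Ker(L)$ has codimension $1$ in $A$. By Lemma \ref{lm-intersect_lin_funcs}, applied with $m = 1$, $V = A$ and $L_1 = L$, we get $D = cL$ for some $c \in \K$, and $c \neq 0$ because $D \neq 0$. Hence $L = c^{-1}D$.

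Now $\mathcal{D}_{\mal}(A)$ is a vector space, so $c^{-1}D$ is again an $\mal$-derivation on $A$. Thus $L$ is an $\mal$-derivation, contradicting the hypothesis. Therefore the restriction map is injective, hence an isomorphism, and the dimensions agree. Both spaces are finite dimensional by the corollary following Corollary \ref{cor-deriv_bound}, although injectivity alone already gives equality of dimensions.

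There is no real obstacle here. The only point needing care is the degenerate case $L = 0$, and whether the paper means ``not an $\mal$-derivation'' to exclude $L=0$. If $L = 0$, then $A' = A$ and the statement is trivial. The content of the lemma is simply that a codimension-one kernel determines its functional up to scalar. Lemma \ref{lm-sep_conds} and Corollary \ref{cor-deriv_disjoint} are not needed, though they explain why no subalgebra condition other than an $\mal$-derivation could arise this way.
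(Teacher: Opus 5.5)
Your proof is correct and is essentially the paper's argument. Both show that no nonzero $\mal$-derivation on $A$ can vanish on $A' = \Ker(L)$, because Lemma \ref{lm-intersect_lin_funcs} would make it a nonzero multiple of $L$ and so force $L$ to be an $\mal$-derivation; you phrase this as injectivity of the restriction map, while the paper phrases it as linear independence of the restrictions of a basis $D_1, \ldots, D_N$ (and your worry about $L = 0$ is moot, since the zero functional is itself an $\mal$-derivation and is therefore excluded by the hypothesis).
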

\begin{proof}
	Let $D_1, D_2, \ldots D_N \in \mathcal{D}_{\mal}(A)$
	be a vector space basis for $\mathcal{D}_{\mal}(A)$. Assume 
	towards a contradiction that the $D_i$ admit a non-trivial linear 
	dependency when restricted to $A'$,
	\[
		0 = \sum_{i = 1}^N a_i \restr{D_i}{A'}.
	\]
	Then by Lemma \ref{lm-intersect_lin_funcs}, we have 
	\[
		\sum_{i = 1}^N a_i D_i = cL
	\]
	for some non-zero scalar $c \in \K$. This is a contradiction as the
	expression on the left is a non-zero $\mal$-derivation, and the expression
	on the right is not.
	
\end{proof}

We can use the previous lemma along with Theorem \ref{th-dim_D} to prove a
lemma which will be one of our main technical tools for the remainder of this
section. 

\begin{lemma}\label{lm-look_at_M2}
	Let $A \subset \Kx$ be a subalgebra of finite codimension, and $L$ a
	subalgebra condition over $A$ which is not an $\mal$-derivation. Denote $A'
		= A \cap \Ker(L)$. Then 
	\[
		\dim\left(
		\mathcal{D}_{\mal}(A')
		/
		\restr{\mathcal{D}_{\mal}(A)}{A'}
		\right)
		=
		\dim\left( \mathfrak{m}_{\mal}(A)^2
		/
		\mathfrak{m}_{\mal}(A')^2
		\right) - 1.
	\]
\end{lemma}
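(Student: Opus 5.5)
We translate both sides into dimensions via Theorem \ref{th-dim_D}. Write $M = \mathfrak{m}_{\mal}(A)$ and $M' = \mathfrak{m}_{\mal}(A')$. Then
\[
\dim \mathcal{D}_{\mal}(A) = \dim(M/M^2), \qquad \dim \mathcal{D}_{\mal}(A') = \dim(M'/M'^2).
\]
Both are finite since $A$ and $A'$ have finite codimension. Lemma \ref{lm-dim_deriv_constant_kernel_outside_cluster} says the restriction map $\mathcal{D}_{\mal}(A) \to \mathcal{D}_{\mal}(A')$ is injective, because $L$ is not an $\mal$-derivation. The left-hand side is therefore $\dim(M'/M'^2) - \dim(M/M^2)$, and it suffices to prove
\[
\dim(M'/M'^2) - \dim(M/M^2) = \dim(M^2/M'^2) - 1 .
\]

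Next we compare $M$ and $M'$. We have $M' = M \cap \Ker(L)$, so $M'$ has codimension $0$ or $1$ in $M$. We claim it is exactly $1$. If $L$ vanished on $M$, then since $A = \K \oplus M$ we would get $L = c\cdot \mathrm{ev}_{\mal}$, which is a character rather than a subalgebra condition unless $c=0$. A character $f\mapsto c f(\mal)$ with $c\neq 0$ has kernel not containing $1$, so it is not a subalgebra condition; hence $L$ would be zero. For $L=0$, $A'=A$, and both sides should be read with the degenerate case excluded (the paper's $L$ is nonzero). So $\dim(M/M') = 1$.

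Now use the chain $M'^2 \subseteq M^2 \subseteq M$ together with $M'^2 \subseteq M' \subseteq M$. This gives
\[
\dim(M/M'^2) = \dim(M/M') + \dim(M'/M'^2) = 1 + \dim(M'/M'^2),
\]
and also $\dim(M/M'^2) = \dim(M/M^2) + \dim(M^2/M'^2)$. These quantities are finite: $M^2$ contains $\mathfrak{m}_{\mal}(A)$ times a finite-codimension piece, and more simply $M'^2$ has finite codimension in $M$ because $A'$ has finite codimension and $M'$ is a finite-codimension ideal-like subspace. Equating the two expressions gives exactly the required identity.

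The main obstacle is justifying the finite-dimensionality of $M/M'^2$, so that the subtractions are legitimate. We handle it by noting that $M'/M'^2 \cong \mathcal{D}_{\mal}(A')^{*}$ is finite-dimensional by the corollary following Corollary \ref{cor-deriv_bound}, and that $\dim(M/M')=1$. Hence $\dim(M/M'^2)<\infty$, and all the other terms are finite as subquotients.
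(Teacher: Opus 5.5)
Your proof is correct and follows essentially the paper's argument. Theorem \ref{th-dim_D} and Lemma \ref{lm-dim_deriv_constant_kernel_outside_cluster} reduce the claim to $\dim(M'/M'^2)-\dim(M/M^2)=\dim(M^2/M'^2)-1$; you get this by computing $\dim(M/M'^2)$ along two chains, where the paper subtracts codimensions in $\Kx$. You also make explicit two facts the paper uses tacitly, namely $\dim(M/M')=1$ and $\dim(M/M'^2)<\infty$; your final paragraph, not the garbled sentence about $M^2$ before it, is the right justification of finiteness.

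The case $L=0$ needs no separate convention. The zero functional is itself an $\mal$-derivation, so the hypothesis already excludes it.
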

\begin{proof}
	Lemma \ref{lm-dim_deriv_constant_kernel_outside_cluster} and Theorem
	\ref{th-dim_D} yield
	\begin{align*}
		\dim\left(
		\mathcal{D}_{\mal}(A')
		/
		\restr{\mathcal{D}_{\mal}(A)}{A'}
		\right)
		= & 
		\dim(\mathcal{D}_{\mal}(A'))
		-
		\dim(\restr{\mathcal{D}_{\mal}(A)}{A'}) \\
		= & 
		\dim(\mathcal{D}_{\mal}(A'))
		-
		\dim(\mathcal{D}_{\mal}(A))             \\
		= & 
		\dim\left(
		\mathfrak{m}_{\mal}(A')/ \mathfrak{m}_{\mal}(A')^2
		\right)
		-
		\dim\left(
		\mathfrak{m}_{\mal}(A)/ \mathfrak{m}_{\mal}(A)^2
		\right)                                 \\
		= & 
		\codim\left(\mathfrak{m}_{\mal}(A')^2\right)
		-
		\codim(\mathfrak{m}_{\mal}(A'))         \\
		  & -
		\codim\left(\mathfrak{m}_{\mal}(A)^2\right)
		+
		\codim(\mathfrak{m}_{\mal}(A))          \\
		= & 
		\codim\left(\mathfrak{m}_{\mal}(A')^2\right)
		-
		\codim\left(\mathfrak{m}_{\mal}(A)^2\right)
		- 1                                     \\
		= & 
		\dim\left(\mathfrak{m}_{\mal}(A)^2 / \mathfrak{m}_{\mal}(A')^2\right)
		- 1.
	\end{align*}
	where we take codimensions in $\Kx$.
	
\end{proof}

We can specify the previous lemma further. Consider two polynomials $f_1, f_2
	\in \mathfrak{m}_{\mal}(A)$ and let $g$ be a polynomial in
$\mathfrak{m}_{\mal}(A) \setminus \mathfrak{m}_{\mal}(A')$. Then
$\mathfrak{m}_{\mal}(A) = \mathfrak{m}_{\mal}(A') \oplus g \K$ since
$\codim(\mathfrak{m}_{\mal}(A)) = \codim(\mathfrak{m}_{\mal}(A')) + 1$, and we
can write $f_i = h_i + a_i g$ for some $h_i \in \mathfrak{m}_{\mal}(A')$ and
$a_i \in \K$. Moreover, 
\[
	f_1f_2
	=
	h_1h_2 + (a_2h_1 + a_1h_2 + a_1a_2 g) g,
\]
and any polynomial in $\mathfrak{m}_{\mal}(A)^2$ is a linear combination of such products.
Thus any polynomial in $\mathfrak{m}_{\mal}(A)^2$ is congruent to a product $f g$ for some
$f \in \mathfrak{m}_{\mal}(A)$ when taken modulo $\mathfrak{m}_{\mal}(A')^2$. We summarize our
result.

\begin{lemma}\label{lm-look_at_gM}
	Using notation as in Lemma \ref{lm-look_at_M2} and letting $g \in
		\mathfrak{m}_{\mal}(A) \setminus \mathfrak{m}_{\mal}(A')$ we have
	\[
		\dim\left(
		\mathcal{D}_{\mal}(A')
		/
		\restr{\mathcal{D}_{\mal}(A)}{A'}
		\right)
		=
		\dim\left(
		\left(
			g \mathfrak{m}_{\mal}(A) + \mathfrak{m}_{\mal}(A')^2 
			\right)
		/
		\mathfrak{m}_{\mal}(A')^2 
		\right) - 1.
	\]
\end{lemma}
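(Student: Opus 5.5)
The statement follows from Lemma \ref{lm-look_at_M2} once we identify the quotient $\mathfrak{m}_{\mal}(A)^2/\mathfrak{m}_{\mal}(A')^2$ with the quotient on the right-hand side. So the plan is to show the equality of subspaces
\[
	\mathfrak{m}_{\mal}(A)^2 = g\,\mathfrak{m}_{\mal}(A) + \mathfrak{m}_{\mal}(A')^2
\]
and then substitute it into the formula of Lemma \ref{lm-look_at_M2}.

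We first record two preliminary facts.
\begin{enumerate}
	\item $\mathfrak{m}_{\mal}(A')^2 \subseteq \mathfrak{m}_{\mal}(A)^2$, since $\mathfrak{m}_{\mal}(A') \subseteq \mathfrak{m}_{\mal}(A)$. This makes both quotients well defined.
	\item $\mathfrak{m}_{\mal}(A')$ has codimension exactly one in $\mathfrak{m}_{\mal}(A)$, so that $\mathfrak{m}_{\mal}(A) = \mathfrak{m}_{\mal}(A') \oplus g\K$. Indeed, $\mathfrak{m}_{\mal}(A') = \mathfrak{m}_{\mal}(A) \cap \Ker(L)$, so the codimension is at most one. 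It is exactly one because, by hypothesis, $g \in \mathfrak{m}_{\mal}(A) \setminus \mathfrak{m}_{\mal}(A')$.
\end{enumerate}

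For the inclusion ``$\supseteq$'': $g\,\mathfrak{m}_{\mal}(A)$ consists of products of two elements of $\mathfrak{m}_{\mal}(A)$, hence lies in $\mathfrak{m}_{\mal}(A)^2$. Together with the first fact, the right-hand side is contained in $\mathfrak{m}_{\mal}(A)^2$.

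For the inclusion ``$\subseteq$'', we use the computation given just before the statement. Any element of $\mathfrak{m}_{\mal}(A)^2$ is a linear combination of products $f_1f_2$ with $f_i \in \mathfrak{m}_{\mal}(A)$. Using the second fact, write $f_i = h_i + a_i g$ with $h_i \in \mathfrak{m}_{\mal}(A')$ and $a_i \in \K$. Then
\[
	f_1f_2 = h_1h_2 + \left(a_2h_1 + a_1h_2 + a_1a_2 g\right) g .
\]
Here $h_1h_2 \in \mathfrak{m}_{\mal}(A')^2$, and the bracketed factor lies in $\mathfrak{m}_{\mal}(A)$, so the second summand lies in $g\,\mathfrak{m}_{\mal}(A)$. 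Both sides are vector spaces, so linearity gives the inclusion for all of $\mathfrak{m}_{\mal}(A)^2$.

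With the subspace equality established, the two quotients by $\mathfrak{m}_{\mal}(A')^2$ are equal, in particular of equal dimension. Substituting into Lemma \ref{lm-look_at_M2} yields the claimed formula.

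There is no serious obstacle. The only point needing care is the second preliminary fact, which is what makes the decomposition $f_i = h_i + a_i g$ available.
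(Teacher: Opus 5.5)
Your proof is correct and follows essentially the same route as the paper. The paper likewise writes $\mathfrak{m}_{\mal}(A) = \mathfrak{m}_{\mal}(A') \oplus g\K$ and uses the identity $f_1f_2 = h_1h_2 + (a_2h_1 + a_1h_2 + a_1a_2g)g$ to show that $\mathfrak{m}_{\mal}(A)^2$ is congruent to $g\,\mathfrak{m}_{\mal}(A)$ modulo $\mathfrak{m}_{\mal}(A')^2$, then substitutes into Lemma \ref{lm-look_at_M2}; you additionally spell out the reverse inclusion and why the codimension is exactly one, which the paper leaves implicit.
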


It will often be more convenient to show dimensional equalities by counting
leading terms, and we will often use the previous lemma in conjunction with the
following adaptation of Macaulay's Basis Theorem.

\begin{lemma}\label{lm-eq_dim_lm}
	Let $B \subseteq A \subseteq \Kx$ be vector spaces of polynomials such that
	$\dim(A/B)$ is finite. Then 
	\[
		\dim(A/B) = |\Lm(A) \setminus \Lm(B)|
	\]
\end{lemma}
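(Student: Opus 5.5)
The plan is to reduce the statement to a basis count. Fix the term order used to define $\Lm$. Let $\Lm(A)$ denote the set of leading monomials of nonzero elements of $A$, and similarly for $B$. Since $B \subseteq A$, we have $\Lm(B) \subseteq \Lm(A)$. For each monomial $\mathbf{m} \in \Lm(A) \setminus \Lm(B)$, choose $f_{\mathbf{m}} \in A$ with $\lm(f_{\mathbf{m}}) = \mathbf{m}$. The claim is that the classes of the $f_{\mathbf{m}}$ form a basis of $A/B$. This gives the equality, and it also shows that $\Lm(A) \setminus \Lm(B)$ is finite, because $\dim(A/B)$ is.

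For linear independence, suppose a nontrivial finite combination $h = \sum c_{\mathbf{m}} f_{\mathbf{m}}$ lies in $B$. The leading monomials of the $f_{\mathbf{m}}$ are pairwise distinct, so no cancellation occurs at the top. Hence $\lm(h)$ is the largest $\mathbf{m}$ with $c_{\mathbf{m}} \neq 0$. In particular $h \neq 0$, and $\lm(h) \in \Lm(B)$, which contradicts $\mathbf{m} \notin \Lm(B)$. This step also shows that any finite subset of $\Lm(A)\setminus\Lm(B)$ has size at most $\dim(A/B)$, so the set is finite.

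Spanning is the only step that needs care, since a term order is a well-order and a naive reduction argument must be shown to terminate. Suppose the span $S$ of $B$ together with the $f_{\mathbf{m}}$ is not all of $A$. Among elements of $A \setminus S$, pick $f$ with minimal leading monomial; this is possible because a term order on monomials is a well-order, by Dickson's lemma. Now split into two cases.
\begin{enumerate}
\item If $\lm(f) \in \Lm(B)$, take $b \in B$ with the same leading term as $f$.
\item Otherwise $\lm(f) = \mathbf{m}$ for some $\mathbf{m} \in \Lm(A) \setminus \Lm(B)$; take a scalar multiple of $f_{\mathbf{m}}$ with the same leading term as $f$.
\end{enumerate}
In either case, subtracting produces an element of $A \setminus S$ that is either zero or has strictly smaller leading monomial. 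It cannot be zero, since then $f \in S$. A smaller leading monomial contradicts minimality. Therefore $S = A$.

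Together, independence and spanning give $\dim(A/B) = |\Lm(A)\setminus\Lm(B)|$. The only real subtlety is the well-ordering used in the spanning step; the rest is routine leading-term bookkeeping.
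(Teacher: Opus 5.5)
Your proof is correct and takes essentially the same route as the paper: choose one element of $A$ for each monomial in $\Lm(A)\setminus\Lm(B)$, get independence modulo $B$ from the distinct leading monomials, and get spanning from the fact that a term order is a well-order. The only differences are cosmetic: you phrase spanning as a minimal-counterexample argument where the paper runs an explicit reduction until it terminates, and you additionally spell out why $\Lm(A)\setminus\Lm(B)$ is finite.
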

\begin{proof}
    Our proof will closely follow that of Macaulay's Basis Theorem given in
    \cite[Theorem 1.5.7]{kreuzerrobbiano1}. \\

    Let $F$ be a subset of $A$ such that $\Lm(F) = \Lm(A) \setminus \Lm(B)$,
    and such that all leading monomials of elements of $F$ are distinct. Then
    \[
        \vert F \vert = \vert \Lm(F) \vert = \vert \Lm(A) \setminus \Lm(B) \vert,
    \]
    and it remains to show that $\vert F \vert = \dim(A/B)$. We will do so by
    showing that the residue classes of the elements of $F$ form a basis of
    $A/B$. \\ 

    We first show linear independence. Since the elements of $F$ have distinct
    leading monomials, they are linearly independent. Moreover, as $\Lm(F)$ is
    disjoint from $\Lm(B)$, any non-trivial linear combination of elements of
    $F$ cannot lie in $B$. Thus the elements of $F$ remain linearly independent
    modulo $B$. \\

    Now let $a_{0} \in A$. Them $\lm(a_{0})$ lies in either $\Lm(F)$ or
    $\Lm(B)$. In either case, there exists a polynomial $f_{0} \in \langle F
    \rangle + B$ such that $\lm(a_{0} + f_{0}) < \lm(f_{0})$. Now let $a_{1} =
    a_{0} + f_{0}$. As before, we may find $f_{1} \in \langle F \rangle + B$ such
    that $a_{2} = a_{1} + f_{1}$ has smaller lead monomial than $a_{1}$.
    Repeating this procedure, we obtain a decreasing sequence of lead monomials
    $\lm(a_i), i \in \N$. As any term order is a well-order, such a sequence
    must stabilize, and there exists $N \in \N$ such that $a_N = 0$. We have
    obtained $a_{0} = f_{0} + \ldots + f_N \in \langle F \rangle + B$, and as
    $a_{0} \in A$ was arbitrary, we have $\langle F \rangle + B = A$. Hence the
    residue classes of the elements in $F$ span the quotient space $A/B$ and we
    are done. 
\end{proof}

We are now ready to prove the first difficult statement regarding how
$\mal$-derivation spaces change as we apply subalgebra conditions. 

\begin{theorem}\label{th-conditions_on_other_clusters}
	Let $A \subset \Kx$ be a subalgebra of finite codimension where
	$\mga \not \sim \mal, \mbe$. Let $L$ be an $\mal,
		\mbe$-subalgebra condition,
	and $A' = A \cap \Ker(L)$. Then 
	\[
		\mathcal{D}_{\mga}(A')
		=
		\restr{\mathcal{D}_{\mga}(A)}{A'}
	\]
\end{theorem}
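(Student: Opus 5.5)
We plan to use Lemma \ref{lm-look_at_gM}, applied at the point $\mga$. First, $L$ is not a $\mga$-derivation. If it were, then by Lemma \ref{lm-sep_conds} (taking $\mga=\mde$) we would get $\mal \sim \mga$ or $\mbe\sim\mga$, contradicting the hypothesis. We may assume $L \neq 0$; otherwise there is nothing to prove. Then Lemma \ref{lm-dim_deriv_constant_kernel_outside_cluster} shows that restriction is injective on $\mathcal{D}_{\mga}(A)$, and $\restr{\mathcal{D}_{\mga}(A)}{A'} \subseteq \mathcal{D}_{\mga}(A')$ holds trivially. So it suffices to prove that the quotient has dimension $0$. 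By Lemma \ref{lm-look_at_gM}, this amounts to showing that, for a suitable $g \in \mathfrak{m}_{\mga}(A)\setminus\mathfrak{m}_{\mga}(A')$,
\[
\dim\left(\left(g\,\mathfrak{m}_{\mga}(A)+\mathfrak{m}_{\mga}(A')^2\right)/\mathfrak{m}_{\mga}(A')^2\right)=1,
\]
that is, $g\,\mathfrak{m}_{\mga}(A) \subseteq \K g^2 + \mathfrak{m}_{\mga}(A')^2$.

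The key step is a good choice of $g$. Since $\mga\not\sim\mal$ and $\mga \not\sim \mbe$, we can pick $p,q\in A$ with $p(\mga)\ne p(\mal)$ and $q(\mga)\neq q(\mbe)$. After subtracting constants we may take $p(\mga)=q(\mga)=0$ and $p(\mal)=q(\mbe)=1$. Then $e=pq\in\mathfrak{m}_{\mga}(A)$ satisfies $e(\mal)=e(\mbe)=1$, or a suitable product/combination achieves this; we will adjust as needed. Now take any $h\in A$ with $L(h)\ne0$, and set $g = eh$ or a similar product. The Leibniz rule gives $L(eh)=e(\mal)L(h)+h(\mbe)L(e)$. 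We would first try to arrange $L(e)=0$, for instance by replacing $e$ with $e-\frac{L(e)}{L(k)}k$ for some $k$ vanishing at $\mga,\mal,\mbe$ with $L(k) \neq 0$, and then $L(g)\neq 0$. The point of this choice is to use $g$ of the form $g = e\cdot h$ with $e\in\mathfrak{m}_{\mga}(A')$, $e$ "$\approx 1$" at $\mal,\mbe$.

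Next we show $g f\in \mathfrak{m}_{\mga}(A')^2$ modulo $\K g^2$ for every $f \in \mathfrak{m}_{\mga}(A)$. Write $f=f_0+cg$ with $f_0\in\mathfrak{m}_{\mga}(A')$, so $gf \equiv gf_0 \pmod{\K g^2}$. The task reduces to showing $gf_0 = e h f_0$ lies in $\mathfrak{m}_{\mga}(A')^2 + \K g^2$. The natural factorization is $(e)\cdot(hf_0)$. Here $e\in\mathfrak{m}_{\mga}(A')$, and $hf_0\in\mathfrak{m}_{\mga}(A)$, but $hf_0$ need not lie in $A'$. We correct this: $hf_0 - \lambda g\in A'$ for $\lambda = L(hf_0)/L(g)$, and it vanishes at $\mga$. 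Hence $gf_0 \equiv \lambda e g \pmod{\mathfrak{m}_{\mga}(A')^2}$. It remains to handle $eg = e^2h$. Similarly $eh - \mu g\in \mathfrak{m}_{\mga}(A')$ with $\mu = 1$, and it is in fact $0$. This is circular, so instead we use two factors of $e$: take $g=e^2h$ from the start, or more generally iterate. Each term $e\cdot(\text{something in }\mathfrak{m}_{\mga}(A))$ reduces modulo $\mathfrak{m}_{\mga}(A')^2$ to a multiple of $e g$, then $e^2 g$, and so on. Because $A$ has finite codimension, these terms eventually fall into $\mathfrak{m}_{\mga}(A')^2$: a high power of $e$ times $g$ can be factored into two elements each corrected into $A'$ using $L(e)=0$ and the Leibniz rule. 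Alternatively, the terms are a multiple of $g^2$.

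The main obstacle is exactly this last bookkeeping: producing $g$ so that $g\,\mathfrak{m}_{\mga}(A)$ collapses to $\K g^2$ modulo $\mathfrak{m}_{\mga}(A')^2$. If the direct factorization fails, a fallback is to count leading monomials via Lemma \ref{lm-eq_dim_lm} on $\mathfrak{m}_{\mga}(A)^2/\mathfrak{m}_{\mga}(A')^2$ and show it has dimension $1$, as Lemma \ref{lm-look_at_M2} requires. A second fallback is to argue by contradiction: a new $\mga$-derivation $D$ on $A'$ would, by Theorem \ref{th-gorin} and Corollary \ref{cor-deriv_disjoint}, force a condition mixing the clusters of $\mga$ and $\mal$.
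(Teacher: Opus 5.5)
\textbf{There is a genuine gap, at the decisive step.} Several pieces are fine: the reduction to Lemma \ref{lm-look_at_gM} (including the check that $L$ is not a $\mga$-derivation), the construction of $e\in\mathfrak{m}_{\mga}(A')$ with $e(\mal)=e(\mbe)=1$, the choice $g=eh$, and the congruence $gf_0\equiv\lambda\,eg \pmod{\mathfrak{m}_{\mga}(A')^2}$. But the argument stops exactly where the work lies, and the way you propose to finish it fails.

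First, your target $g\,\mathfrak{m}_{\mga}(A)\subseteq \K g^2+\mathfrak{m}_{\mga}(A')^2$ is not equivalent to the quotient having dimension $1$, and for your $g$ it can be false. Your own correction trick applied to $g^2=e\,(eh^2)$ gives $eh^2-(h(\mal)+h(\mbe))g\in\mathfrak{m}_{\mga}(A')$. Hence $g^2\equiv (h(\mal)+h(\mbe))\,eg$. When $h(\mal)+h(\mbe)=0$ you get $g^2\in\mathfrak{m}_{\mga}(A')^2$, while $eg\in g\,\mathfrak{m}_{\mga}(A)$ has $L(eg)=L(g)\neq 0$. A concrete case: $n=1$, $A=\K[x]$, $\mga=1$, $\mal=\mbe=0$, $L(f)=f'(0)$, $e=1-x^2$, $h=x$.

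Second, the iteration cannot help. Since $L(e)=0$ and $e(\mal)=1$, the Leibniz rule gives $L(e^kg)=L(g)\neq 0$ for every $k$. So no $e^kg$ ever lies in $A'$, let alone in $\mathfrak{m}_{\mga}(A')^2$, and each reduction step just returns $eg\equiv eg$. The two fallbacks you list are pointers, not arguments.

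\textbf{The fix is already within your setup.} Let $eg$, not $g^2$, be the spanning class, and apply the correction to every $f\in\mathfrak{m}_{\mga}(A)$ without first splitting off $cg$. Since $hf\in\mathfrak{m}_{\mga}(A)$, we have $hf-\lambda_f g\in\mathfrak{m}_{\mga}(A')$ with $\lambda_f=L(hf)/L(g)$. Because $e\in\mathfrak{m}_{\mga}(A')$,
\[
gf \;=\; e\,(hf-\lambda_f g)+\lambda_f\,eg \;\equiv\; \lambda_f\,eg \pmod{\mathfrak{m}_{\mga}(A')^2}.
\]
So the quotient in Lemma \ref{lm-look_at_gM} has dimension at most $1$, which is all that lemma needs.

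Also take $e=1-(1-p)(1-q)$ rather than $pq$. It has the required values, and $L(e)=0$ follows directly from the Leibniz rule. This makes your auxiliary $k$ unnecessary, which matters because in the character-difference case $k$ cannot exist: $k(\mal)=k(\mbe)=0$ forces $L(k)=0$.

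\textbf{Comparison with the paper.} With this repair your route is genuinely different from the paper's, and shorter. The paper takes $g$ of minimal degree and finds $h\in\mathfrak{m}_{\mga}(A')$ with $h(\mbe)\neq 0$, which plays the role of your $e$. It then counts leading monomials via Lemma \ref{lm-eq_dim_lm}. The cross-multiplication $(fg-f(\mbe)g)h-(hg-h(\mbe)g)f$ disposes of every $\lm(fg)$ except $\lm(gh)$ and $\lm(g^2)$, and a separate computation with squares ties those two together. Making $g$ divisible by $e$ collapses all of this into the single identity above.
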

\begin{proof}
    Let $g$ be an element in $\mathfrak{m}_{\mga}(A) \setminus
    \mathfrak{m}_{\mga}(A')$ of minimal degree. Furthermore, after dividing $g$
    by $L(g)$ we may suppose that $L(g) = 1$. By Lemmas \ref{lm-look_at_gM} and
    \ref{lm-eq_dim_lm}, we will be done if we can show that 
	\[
		\left|
		\Lm(g \mathfrak{m}_{\mga}(A) + \mathfrak{m}_{\mga}(A')^2)
		\setminus
		\Lm(\mathfrak{m}_{\mga}(A')^2)
		\right|
		\leq
		1,
	\]
	or equivalently that
	\[
		\left|
		\Lm(g \mathfrak{m}_{\mga}(A))
		\setminus
		\Lm(\mathfrak{m}_{\mga}(A')^2)
		\right|
		\leq
		1.
	\]
	Thus, our objective will be to show that all but one of the leading
	monomials which appear in $g \mathfrak{m}_{\mga}(A)$ also appear in
	$\mathfrak{m}_{\mga}(A')^2$. \\
	
	Let $\hat{B}$ be a vector space basis for $\mathfrak{m}_{\mga}(A)$ that
	contains $g$ where no polynomials in $\hat{B}$ have the same leading
	monomial. Note that if $f \in \hat{B}$, then $f - L(f)g \in
		\mathfrak{m}_{\mga}(A')$ and $\lm(f - L(f)g) = \lm(f)$ since if $\lm(g) >
		\lm(f)$ we have $L(f) = 0$ by our choice of $g$. Use $\hat{B}$ to construct 
	\[
		B = \{f - L(f)g : f \in \hat{B} \setminus \{g\}\} \cup \{g\}.
	\]
	Then $B$ is a vector space basis for $\mathfrak{m}_{\mga}(A)$ where all
	elements have distinct leading terms and $B \setminus \{g\}$ is a vector
	space basis for $\mathfrak{m}_{\mga}(A')$. \\
	
	We claim that there exists $h \in B \setminus \{g\}$ such that $h(\mbe)
		\not = 0$. To see this, first note that $L$ is not a $\mga, \mbe$-character
	difference by Corollary \ref{cor-no-subalg-cond-is-deriv-and-diff}. It then
	follows from Lemma \ref{lm-intersect_lin_funcs} that $\mga \not \sim \mbe$
	in $A'$. Then as $B \setminus \{g\}$ is a vector space basis for
	$\mathfrak{m}_{\mga}(A')$, we must be able to find such $h \in B \setminus
		\{g\}$ such that $h(\mbe) \not = 0$. \\
	
	Now let $h$ be the minimal polynomial in $B \setminus \{g\}$ with respect
	to the given term order such that $h(\mbe) \not = 0$, and let $f$ be an
	arbitrary element in $B \setminus \{g, h\}$. Then we have the following
	inclusions
	\begin{align*}
		h & \in \mathfrak{m}_{\mga}(A'),  \\
		f & \in \mathfrak{m}_{\mga}(A'),
	\end{align*}
	and,
	\begin{align*}
		fg - \frac{L(fg)}{L(g)}g 
		= fg - \frac{L(f)g(\mal) + L(g)f(\mbe)}{L(g)}g
		= fg - f(\mbe) g & \in \mathfrak{m}_{\mga}(A'),  \\	
		hg - \frac{L(hg)}{L(g)}g 
		= hg - \frac{L(h)g(\mal) + L(g)h(\mbe)}{L(g)}g
		= hg - h(\mbe) g & \in \mathfrak{m}_{\mga}(A').
	\end{align*}
	Using these inclusions we see that 
	\begin{align*}
		\left(
		fg - f(\mbe)g
		\right)h
		 & =
		fgh - f(\mbe)gh
		\in 
		\mathfrak{m}_{\mga}(A')^2, \\	
		\left(
		hg - h(\mbe)g
		\right)f
		 & =
		fgh - h(\mbe)gf
		\in 
		\mathfrak{m}_{\mga}(A')^2,
	\end{align*}
	but then it follows that 
	\[
		f(\mbe)gh - h(\mbe)gf
		\in 
		\mathfrak{m}_{\mga}(A')^2 \cap g \mathfrak{m}_{\mga}(A),
	\]
    and $\lm(f(\mbe)hg - h(\mbe)fg) = \lm(fg)$ since either $f > h \Rightarrow
    fg > hg$ or $f < h$ which means that $f(\mbe) = 0$ due to how we picked
    $h$. Thus, we have shown that $\Lm(\mathfrak{m}_{\mga}(A')^2 \cap g
    \mathfrak{m}_{\mga}(A))$ contains all such elements $\lm(fg)$, i.e all
    elements of $\Lm(g \mathfrak{m}_{\mga}(A))$ except possibly either of
    $\lm(gh), \lm(g^2)$. We can find the last missing lead monomial in a manner
    similar to that above by noting that $g^2 - (g(\mal) + g(\mbe))g \in
    \mathfrak{m}_{\mga}(A')$ and 
	\begin{align*}
		h^2(g^2 - (g(\mal) + g(\mbe))g)
		 & =
		h^2g^2 - (g(\mal) + g(\mbe))h^2g 
		\in \mathfrak{m}_{\mga}(A')^2 \\
		(hg - h(\mbe)g)^2
		 & =
		h^2g^2 - 2h(\mbe) hg^2 + h^2(\mbe)g^2
		\in \mathfrak{m}_{\mga}(A')^2,
	\end{align*}
	so 
	\begin{align*}
		2h(\mbe) hg^2 
		- (g(\mal) + g(\mbe))h^2g 
		- h^2(\mbe)g^2 \in \mathfrak{m}_{\mga}(A')^2.
	\end{align*}
	We also have 
	\begin{align*}
		h(hg - h(\mbe)g) 
		 & =
		h^2g - h(\mbe)hg
		\in \mathfrak{m}_{\mga}(A')^2, \\	
		h(g^2 - (g(\mal) + g(\mbe))g) 
		 & =
		hg^2 - (g(\mal) + g(\mbe))hg
		\in \mathfrak{m}_{\mga}(A')^2, 
	\end{align*}
	and combing the results we see that 
	\begin{align*}
		2h(\mbe)(g(\mal) + g(\mbe))hg 
		-
		h(\mbe)(g(\mal) + g(\mbe))hg 
		-
		h^2(\mbe)g^2 
		 & =                                                           \\
		h(\mbe)(g(\mal) + g(\mbe))hg 
		-
		h^2(\mbe)g^2 
		 & \in \mathfrak{m}_{\mga}(A')^2 \cap g \mathfrak{m}_{\mga}(A)
	\end{align*}
	Whether or not $g(\mal) + g(\mbe) = 0$ does not matter, the leading monomial
	of the polynomial above is either $\lm(hg)$ or $\lm(g^2)$, whence we see
	that
	\[
		\dim\left(
		\Lm(g \mathfrak{m}_{\mga}(A))
		\setminus
		\Lm(\mathfrak{m}_{\mga}(A')^2 \cap g\mathfrak{m}_{\mga}(A))
		\right)
		\leq
		1
	\]
	and we are done.
	
\end{proof}

If we combine the previous theorem with Lemma \ref{lm-Kx_derivations}, we
immediately get the following promised result regarding trivial derivations.

\begin{corollary}\label{cor-trivial_derivations}
	Let $A \subset \K[x]$ be a subalgebra of finite codimension 
	such that $\mal \not \in \Sp(A)$. Then $\mathcal{D}_{\mal}$
	is spanned by the $\mal$-derivations 
	\[
		D_i(f) = f'_{x_i}(\mal)
	\]
	for $x_i \in \bm{x}$.
\end{corollary}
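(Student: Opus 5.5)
The plan is to follow a filtration from $\Kx$ down to $A$ and track $\mathcal{D}_{\mal}$ along it, using Theorem \ref{th-conditions_on_other_clusters} at every step. By Theorem \ref{th-gorin} (applied inductively), we choose a filtration
\[
	A = A_m \subsetneq A_{m-1} \subsetneq \ldots \subsetneq A_0 = \Kx,
\]
where $A_{i+1} = A_i \cap \Ker(L_i)$ and $L_i$ is an $\mal_i, \mbe_i$-subalgebra condition on $A_i$. The target is the chain of equalities
\[
	\mathcal{D}_{\mal}(A_{i+1}) = \restr{\mathcal{D}_{\mal}(A_i)}{A_{i+1}}.
\]
Iterating these gives $\mathcal{D}_{\mal}(A) = \restr{\mathcal{D}_{\mal}(\Kx)}{A}$. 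Lemma \ref{lm-Kx_derivations} then says this space is spanned by the restrictions of $D_i : f \mapsto f'_{x_i}(\mal)$, which is the claim.

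To apply Theorem \ref{th-conditions_on_other_clusters} at step $i$, we need $\mal \not\sim \mal_i$ and $\mal \not\sim \mbe_i$ in $A_i$. Note that $\mal$ is generally not in $\Sp(A_i)$, so $\sim$ should be read as the relation $f(\mal)=f(\mbe)$ for all $f\in A_i$. The proof of that theorem only uses this relation, together with $\mal \not\sim \mbe$ in $A'$ to find $h$, and not membership of $\mal$ in the spectrum.

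The key observation for these hypotheses is monotonicity. Since $A \subseteq A_i$, any equality $f(\mal) = f(\mga)$ holding for all $f \in A_i$ also holds for all $f \in A$. Likewise, a direction $\bm{u}$ with $f'_{\bm{u}}(\mal)=0$ for all $f\in A_i$ also works for all $f \in A$. Hence $\Sp(A_i) \subseteq \Sp(A)$ in the relevant sense, and if $\mal \sim \mga$ in $A_i$ for some $\mga \neq \mal$, then $\mal \in \Sp(A)$, a contradiction. Therefore $\mal$ is equivalent in $A_i$ to no other point. Similarly $\mal \ne \mal_i, \mbe_i$: an $\mal$-derivation $L_i$, or a character difference involving $\mal$, would put $\mal$ into $\Sp(A_{i+1}) \subseteq \Sp(A)$. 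For the derivation case, note that $L_i \neq 0$ on $A_i$; this gives an $\mal$-derivation on $A_i$ vanishing on $A_{i+1}$. By Lemma \ref{lm-Kx_derivations}, combined inductively with what we are proving, it has the form $\sum u_j f'_{x_j}(\mal)$, so $f'_{\bm u}(\mal)=0$ on $A_{i+1}$ with $\bm{u} \ne 0$.

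The main obstacle is checking that the proof of Theorem \ref{th-conditions_on_other_clusters} really goes through when $\mga = \mal$ is a non-spectral point, since its statement is phrased via clusters of $\Sp(A)$. I would handle this by rereading it with $\sim$ taken as the pointwise relation on arbitrary points of $\K^n$. The one delicate spot is the claim that $\mga \not\sim \mbe$ in $A'$. That claim follows because $\mal \notin \Sp(A)$ forces $\mal \not\sim \mbe$ even in $A \subseteq A'$. Finally, the induction is organized so that at step $i$ the derivation space $\mathcal{D}_{\mal}(A_i)$ is already known to be spanned by the $D_j$; this is exactly what the derivation-case argument above needs.
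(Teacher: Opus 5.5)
Your proposal is correct and takes the same route as the paper, which obtains this corollary in one line by iterating Theorem \ref{th-conditions_on_other_clusters} along a filtration from $\Kx$ down to $A$ and then applying Lemma \ref{lm-Kx_derivations}. Your extra checks are sound and are exactly the details the paper leaves implicit: the monotonicity $\Sp(A_i)\subseteq\Sp(A)$, the induction ruling out $\mal$ as an evaluation point of any $L_i$ (which also shows $L_i$ is never a nonzero $\mal$-derivation), and the pointwise reading of $\sim$ for the non-spectral point $\mal$.
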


With the previous corollary in hand, we can now further solidify the idea that
the subalgebra spectrum consists of all points of evaluation that appear in the
subalgebra conditions which define $A$. To fully resolve this, we need the Main
Theorem (Theorem \ref{th-main}). What we can show now however is that $\mal \in
	\Sp(A)$ if and only if some $\mal$-derivation, or $\mal, \mbe$-character
difference is among the subalgebra conditions which define $A$.

\begin{corollary}
    Let $B \subset \Kx$ be a subalgebra of finite codimension, $L$ be a
    non-zero $\mal, \mbe$-subalgebra condition on $B$, and $A = \ker(L)$. Then 
	\[
		\Sp(A) = \{ \mal, \mbe \} \cup \Sp(B).   
	\]
\end{corollary}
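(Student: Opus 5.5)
We prove the two inclusions separately, treating the character difference case $\mal \neq \mbe$ and the derivation case $\mal = \mbe$ in parallel.

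\emph{The inclusion $\{\mal,\mbe\} \cup \Sp(B) \subseteq \Sp(A)$.} Since $A \subseteq B$, any condition $f'_{\bm{u}}(\mal)=0$ or $f(\mal)=f(\mbe)$ holding for all $f \in B$ also holds for all $f \in A$. Hence $\Sp(B) \subseteq \Sp(A)$.

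If $L$ is a character difference, then every $f \in A$ satisfies $f(\mal)=f(\mbe)$ with $\mal \neq \mbe$, so both points lie in $\Sp(A)$.

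If $L$ is an $\mal$-derivation, we may assume $\mal \notin \Sp(B)$, since otherwise we are done. By Corollary~\ref{cor-trivial_derivations}, $L(f)=\sum_i u_i f'_{x_i}(\mal) = f'_{\bm{u}}(\mal)$ for some $\bm{u}\in\K^n$. The vector $\bm{u}$ is nonzero because $L \neq 0$. Then $f'_{\bm{u}}(\mal)=0$ for all $f\in A$, so $\mal\in\Sp(A)$.

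\emph{The inclusion $\Sp(A) \subseteq \{\mal,\mbe\} \cup \Sp(B)$.} Take $\mga \in \Sp(A)$ with $\mga \notin \{\mal,\mbe\}$; we show $\mga \in \Sp(B)$. We first claim that $\mga \not\sim \mal,\mbe$ in $B$ may be assumed. Indeed, if $\mga \sim \mal$ in $B$ with $\mga \neq \mal$, then $\mga\in\Sp(B)$ by definition, and the same holds for $\mbe$.

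There are two cases, according to which defining property puts $\mga$ in $\Sp(A)$.

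\emph{Derivation case.} Suppose $f'_{\bm{u}}(\mga)=0$ for all $f\in A$. Then $D:f\mapsto f'_{\bm{u}}(\mga)$ is an element of $\mathcal{D}_{\mga}(B)$, nonzero on $\Kx$, which vanishes on $A$. If $\mga\notin\Sp(B)$, then by Corollary~\ref{cor-trivial_derivations} $\mathcal{D}_{\mga}(B)$ consists of the $n$ functionals $f\mapsto f'_{x_i}(\mga)$, and they remain linearly independent on $B$. Since $D$ vanishes on $A=\Ker(L)$, Lemma~\ref{lm-intersect_lin_funcs} gives $D=cL$. Now $D \neq 0$ on $B$: otherwise $\mga\in\Sp(B)$ directly. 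So $c \neq 0$, and $L$ is a nonzero element of $\mathcal{D}_{\mga}(B)$ which is also an $\mal,\mbe$-condition. Lemma~\ref{lm-sep_conds} then forces $\mga\sim\mal$, contradicting our assumption.

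\emph{Point case.} Suppose $f(\mga)=f(\mde)$ for all $f\in A$, for some $\mde\neq\mga$. If $\mga\sim\mde$ in $B$, then $\mga\in\Sp(B)$. Otherwise $E:f\mapsto f(\mga)-f(\mde)$ is a nonzero $\mga,\mde$-character difference on $B$ vanishing on $A$. Lemma~\ref{lm-intersect_lin_funcs} gives $E=cL$ with $c\neq0$, so $L$ is also a $\mga,\mde$-condition. Lemma~\ref{lm-sep_conds} yields $\mga\sim\mal$ or $\mga\sim\mbe$ in $B$, again a contradiction.

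The main obstacle is the derivation case with $\mga \neq \mal$. There the key step is showing that the directional-derivative functional $D$ is nonzero on $B$ whenever $\mga\notin\Sp(B)$, which is handled by noting that vanishing of $D$ on $B$ would by definition place $\mga$ in $\Sp(B)$. The subsequent use of Lemma~\ref{lm-sep_conds} requires $L$ nonzero, which is part of the hypothesis.
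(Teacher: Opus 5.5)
Your proof is correct and follows essentially the same route as the paper. The inclusion $\{\mal,\mbe\}\cup\Sp(B)\subseteq\Sp(A)$ comes from the definitions together with Corollary~\ref{cor-trivial_derivations}. The reverse inclusion takes the witnessing functional (a directional derivative at $\mga$, or a $\mga,\mde$-character difference) that vanishes on $A$ but not on $B$, uses Lemma~\ref{lm-intersect_lin_funcs} to get $L=cL'$ with $c\neq 0$, and then Lemma~\ref{lm-sep_conds} to force $\mga\sim\mal$ or $\mga\sim\mbe$ in $B$. The only differences are cosmetic: you argue $\mga\notin\{\mal,\mbe\}\Rightarrow\mga\in\Sp(B)$ instead of the contrapositive, and your aside that $\mathcal{D}_{\mga}(B)$ is spanned by the partials at $\mga$ is not needed.
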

\begin{proof}
    It is immediate from the definitions and Corollary
    \ref{cor-trivial_derivations} that $\{ \mal, \mbe \} \cup \Sp(B) \subseteq
    \Sp(A)$. We need to show that no unexpected elements appear in $\Sp(A)$.
    Suppose that $\mga \in \Sp(A) \setminus \Sp(B)$. Then by the definition of
    the subalgebra spectrum, we have $L'(f) = 0$ for all $f \in A$ and $L'(g)
    \not = 0$ for some $g \in B$ where $L'$ is either a $\mga, \mde$-character
    difference for some $\mde$, or $L'$ is some $\mga$-derivation given by
    $L'(f) = f'_{\bm{u}}(\mga)$ for some $\bm{u} \in \K^{n} \setminus
    \{\bm{0}\}$. As both $L, L'$ are non-zero on $B$, but zero on $A$ which has
    codimension $1$ in $B$, it follows from Lemma \ref{lm-intersect_lin_funcs}
    that $L = cL'$ for some $c \in \K$. By Lemma \ref{lm-sep_conds}, we have
    $\mal \sim \mga$ or $\mbe \sim \mga$ in $B$. Suppose without loss of
    generality that $\mal \sim \mga$ in $B$. Then $\mal = \mga$, as otherwise
    we would have $h(\mal) = h(\mga)$ for all $h \in B$, and the definition of
    the subalgebra spectrum tells would give the contradictory statement $\mga
    \in \Sp(B)$.
	
\end{proof}

Our remaining objective for this section is to show that kerneling by $\mal,
\mbe$-character differences merges the  $\mal$- and $\mbe$-derivations spaces
in a direct sum. We will do this in two parts, the first of which is the
following lemma.

\begin{lemma}\label{lm-derivclusters_disjoint}
    Let $A \subset \Kx$ be a subalgebra of finite codimension where $\mal \not
    \sim \mbe$. Let $L$ be an $\mal, \mbe$-character difference and denote $A'
    = A \cap \Ker(L)$. Then
    \[
        \restr{\mathcal{D}_{\mal}(A)}{A'} \cap
        \restr{\mathcal{D}_{\mbe}(A)}{A'} = 0
    \]
    and
	\[
		\restr{\mathcal{D}_{\mal}(A)}{A'}
		\oplus
		\restr{\mathcal{D}_{\mbe}(A)}{A'}
		\subseteq 
		\mathcal{D}_{\mal}(A').
	\]
\end{lemma}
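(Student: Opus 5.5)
The plan is to prove the two claims separately, inclusion first and then the trivial intersection.

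For the inclusion, note that on $A'$ we have $f(\mal) = f(\mbe)$ for every $f$, since $L(f) = c(f(\mal)-f(\mbe)) = 0$ with $c \neq 0$ (we may assume $L \neq 0$; otherwise $A' = A$ and $\mal \not\sim \mbe$ contradicts nothing but the statement degenerates). Take $D \in \mathcal{D}_{\mbe}(A)$. For $f,g \in A'$,
\[
D(fg) = f(\mbe)D(g) + g(\mbe)D(f) = f(\mal)D(g) + g(\mal)D(f),
\]
so $\restr{D}{A'}$ is an $\mal$-derivation on $A'$. This gives $\restr{\mathcal{D}_{\mbe}(A)}{A'} \subseteq \mathcal{D}_{\mal}(A')$. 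Similarly $\restr{\mathcal{D}_{\mal}(A)}{A'} \subseteq \mathcal{D}_{\mal}(A')$, and the sum of the two restricted spaces lies in $\mathcal{D}_{\mal}(A')$. Once the intersection is shown to be zero, the sum is direct.

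For the intersection, suppose $D_1 \in \mathcal{D}_{\mal}(A)$ and $D_2 \in \mathcal{D}_{\mbe}(A)$ satisfy $\restr{D_1}{A'} = \restr{D_2}{A'}$. Then $D_1 - D_2$ vanishes on $A' = \Ker(L)$, so by Lemma \ref{lm-intersect_lin_funcs} we get $D_1 - D_2 = cL$ for some $c \in \K$. The aim is to show $D_1 = D_2 = 0$ on $A'$.

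The key step is a Leibniz computation on mixed products.

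\begin{enumerate}
\item Choose $g \in A$ with $L(g) \neq 0$. Using $\mal \not\sim \mbe$, normalise so that $g(\mbe) = 0$ and $g(\mal) = 1$; subtracting a constant and scaling keeps $g \in A$.
\item Take any $f \in \mathfrak{m}_{\mal}(A')$. Since $f \in A'$, also $f(\mbe) = f(\mal) = 0$.
\item Evaluate $D_1 - D_2 = cL$ on $fg$. We have $D_1(fg) = f(\mal)D_1(g) + g(\mal)D_1(f) = D_1(f)$ and $D_2(fg) = f(\mbe)D_2(g) + g(\mbe)D_2(f) = 0$. On the other side, $cL(fg) \propto f(\mal)g(\mal) - f(\mbe)g(\mbe) = 0$.
\item Hence $D_1(f) = 0$ for all $f \in \mathfrak{m}_{\mal}(A')$. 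Since $D_1(1) = 0$ and $A' = \K + \mathfrak{m}_{\mal}(A')$, this gives $\restr{D_1}{A'} = 0$.
\end{enumerate}

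It follows that the common restriction is zero, which proves $\restr{\mathcal{D}_{\mal}(A)}{A'} \cap \restr{\mathcal{D}_{\mbe}(A)}{A'} = 0$.

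The main obstacle I expect is the choice of $g$, since the whole argument depends on having a $g$ that separates $\mal$ from $\mbe$. Its existence comes directly from $\mal \not\sim \mbe$ in $A$: some $h \in A$ has $h(\mal) \neq h(\mbe)$, and then $g = (h - h(\mbe))/(h(\mal) - h(\mbe))$ works.

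I should double-check that the computation in step 3 uses only that $f \in A'$ with $f(\mal) = 0$, and that $fg \in A$ (not $A'$) is legitimate because $D_1$, $D_2$ and $L$ are all defined on $A$. Both hold, so the argument should go through.
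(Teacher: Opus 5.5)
Your proof is correct, but it handles the trivial intersection by a different and more elementary route than the paper.

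The paper's argument runs as follows:
\begin{itemize}
\item It takes $f\in A$ with $f(\mal)=1$, $f(\mbe)=0$ and evaluates $D_1-D_2$ on $f^k-f\in A'$. This gives $(k-1)D_1(f)+D_2(f)=0$ for all $k>1$, hence $D_1(f)=D_2(f)=0$.
\item Using $A=A'\oplus f\K$, it concludes $D_1=D_2$ on all of $A$.
\item It then invokes Corollary~\ref{cor-deriv_disjoint}, which ultimately rests on the cubic-expansion argument of Lemma~\ref{lm-sep_conds}.
\end{itemize}

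You instead multiply elements $f\in\mathfrak{m}_{\mal}(A')$ by the separating element $g$. The $\mal$-Leibniz rule gives $D_1(fg)=D_1(f)$, while the $\mbe$-Leibniz rule gives $D_2(fg)=0$. So the common restriction is killed on $\mathfrak{m}_{\mal}(A')$ directly. This avoids Corollary~\ref{cor-deriv_disjoint}, Lemma~\ref{lm-sep_conds} and the codimension-one decomposition altogether.

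You do not even need Lemma~\ref{lm-intersect_lin_funcs}. Since $L(fg)=0$, we have $fg\in A'$, so $D_1(f)=D_1(fg)=D_2(fg)=0$ follows straight from the equality of the restrictions. You also verify the inclusion into $\mathcal{D}_{\mal}(A')$ explicitly, which the paper leaves implicit.

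One small correction to your aside about $L=0$. If the scalar in $L$ is $0$, then $A'=A$, and the inclusion claim is genuinely false rather than merely degenerate. It would force $\mathcal{D}_{\mbe}(A)\subseteq\mathcal{D}_{\mal}(A)$, contradicting Corollary~\ref{cor-deriv_disjoint} whenever $\mathcal{D}_{\mbe}(A)\neq 0$. So $L\neq 0$ has to be read as part of the hypothesis; the paper assumes this tacitly as well.
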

\begin{proof}
    Let $D_1 \not = 0 \in \mathcal{D}_{\mal}(A)$ and $D_{2} \not = 0 \in
    \mathcal{D}_{\mbe}(A)$. We are done if we can show that $\restr{D_1}{A'}
    \not = \restr{D_2}{A'}$. \\
        
    Assume towards a contradiction that $\restr{D_1}{A'} = \restr{D_2}{A'}$. As
	$\mal \not \sim \mbe$ in $A$, we can find $f \in A$ such that $f(\mal) = 1$
	and $f(\mbe) = 0$. Then $f \not \in A'$, and as $A'$ has codimension $1$ in
	$A$, we have $A = A' \oplus f\K$. Now let $h_k = f^k - f$ for $k > 1$ and
	note that $h_k \in A'$. As $D_1$ and $D_2$ coincide here, we get 
	\begin{align*}
		0
		 & =
		D_1(h_k) - D_2(h_k)         \\
		 & =
		D_1(f^k - f) - D_2(f^k - f) \\
		 & =
		k 1^{k-1} D_1(f) - D_1(f)
		-
		k 0^{k-1} D_2(f)
		+ 
		D_2(f)                      \\
		 & =
		(k-1)D_1(f) + D_2(f).
	\end{align*}
    As this needs to hold for all $k > 1$, we see that $D_1(f) = D_2(f) = 0$.
    Now, $D_1 = D_2$ on $A'$ by assumption, and as $A = A' \oplus f\K$, we see
    that $D_1 = D_2$ on all of $A$, contradicting Corollary
    \ref{cor-deriv_disjoint}.
	
\end{proof}

\begin{theorem}\label{th-connect_clusters_direct_sum_deriv_space}
	Let $A \subset \Kx$ be a subalgebra of finite codimension where
	$\mal \not \sim \mbe$. Let $L$ be an $\mal,
		\mbe$-character difference and denote $A' = A \cap \Ker(L)$. Then 
	\[
		\mathcal{D}_{\mal}(A')
		=
		\restr{\mathcal{D}_{\mal}(A)}{A'}
		\oplus
		\restr{\mathcal{D}_{\mbe}(A)}{A'}
	\]
\end{theorem}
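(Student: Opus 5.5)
Lemma \ref{lm-derivclusters_disjoint} already gives the inclusion $\restr{\mathcal{D}_{\mal}(A)}{A'} \oplus \restr{\mathcal{D}_{\mbe}(A)}{A'} \subseteq \mathcal{D}_{\mal}(A')$ with a direct sum. So it suffices to prove the dimension inequality
\[
\dim \mathcal{D}_{\mal}(A') \leq \dim \mathcal{D}_{\mal}(A) + \dim \mathcal{D}_{\mbe}(A).
\]
By Lemma \ref{lm-dim_deriv_constant_kernel_outside_cluster}, $L$ is not an $\mal$-derivation (Corollary \ref{cor-no-subalg-cond-is-deriv-and-diff}), so restriction to $A'$ preserves the dimension of $\mathcal{D}_{\mal}(A)$. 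The same holds for $\mathcal{D}_{\mbe}(A)$. By Lemma \ref{lm-look_at_M2} it is therefore enough to show
\[
\dim\left(\mathfrak{m}_{\mal}(A)^2/\mathfrak{m}_{\mal}(A')^2\right) \leq \dim \mathcal{D}_{\mbe}(A) + 1 = \dim\left(\mathfrak{m}_{\mbe}(A)/\mathfrak{m}_{\mbe}(A)^2\right) + 1,
\]
where the last equality is Theorem \ref{th-dim_D}.

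I would first describe $\mathfrak{m}_{\mal}(A')$ concretely. Choose $f \in A$ with $f(\mal)=1$ and $f(\mbe)=0$. Then $e := 1-f \in \mathfrak{m}_{\mal}(A)$ satisfies $e(\mbe)=1$, so $e \notin A'$. Every $u \in \mathfrak{m}_{\mal}(A)$ lies in $A'$ exactly when $u(\mbe)=0$. Hence
\[
\mathfrak{m}_{\mal}(A') = \mathfrak{m}_{\mal}(A)\cap\mathfrak{m}_{\mbe}(A) =: I \quad\text{and}\quad \mathfrak{m}_{\mal}(A)= I\oplus e\K.
\]
As in Lemma \ref{lm-look_at_gM}, $\mathfrak{m}_{\mal}(A)^2 = I^2 + eI + e^2\K$. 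So we must bound $\dim\big((eI + e^2\K + I^2)/I^2\big)$ by $\dim(\mathfrak{m}_{\mbe}(A)/\mathfrak{m}_{\mbe}(A)^2)+1$. The summand $e^2$ accounts for the ``$+1$''. It remains to show that the map
\[
\phi : I \to (eI + I^2)/I^2, \qquad u\mapsto eu,
\]
which is surjective, factors through $I/(I \cap \mathfrak{m}_{\mbe}(A)^2)$. Since $I \subseteq \mathfrak{m}_{\mbe}(A)$, this quotient injects into $\mathfrak{m}_{\mbe}(A)/\mathfrak{m}_{\mbe}(A)^2$, which gives the bound.

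The key step, and the main obstacle, is proving $e\cdot(I\cap \mathfrak{m}_{\mbe}(A)^2) \subseteq I^2$. My idea is to use idempotent-like behaviour. Since $f(\mal)=1$ and $f(\mbe)=0$, we have $fv \in I$ for all $v\in\mathfrak{m}_{\mbe}(A)$, and $ev \in I$ for all $v \in \mathfrak{m}_{\mal}(A)$. For $v,w\in \mathfrak{m}_{\mbe}(A)$, write
\[
vw = (fv)(fw) + (ev)w + (fv)(ew) \quad\text{using } 1=(e+f)^2 .
\]
Then
\[
e\,vw = (fv)(e\,fw) + \cdots .
\]
Here $e f w\in I$ when $w\in\mathfrak{m}_{\mbe}(A)$, since $ef$ vanishes at both points. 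Also $ev\in\mathfrak{m}_{\mbe}$ only if $v$ vanishes at $\mbe$, which it does, and $ev$ vanishes at $\mal$. So $e v \in I$ and $ew\in I$, giving $e\,vw = (ev)w$, whose factor $w$ might not lie in $I$. I would fix this by expanding $w = fw + ew$ and using $fw\in I$. Then
\[
e\,vw=(ev)(fw)+(ev)(ew)\in I^2.
\]
Because elements of $I\cap\mathfrak{m}_{\mbe}(A)^2$ are sums of such products $vw$ (possibly not individually lying in $I$, but linearity suffices), this yields the containment. Assembling the inequalities then gives equality of dimensions, and hence the theorem. I would double-check in the key step that finite codimension is not needed beyond guaranteeing that the dimensions involved are finite.
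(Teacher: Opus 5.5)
Your reduction is sound and is essentially the paper's. Set $I=\mathfrak{m}_{\mal}(A')=\mathfrak{m}_{\mal}(A)\cap\mathfrak{m}_{\mbe}(A)$, so that $\mathfrak{m}_{\mal}(A)=I\oplus e\K$. Everything then comes down to showing $e\cdot\bigl(I\cap\mathfrak{m}_{\mbe}(A)^2\bigr)\subseteq I^2$; the paper's $g$ and $\widehat{g}$ play the roles of your $-e$ and $f$.

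\textbf{The gap is in your proof of this containment.} The claim ``$fv\in I$ for all $v\in\mathfrak{m}_{\mbe}(A)$'' is false, since $(fv)(\mal)=v(\mal)$ need not vanish. You have swapped the roles: the correct facts are $fv\in I$ for $v\in\mathfrak{m}_{\mal}(A)$, and $ev\in I$ for $v\in\mathfrak{m}_{\mbe}(A)$. So the step $w=fw+ew$ with $fw\in I$ breaks down. In fact it is not true that $evw\in I^2$ for all $v,w\in\mathfrak{m}_{\mbe}(A)$. Take $A=\K[x]$, $\mal=0$, $\mbe=1$, $f=1-x$, $e=x$. Then $I=x(x-1)\K[x]$ and $I^2=x^2(x-1)^2\K[x]$. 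Yet $e\cdot f\cdot f=x(1-x)^2\notin I^2$, even though $f\in\mathfrak{m}_{\mbe}(A)$. So ``linearity suffices'' is wrong: the hypothesis that the element lies in $I$ must be applied to the whole sum, not to the individual products.

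The repair is what the paper does with the $(\widehat{g},\widehat{g})$ term. Since $f\in\mathfrak{m}_{\mbe}(A)$ and $f(\mal)=1$, you have $\mathfrak{m}_{\mbe}(A)=I\oplus f\K$, hence $\mathfrak{m}_{\mbe}(A)^2=I^2+fI+f^2\K$. Now $eI^2\subseteq I^2$ because $I$ is an ideal of $A$, and $efI\subseteq I^2$ because $ef$ vanishes at both points, so $ef\in I$. Take $p=q+cf^2\in I$ with $q\in I^2+fI\subseteq I$. Evaluating at $\mal$ gives $0=p(\mal)=c$, so $ep=eq\in I^2$. With this replacement the rest of your argument goes through: the factorization of $u\mapsto eu$ through $I/(I\cap\mathfrak{m}_{\mbe}(A)^2)\hookrightarrow\mathfrak{m}_{\mbe}(A)/\mathfrak{m}_{\mbe}(A)^2$, and the extra $1$ coming from $e^2$.
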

\begin{proof}
	Lemma \ref{lm-derivclusters_disjoint} tells us that 
	\[
		\restr{\mathcal{D}_{\mal}(A)}{A'}
		\oplus
		\restr{\mathcal{D}_{\mbe}(A)}{A'}
		\subseteq
		\mathcal{D}_{\mal}(A'),
	\]
	so we are done if we can show that 
    \begin{equation}\label{ineq_deriv-disjoin-main-ineq}
		\dim\left(\mathcal{D}_{\mal}(A')/\restr{\mathcal{D}_{\mal}(A)}{A'}\right)
		\leq
		\dim\left(\restr{\mathcal{D}_{\mbe}(A)}{A'}\right).
	\end{equation}
    We now wish to translate this using Lemma \ref{lm-look_at_gM} and Theorem
    \ref{th-dim_D}. Thus, we let $g$ be an element in $\mathfrak{m}_{\mal}(A)
    \setminus \mathfrak{m}_{\mal}(A')$ of minimal degree. Then $g(\mbe) \not =
    0$, and we may suppose that $g(\mbe) = -1$. Showing that inequality
    (\ref{ineq_deriv-disjoin-main-ineq}) holds is then equivalent to showing that
	\begin{equation*}
		\dim\left(
		\left(
			g \mathfrak{m}_{\mal}(A) + \mathfrak{m}_{\mal}(A')^2 
			\right)
		/
		\mathfrak{m}_{\mal}(A')^2
		\right)
		-
		1
		\leq
		\dim\left(
		\mathfrak{m}_{\mbe}(A)
		/
		\mathfrak{m}_{\mbe}(A)^2 
		\right).
	\end{equation*}
	Now, as $L$ is a linear functional, we either have that
	\[
		\dim\left(
		\left(
			g \mathfrak{m}_{\mal}(A) + \mathfrak{m}_{\mal}(A')^2 
			\right)
		/
		\mathfrak{m}_{\mal}(A')^2
		\right) - 1
		=
		\dim\left(
		\left(
			g \mathfrak{m}_{\mal}(A') + \mathfrak{m}_{\mal}(A')^2 
			\right)
		/
		\mathfrak{m}_{\mal}(A')^2
		\right),
	\]
	or that
	\[
		\dim\left(
		\left(
			g \mathfrak{m}_{\mal}(A) + \mathfrak{m}_{\mal}(A')^2 
			\right)
		/
		\mathfrak{m}_{\mal}(A')^2
		\right)
		=
		\dim\left(
		\left(
			g \mathfrak{m}_{\mal}(A') + \mathfrak{m}_{\mal}(A')^2 
			\right)
		/
		\mathfrak{m}_{\mal}(A')^2
		\right).
	\]
	Either way, 
	\[
		\dim\left(
		\left(
			g \mathfrak{m}_{\mal}(A) + \mathfrak{m}_{\mal}(A')^2 
			\right)
		/
		\mathfrak{m}_{\mal}(A')^2
		\right) - 1
		\leq	
		\dim\left(
		\left(
			g \mathfrak{m}_{\mal}(A') + \mathfrak{m}_{\mal}(A')^2 
			\right)
		/
		\mathfrak{m}_{\mal}(A')^2
		\right),
	\]
	and it will be enough to show that
	\begin{equation}\label{eq-main_id}
		\dim\left(
		\left(
			g \mathfrak{m}_{\mal}(A') + \mathfrak{m}_{\mal}(A')^2 
			\right)
		/
		\mathfrak{m}_{\mal}(A')^2
		\right)
		\leq
		\dim\left(
		\mathfrak{m}_{\mbe}(A)
		/
		\mathfrak{m}_{\mbe}(A)^2 
		\right),
	\end{equation}
	which will be our main objective for the remainder of this proof. \\
	
	Before we start, note that $\mathfrak{m}_{\mal}(A') = \mathfrak{m}_{\mbe}(A')$
	since $\mal \sim \mbe$ in $A'$ (but not in $A$ of
	course). These different ways of writing the same space will be
	used interchangeably throughout the proof. \\
	
	Like in the proof of Theorem \ref{th-conditions_on_other_clusters}, let $B$
	be a vector space basis for $\mathfrak{m}_{\mal}(A)$ containing $g$, where
	all polynomials of $B$ have distinct leading monomials, and $B \setminus
		\{g\}$ is a vector space basis for $\mathfrak{m}_{\mal}(A')$. \\
	
	We construct another basis very similar to $B$. Let $\hat{g} = g -
		g(\mbe)$, and $\widehat{B} = B \cup \{\widehat{g}\} \setminus \{g\}$. Then
	$\widehat{g}(\mal) = 1, \widehat{g}(\mbe) = 0$ and $\widehat{B}$ is a
	vector space basis for $\mathfrak{m}_{\mbe}(A)$. \\
	
	We will prove inequality (\ref{eq-main_id}) by showing that, given any $m$
	linearly independent elements in $\left(g \mathfrak{m}_{\mal}(A') +
		\mathfrak{m}_{\mal}(A')^2\right) / \mathfrak{m}_{\mal}(A')^2$, we can
	always find $m$ linearly independent elements in $\mathfrak{m}_{\mbe}(A) /
		\mathfrak{m}_{\mbe}(A)^2$. More precisely, let $F \subset
		\mathfrak{m}_{\mal}(A')$ be a finite set of polynomials such that $gF$ is a
	linearly independent set modulo $\mathfrak{m}_{\mal}(A')^2$. We will show
	that the elements of $F$ are linearly independent modulo
	$\mathfrak{m}_{\mbe}(A)^2$. \\
	
	We consider a linear relation among $F$ modulo $\mathfrak{m}_{\mbe}(A)^2$.
	Assume towards a contradiction that there exists a set of scalars $\{a_i \in
		\K : f_i \in F\}$ such that some $a_i \not = 0$ and 
	\[
		\sum_{f_i \in F} a_i f_i 
		= 
		\sum_{(h_i, H_i) \in H} b_i h_i H_i 
	\]
	for some finite $H \subset \widehat{B} \times \widehat{B}$, and scalars
	$b_i \in \K$. Denote $p = \sum_{(h_i, H_i) \in H} b_i h_i H_i$. Then 
	\[
		g\sum_{f_i \in F} a_i f_i = g p,
	\]
	and it follows that $gp \not \in \mathfrak{m}_{\mal}^{2}(A')$ since $gF$ is
	linearly independent modulo $\mathfrak{m}_{\mal}(A')^2$. I.e, we have $p$ such
	that $p \in \mathfrak{m}_{\mbe}(A)^2$ but $gp \not \in \mathfrak{m}_{\mbe}(A')^2$. \\
	
	Now we will show that $gh_iH_i \in \mathfrak{m}^{2}_{\mbe}(A')$ for each
	$(h_i, H_i) \in H \setminus (\widehat{g}, \widehat{g})$. We have that at
	least one of $h_i$ or $H_i$ is not equal to $\widehat{g}$, say $h_i$,
	whence $h_i \in \mathfrak{m}_{\mbe}(A')$. Moreover, as $H_i \in \hat{B}$ we
	have $H_i(\mbe) = 0$, and as $g(\mal) = 0$, we have 
	\[
		L(gH_i)
		=
		g(\mal)H_i(\mal)
		-
		g(\mbe)H_i(\mbe)
		=
		0,
	\]
	so $gH_i \in \mathfrak{m}_{\mbe}(A')$, and
	\[
		gh_i H_i
		\in 
		\mathfrak{m}_{\mbe}(A')^2 
		\text{ for }
		(h_i, H_i)
		\in 
		H \setminus (\widehat{g}, \widehat{g}).
	\]
	Let $s = 1$ if $(\widehat{g}, \widehat{g}) \in H$ and $s = 0$ otherwise.
	Then
	\begin{align*}
		gp
		 & =
		\sum_{(h_i, H_i) \in H} b_i gh_i H_i \\
		 & =
		sc g \widehat{g}^2 
		+
		\sum_{(h_i, H_i) \in H \setminus (\widehat{g}, \widehat{g})} b_i gh_i H_i
	\end{align*}
	for some scalar $c \not = 0\in \K$. We see that $gp \in
		\mathfrak{m}_{\mbe}(A')^2$ if $s = 0$ so $s$ must be $1$.  Now note that the sum
	\[
		\sum_{(h_i, H_i) \in H \setminus (\widehat{g}, \widehat{g})}b_i g h_i H_i
	\]
	lies in $g\mathfrak{m}_{\mal}(A)$, since for each $h_i, H_i$, either $h_i(\mal) =0$ or
	$H_i(\mal) = 0$. Moreover, $g \widehat{g}^2 \not \in g\mathfrak{m}_{\mal}(A)$ since
	$E(\widehat{g}^2) = \widehat{g}^2(\mal)- \widehat{g}^2(\mbe) = -1$, so the
	sum
	\[
		cg \widehat{g}^2 
		+
		\sum_{(h_i, H_i) \in H \setminus (\widehat{g}, \widehat{g})} b_i g h_i H_i
	\]
	does not lie in $g \mathfrak{m}_{\mal}(A)$, which is a contradiction to $gF \in
		g \mathfrak{m}_{\mal}(A)$ and we are done.
	
\end{proof}

\section{$\mal$-Derivations as Derivative Evaluations.}

In the univariate case we know from
\cite{gronkvist2022subalgebras,leffler2021kandidat} that we can write
$\alpha$-derivations in $A \subseteq \K[x]$ as linear combinations of
derivative evaluations in the spectral elements which are equivalent to
$\alpha$ in $A$. Moreover, we saw in Corollary \ref{cor-trivial_derivations}
that trivial derivations are given as linear combinations of evaluations
of partial derivatives. The purpose of this section is to explore how these
statements generalize to subalgebras $A \subseteq \Kx$ of finite codimension.
\\

We will begin by introducing some notation. We then take a slight detour to
build some intuition by determining $\mathcal{D}_{\mal}(A)$ for subalgebras $A
\subset \Kx$ of codimension $1$. We then state and prove a generalization of
the Main Theorem given in \cite{gronkvist2022subalgebras,leffler2021kandidat}
to subalgebras $A \subset \Kx$ of finite codimension in the multivariate case.

\subsection{Notation and General Leibniz Rule for Directional Derivatives}

When we have a multiset $U = [\bm{u}_1, \bm{u}_2, \ldots, \bm{u}_k]$ with each
$\bm{u}_i \in \K^{n}$, and we want to compose the corresponding directional
derivatives one after another, we write $f^{(k)}_{U}$. If $k$ is small, we may
write $f^{(k)}_{\bm{u}_1 \bm{u}_2 \ldots \bm{u}_k}$. We use multisets as
directional derivatives commute and may be applied with multiplicity greater
than $1$. \\

We will often abuse notation and let $x_i$, whenever it appears in some
multiset describing a higher order directional derivative, denote the
element $(0, \ldots, 0, 1, 0, \ldots, 0) \in \K^{n}$ where the $i$-th index is
$1$, and the remaining indices are $0$. This allows us to write expressions
like $f'_{x_i}$, $f^{(3)}_{x_{1}x_{2}x_{1}}$, and $f^{(k)}_{x_{1} \bm{u}_2
\ldots \bm{u}_k}$. \\

The General Leibniz Rule is given by 
\begin{align*}
	(fg)^{(k)}_{U}
	 & =
	\sum_{U' \in \mathcal{P}(U)}
	f^{(|U'|)}_{U'}
	g^{(|U| - |U'|)}_{U \setminus U'}
\end{align*}
where $\mathcal{P}(U)$ is the power-multiset of the multiset $U$. Note that
$\mathcal{P}(U)$ will contain duplicates if $U$ does. \\ 

We introduce one more notation. We write $\bm{d}^j$ to be the set of all
multisets of combinations of $j$ elements from $\bm{x} = \{x_1, x_2, \ldots
x_n\}$ drawn with repetitions. Essentially, we use $\bm{d}^j$ to represent all
possible combinations of $j$ pure (as in not directional) higher order partial
derivatives. I.e if $n = 3$ then 
\begin{align*}
	\bm{d}^3 
	= & 
	\{
	[x_1, x_1, x_1],
	[x_1, x_1, x_2],
	[x_1, x_1, x_3],
	[x_1, x_2, x_2],
	[x_1, x_2, x_3],      \\
	  & [x_1, x_3, x_3],
	[x_2, x_2, x_2],
	[x_2, x_2, x_3],
	[x_2, x_3, x_3],
	[x_3, x_3, x_3]
	\}
\end{align*}
Each $\bm{d}^j$ will be of size 
\[
	|\bm{d}^j|
	=
	\binom{n + j - 1}{n}
\]
as any element of $\bm{d}^j$ corresponds to a non-negative integer solution of
$y_1 + y_2 + \ldots + y_n = j$ where $y_i$ is the count of $x_i$ in a given
$d^j \in \bm{d}^{j}$

\subsection{$\mathcal{D}_{\mal}(A)$ When $A$ Has Codimension $1$}

Let $A \subset \Kx$ be a subalgebra of codimension $1$. Our goal is to find a
basis for $\mathcal{D}_{\mal}(A)$. From Theorem \ref{th-gorin}, we know that
$A$ is the kernel of some $\mal, \mbe$-subalgebra condition $L : \Kx \to \K$,
and we split into cases depending on if $L$ is an $\mal$-derivation or an
$\mal, \mbe$-character difference. \\

We begin by considering the case when $\mal = \mbe$ and $L$ is an
$\mal$-derivation. We denote the entries in $\mal$ by $\mal = (\bm{\alpha}_{1},
\ldots, \alpha_{n})$. By Corollary \ref{cor-trivial_derivations}, we know that
$L$ is of the form $L : f \mapsto f'_{\bm{u}}(\mal)$ for some $\bm{u} = (u_{1},
\ldots, u_n)$. We know from Corollary \ref{cor-deriv_bound} that
$\dim(\mathcal{D}_{\mal}(A)) \leq 2n$. Moreover, as $A$ has codimension $1$ in
$\Kx$, it follows that any set of $2n + 1$ linearly independent linear
functionals on $\Kx$ will span $\mathcal{D}_{\mal}(A)$ if they restrict to
$\mal$-derivations on $A$, and this is the approach we will take to find a
spanning set. \\

Let $j \in [1..n]$ be such that $u_j \not = 0$. We define $2n + 1$ linear functionals on $\Kx$ according to
\begin{align*}
	D_{x_i} & : f \mapsto f'_{x_i}(\mal), \, i \in [1..n]              \\
	D_{x_i \bm{u}} & : f \mapsto f''_{x_i \bm{u}}(\mal), \, i \in [1..n] \\
	D_{\bm{u}\bm{u}\bm{u}} & : f \mapsto f'''_{\bm{u} \bm{u} \bm{u}}(\mal).
\end{align*}
After suitable rescaling, these linear functionals form a dual basis to the
subspace of $\Kx$ spanned by
\begin{align*}
	 & (x_i - \alpha_i), \, i \in [1..n]                 \\
	 & (x_i - \alpha_i)(x_j - \alpha_j), \, i \in [1..n] \\
	 & (x_j - \alpha_j)^{3}.
\end{align*}
In particular, the linear functionals are linearly independent on $\Kx$. We now
verify that they become $\mal$-derivations when restricted to $A$. \\

As the $D_{x_i}$ are $\mal$-derivations on $\Kx$, it follows that they remain
$\mal$-derivations when restricted to $A$. To see that the $D_{x_i\bm{u}}$
become $\mal$-derivations when restricted to $a$, note that for $f, g \in A$ we
have
\begin{align*}
	(fg)''_{x_i \bm{u}} 
	 & =
	f''_{x_i\bm{u}}(\mal) g(\mal)
	+
	f'_{x_i}(\mal) g'_{\bm{u}}(\mal)
	+
	f'_{\bm{u}}(\mal) g'_{x_i}(\mal)
	+
	f(\mal) g''_{x_i\bm{u}}(\mal) \\
     &=
	f''_{x_i\bm{u}}(\mal) g(\mal)
	+
	f(\mal) g''_{x_i\bm{u}}(\mal)
\end{align*}
since
\[
	f'_{x_i}(\mal) g'_{\bm{u}}(\mal)
	= 
	f'_{\bm{u}}(\mal) g'_{x_i}(\mal)
	= 
	0 
\]
for $f, g \in A$. Similarly, $D_{\bm{u}\bm{u}\bm{u}}$ becomes an
$\mal$-derivation when restricted to $A$ as 
\begin{align*}
	(fg)'''_{\bm{u} \bm{u} \bm{u}}(\mal)
	 & =
	f'''_{\bm{u} \bm{u} \bm{u}}(\mal)
	g
	+
	3 f''_{\bm{u} \bm{u}}(\mal)
	g'_{\bm{u}}(\mal)
	+
	3 f'_{\bm{u}}(\mal)
	g''_{\bm{u} \bm{u}}(\mal)
	+
	f
	g'''_{\bm{u} \bm{u} \bm{u}}(\mal) \\
     &=
	f'''_{\bm{u} \bm{u} \bm{u}}(\mal)
    +
	g'''_{\bm{u} \bm{u} \bm{u}}(\mal).
\end{align*}
We have indeed constructing a spanning set for $\mathcal{D}_{\mal}(A)$, and it
is easy to see that we may obtain a basis by simply excluding some $D_{x_i}$
for an index $i$ with $u_i \not = 0$. \\

Now let $B = \Kx \cap \Ker(L)$ where $L : \Kx \to \K$ is given by $L : f
\mapsto f(\mal) - f(\mbe)$. Applying Theorem
\ref{th-connect_clusters_direct_sum_deriv_space} yields
\[
	\mathcal{D}_{\mal}(B)
	=
	\restr{
		\mathcal{D}_{\mal}(\Kx)
	}{B}
	\oplus
	\restr{
		\mathcal{D}_{\mbe}(\Kx)
	}{B},
\]
after which we can use Lemma \ref{lm-Kx_derivations} to see that
$\mathcal{D}_{\mal}(B)$ is a $2n$-dimensional space consisting of functionals
of the form $f \mapsto a f'_{\bm{u}}(\mal) + b f'_{\bm{v}}(\mbe)$.

\subsection{Main Theorem of $\mal$-Derivations}

Extrapolating from the univariate case treated in
\cite{gronkvist2022subalgebras,leffler2021kandidat}, and the exploration of the
codimension $1$ situation, it seems reasonable to hypothesize the following
Theorem (which we will indeed prove in this section).

\begin{theorem}[Main Theorem of $\mal$-Derivations]\label{th-main}
	Let $A \subset \Kx$ be a subalgebra of finite codimension. Then there
	exist some integer $N$ such that any $\mal$-derivation over $A$ can 
	be written in the form
	\begin{equation}\label{eq-main_th}
		f
		\rightarrow 
		\sum_{\mal_i \sim \mal}
		\sum_{j = 1}^{N - 1}
		\sum_{d \in \bm{d}^j}
		c_{i,j,d} f^{(j)}_{d} (\mal_i)
	\end{equation}
	where each $c_{i,j,d} \in \K$. 
\end{theorem}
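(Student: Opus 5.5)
The plan is to reduce $D$ to a linear functional on a finite-dimensional quotient of $\Kx$, where the dual space is explicitly spanned by derivative evaluations. Then I would use the Leibniz rule to remove the contributions from points outside the cluster of $\mal$, and the evaluation terms with $j = 0$.

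\emph{Step 1 (an ideal of $\Kx$ inside $A$).} I would first show that $A$ contains a nonzero ideal $C$ of $\Kx$ of finite codimension, namely the conductor $C = \{f \in \Kx : f\Kx \subseteq A\}$. I would prove this by induction along a filtration from Theorem \ref{th-gorin}. The claim for the induction is: if $A_{i}$ contains an ideal $I$ of finite codimension in $\Kx$, and $A_{i+1} = A_i \cap \Ker(L_i)$ with $L_i$ an $\mal',\mbe'$-subalgebra condition, then $A_{i+1} \supseteq I\,\mathfrak{m}_{\mal'}\mathfrak{m}_{\mbe'}$. By the Leibniz rule $L_i(pq) = p(\mal')L_i(q) + L_i(p)q(\mbe')$, $L_i$ kills products $pq$ with $p \in I\mathfrak{m}_{\mal'} \subseteq A_i$ and $q \in \mathfrak{m}_{\mbe'}$ lying in $A_i$. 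To make both factors lie in $A_i$, I would take $I\mathfrak{m}_{\mal'}\cdot I\mathfrak{m}_{\mbe'}$, which is still of finite codimension. Hence $C \supseteq \prod_{\mbe \in S}\mathfrak{m}_{\mbe}^{M}$ for some finite set $S \subset \K^n$, which we may enlarge to contain the whole cluster of $\mal$, and some $M$.

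\emph{Step 2 (extending $D$ to $\Kx$).} Put $J = (C \cap \mathfrak{m}_{\mal})^2$. This is an ideal of $\Kx$ of finite codimension, and $J \subseteq \mathfrak{m}_{\mal}(A)^2$. Since $D(1) = 0$ and $D$ vanishes on $\mathfrak{m}_{\mal}(A)^2$, $D$ is a linear functional on $A$ vanishing on $J$. So $D$ extends to a linear functional $\widetilde{D}$ on $\Kx$ vanishing on $J$. Moreover $J \supseteq \prod_{\mbe \in S}\mathfrak{m}_{\mbe}^{N}$ for some $N$. By the Chinese remainder theorem, $\Kx/\prod_{\mbe}\mathfrak{m}_{\mbe}^N \cong \bigoplus_{\mbe} \Kx/\mathfrak{m}_{\mbe}^N$. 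The dual of $\Kx/\mathfrak{m}_{\mbe}^N$ is spanned by $f \mapsto f^{(j)}_d(\mbe)$ with $d \in \bm{d}^j$, $0 \le j \le N-1$, because these are dual to the monomials in $x_i - \beta_i$. Therefore $\widetilde{D} = \sum_{\mbe \in S} E_{\mbe}$, where each $E_{\mbe}$ is a combination of derivative evaluations at $\mbe$ of order $< N$.

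\emph{Step 3 (killing the other clusters).} I expect this to be the main obstacle. The points of $S$ fall into $\sim$-classes, and these correspond to the maximal ideals of the Artinian algebra $A/(A \cap \prod\mathfrak{m}_{\mbe}^N)$. Lifting the corresponding primitive idempotent gives $e \in A$ with $e \equiv 1$ modulo $\mathfrak{m}_{\mal_i}^N$ for every $\mal_i \sim \mal$, and $e \in \mathfrak{m}_{\mbe}^N$ for every $\mbe \in S$ not equivalent to $\mal$. Checking carefully that distinct clusters give comaximal ideals of this quotient of $A$, so that idempotent lifting applies, is the point needing care.

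Since $e^2 - e \in \mathfrak{m}_{\mal}(A)^2$, the Leibniz rule gives $D(e) = D(e^2) = 2D(e)$, so $D(e) = 0$. Then for $f \in A$ we have $D(f) = D(ef) = \widetilde{D}(ef)$. For $\mbe \not\sim \mal$, $ef \in \mathfrak{m}_{\mbe}^N$, so $E_{\mbe}(ef) = 0$. For $\mal_i \sim \mal$, $ef \equiv f$ modulo $\mathfrak{m}_{\mal_i}^N$, so $E_{\mal_i}(ef) = E_{\mal_i}(f)$. Thus $D = \sum_{\mal_i \sim \mal} E_{\mal_i}$ on $A$.

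\emph{Step 4 (removing the $j = 0$ terms).} Finally, the $j = 0$ part of $\sum_{\mal_i \sim \mal} E_{\mal_i}$ is $\sum_i c_i f(\mal_i)$, which equals $(\sum_i c_i) f(\mal)$ on $A$. Evaluating at $f = 1$: all higher-order terms vanish on constants and $D(1) = 0$, so $\sum_i c_i = 0$. The remaining expression is exactly of the form (\ref{eq-main_th}).
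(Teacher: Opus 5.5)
Your proof is correct, and after the first step it takes a genuinely different route from the paper's.

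\textbf{Comparison with the paper.} Your Step~1 is in substance the paper's Lemma~\ref{lm-ideal_subalg_in_A}: an ideal such as $\Pi_N(\Sp(A))\Kx$, of finite codimension, lies inside $A$. You run it uniformly over both kinds of conditions, with no reduction to a single cluster and no reordering of the filtration. After that the two proofs diverge. The paper proves $Q'_N=Q_N$ (Lemma~\ref{lm-Qprime=Q}). It then computes $\mathcal{D}_{\mal}(Q_N(S))$ exactly as the span of derivative evaluations of orders $N,\ldots,2N-1$ (Lemma~\ref{lm-D_of_Q}, which needs Theorems~\ref{th-dim_D}, \ref{th-conditions_on_other_clusters} and \ref{th-connect_clusters_direct_sum_deriv_space}). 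It pulls this back to $A$ with Lemma~\ref{lm-intersect_lin_funcs}, and treats several clusters by a separate appeal to Theorem~\ref{th-conditions_on_other_clusters}.

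You bypass all of that. Since $D$ kills the $\Kx$-ideal $(C\cap\mathfrak{m}_{\mal})^2$, it extends to a functional on $\Kx/\prod_{\mbe}\mathfrak{m}_{\mbe}^N$, whose dual the Chinese remainder theorem makes explicit. The idempotent $e$ then removes every foreign cluster at once, and $D(1)=0$ disposes of the evaluation terms.

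Your route is shorter and uses only standard commutative algebra. It also avoids the paper's informal regrouping of the defining conditions of $A$ by cluster in the multi-cluster step. The paper's route yields more: the exact derivation spaces of the model algebras $Q_N(S)$, the identity $Q_N=Q'_N$, and the cluster-merging results. Yours shows only that every $\mal$-derivation has the form (\ref{eq-main_th}), not which such combinations actually are derivations.

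\textbf{Two small points.}

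First, the induction claim as you first state it, $A_{i+1}\supseteq I\mathfrak{m}_{\mal'}\mathfrak{m}_{\mbe'}$, is false. Take $A_i=\{f\in\K[x]: f(0)=f(1)\}$, $I=(x(x-1))$, and the $0$-derivation $L_i: f\mapsto f'(1)$. Then $x^3(x-1)\in I\mathfrak{m}_0^2$, but $L_i(x^3(x-1))=1$. State the claim directly with $I\mathfrak{m}_{\mal'}\cdot I\mathfrak{m}_{\mbe'}$, which is what your argument actually proves.

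Second, the point you flag in Step~3 is routine. Choose $g\in A$ with $g=1$ on the class of $\mal$ and $g=0$ at the other points of $S$; a product of normalised separating elements of $A$ works. Put $e=\sum_{k\ge N}\binom{2N-1}{k}g^k(1-g)^{2N-1-k}$. Then $e\in A$, and $e\in(g^N)\subseteq\mathfrak{m}_{\mbe}^N$ for $\mbe\not\sim\mal$. Also $1-e\in((1-g)^N)\subseteq\mathfrak{m}_{\mal_i}^N$ for $\mal_i\sim\mal$. Hence $e^2-e\in\prod_{\mbe}\mathfrak{m}_{\mbe}^N\subseteq\mathfrak{m}_{\mal}(A)^2$, which is exactly what you need.
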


Throughout this section, we will assume that $A$ only has a single cluster.
This will simplify our efforts, and we can easily recover the general case
using Theorem \ref{th-conditions_on_other_clusters}. \\ 

We shall require a few constructions before giving a proof of Theorem
\ref{th-main}. We begin by defining two sets of linear functionals, and show
that their union is a set of subalgebra conditions.
\begin{definition}
	Let $S \subset \K^n$ be a finite set, and $\mal$ be some element in
	$S$. We define $\mathcal{E}(S, \mal)$ to be the set of character differences
	\[
		\mathcal{E}(S, \mal)
		=
		\{
		f \rightarrow f(\mal) - f(\mbe)
		:
		\mbe \in S \setminus \{\mal\}
		\}.
	\]
    The specific choice of $\mal \in \Sp(A)$ will often be irrelevant to us,
    and in these cases we shall simply write $\mathcal{E}(\Sp(A))$.
\end{definition}

\begin{lemma}
    Let $A \subset \Kx$ be a subalgebra of finite codimension where the
    subalgebra spectrum consists of a single cluster, and $\mal \in \Sp(A)$.
    Then $\mathcal{E}(\Sp(A), \mal)$ is a basis for the space of character
    differences which vanish on $A$.
\end{lemma}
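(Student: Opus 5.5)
The plan is to check three things: each element of $\mathcal{E}(\Sp(A),\mal)$ vanishes on $A$, the elements are linearly independent, and they span the space of character differences vanishing on $A$.

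\emph{Vanishing on $A$.} Since $\Sp(A)$ is a single cluster, every $\mbe \in \Sp(A)$ satisfies $\mbe \sim \mal$. Hence $f(\mal) = f(\mbe)$ for all $f \in A$, and each functional $f \mapsto f(\mal) - f(\mbe)$ vanishes on $A$.

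\emph{Linear independence.} This is done on $\Kx$, where evaluations at distinct points are linearly independent. Suppose $\sum_{\mbe} c_{\mbe}(f(\mal) - f(\mbe)) = 0$ for all $f \in \Kx$. For a fixed $\mbe_0 \in \Sp(A)\setminus\{\mal\}$, take an interpolating polynomial $f$ with $f(\mbe_0) = 1$ that vanishes at every other point of the finite set $\Sp(A)$, including $\mal$. This gives $c_{\mbe_0} = 0$.

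\emph{Spanning.} Take a nonzero character difference $E(f) = c(f(\mga) - f(\mde))$ with $\mga \neq \mde$ that vanishes on $A$, so $c \neq 0$.

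1. Vanishing on $A$ means $f(\mga) = f(\mde)$ for all $f \in A$. By the definition of the subalgebra spectrum, this places both $\mga$ and $\mde$ in $\Sp(A)$: each is a point having a distinct partner $\mbe$ with $f(\cdot) = f(\mbe)$ on $A$.
2. Rewrite the difference through $\mal$:
\[
	f(\mga) - f(\mde) = \bigl(f(\mal) - f(\mde)\bigr) - \bigl(f(\mal) - f(\mga)\bigr).
\]
Each bracket is either an element of $\mathcal{E}(\Sp(A),\mal)$, or identically zero when the point equals $\mal$. So $E$ lies in the span of $\mathcal{E}(\Sp(A),\mal)$.

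The only step needing care is the spectrum-membership argument in step 1, which follows directly from the definition.

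A further reading is that ``space of character differences'' means their restrictions to an intermediate algebra. In that case I would carry out the independence argument on $\Kx$, or on whichever algebra the statement intends. Independence on a subalgebra $B \supseteq A$ would instead use Corollary \ref{cor-merge_two_clusters} together with the fact that merging $|\Sp(A)|$ points into one cluster requires $|\Sp(A)|-1$ independent conditions. I expect this interpretive point, not any computation, to be the main obstacle; the proof on $\Kx$ via interpolation is routine.
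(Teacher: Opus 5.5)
Your proof is correct. The spanning step is exactly the paper's: vanishing on $A$ puts $\mga,\mde$ in $\Sp(A)$, and $f(\mga)-f(\mde)$ is rewritten through $\mal$ as a difference of two elements of $\mathcal{E}(\Sp(A),\mal)$.

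The difference is in how independence is obtained. The paper does not verify it directly. It invokes Corollary \ref{cor-merge_two_clusters}: kerneling $\Kx$ by a non-zero character difference merges exactly two clusters. So merging the $|\Sp(A)|$ points into one cluster takes $|\Sp(A)|-1$ character differences. Hence the space has dimension at least $|\Sp(A)|-1=|\mathcal{E}(\Sp(A),\mal)|$, and spanning then yields a basis.

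Your interpolation argument gets independence on $\Kx$ directly, from the fact that $\Sp(A)$ is a finite set of distinct points. Finiteness follows by induction on codimension from the corollary $\Sp(A)=\{\mal,\mbe\}\cup\Sp(B)$ stated after Corollary \ref{cor-trivial_derivations}. Your route is more elementary and self-contained. It bypasses Lemma \ref{lm-sep_conds} entirely. It also avoids the steps the paper's count leaves implicit:
\begin{itemize}
\item choosing a basis of the span consisting of character differences;
\item checking that each is non-zero on the successive kernels, by Lemma \ref{lm-intersect_lin_funcs};
\item noting that every element of $\mathcal{E}(\Sp(A),\mal)$ then vanishes on the final algebra, which forces a single cluster.
\end{itemize}
The paper's route, in exchange, phrases the count as cluster merging, which is the viewpoint it uses throughout.

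Drop your closing ``further reading''. Character differences are by definition functionals on $\Kx$, so there is no ambiguity. Independence on an arbitrary $B\supseteq A$ is also simply false: for $B=A$, every element of $\mathcal{E}(\Sp(A),\mal)$ restricts to $0$. The cluster-counting argument you associate with that reading is in fact the paper's proof of the very same independence on $\Kx$.
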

\begin{proof}
    First of, suppose that $L$ is a $\mga, \mde$-character difference vanishing
    on $A$. Then as $\mal, \mga, \mde \in \Sp(A)$, we have $L_{1} = f
    \rightarrow f(\mal) - f(\mga), L_{2} = f \rightarrow f(\mal) - f(\mga) \in
    \mathcal{E}(\Sp(A), \mal)$ and $L$ is a scalar multiple of $L_{1} - L_{2}$.
    \\ 

    Moreover, it follows from Corollary \ref{cor-merge_two_clusters} that we
    need kernel $\Kx$ by $\vert \Sp(A) \vert - 1$ different character
    differences to merge all the elements of $\Sp(A)$ into a single cluster.
    Thus the space of character differences vanishing on $A$ has dimension at
    least $\vert \Sp(A) \vert - 1$, which is the cardinality of
    $\mathcal{E}(\Sp(A), \mal)$.
    
\end{proof}

\begin{definition}
    When $N > 1 \in \N$, we define $\mathcal{D}_N(S)$ to be the set of linear
    functionals
	\[
		\mathcal{D}_N(S)
		=
		\left\{
		f 
		\rightarrow
		f^{(j)}_{d} (\mal)
		:
		\mal \in S,
		d \in \bm{d}^j,
		j \in [1..N-1]
		\right\}.
	\]
\end{definition}
Lastly we combine these to sets to define a special subalgebra.
\begin{definition}
    Let $S \subset \K^{n}$ be a finite set of spectral elements and $N > 1 \in
    \N$. We $\K$-space $Q_N(S)$ to be given by
    \[
		Q_N(S)
		=
		\bigcap_{L \in \mathcal{E}(S) \cup \mathcal{D}_N(S)}
		\Ker(L).
	\]
\end{definition}

\begin{lemma}
    $Q_N(S)$ is a $\K$-algebra.
\end{lemma}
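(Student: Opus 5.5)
Since $Q_N(S)$ is defined as an intersection of kernels of linear functionals, it is automatically a $\K$-subspace of $\Kx$. Every functional in $\mathcal{E}(S) \cup \mathcal{D}_N(S)$ vanishes on constants: character differences do, and so do derivatives of order $j \geq 1$. Hence $\K \subseteq Q_N(S)$. It therefore remains only to show closure under multiplication.

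We rewrite the defining conditions. Fix the distinguished $\mal \in S$ used in $\mathcal{E}(S,\mal)$, and let $f \in \Kx$. Then $f \in Q_N(S)$ if and only if there is a constant $c = f(\mal)$ such that, for every $\mbe \in S$:
\begin{itemize}[label={}]
\item $f(\mbe) = c$, and
\item all partial derivatives $f^{(j)}_d(\mbe)$ with $d \in \bm{d}^j$ and $1 \leq j \leq N-1$ vanish.
\end{itemize}
Equivalently, for each $\mbe \in S$ the polynomial $f - c$ has vanishing Taylor expansion at $\mbe$ up to order $N-1$, that is, $f - c \in \mathfrak{m}_{\mbe}^N$. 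So
\[
Q_N(S) = \K + \bigcap_{\mbe \in S} \mathfrak{m}_{\mbe}^{N}.
\]
Justifying this equivalence is the main step. It uses the standard fact that $g \in \mathfrak{m}_{\mbe}^N$ if and only if all partial derivatives of $g$ of order $< N$ vanish at $\mbe$, which follows by expanding $g$ in powers of $(x_i - \beta_i)$. Characteristic $0$ is needed here so that the Taylor coefficients are recovered from the derivatives.

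With this description, closure under products is immediate. Let $f = a + p$ and $g = b + q$ with $a, b \in \K$ and $p, q \in I := \bigcap_{\mbe} \mathfrak{m}_{\mbe}^N$. Then
\[
fg = ab + (aq + bp + pq).
\]
Since $I$ is an ideal, each of $aq$, $bp$ and $pq$ lies in $I$, so $fg \in \K + I = Q_N(S)$.

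As an alternative that avoids the ideal description, one could instead apply the General Leibniz Rule directly. For $f, g \in Q_N(S)$ and $U \in \bm{d}^j$ with $j \leq N-1$, every term in the expansion of $(fg)^{(j)}_U(\mbe)$ with $\emptyset \neq U' \subsetneq U$ vanishes, because both factors are derivatives of order between $1$ and $N-1$. The remaining terms give
\[
(fg)^{(j)}_U(\mbe) = f(\mbe)\,g^{(j)}_U(\mbe) + f^{(j)}_U(\mbe)\,g(\mbe) = 0.
\]
The character differences also vanish on $fg$, since $fg(\mbe) = f(\mal)g(\mal)$ for every $\mbe \in S$. This second route is elementary and is likely the cleanest to write down. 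I expect no real obstacle beyond careful bookkeeping of the multiset sums.
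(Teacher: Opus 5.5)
Your proposal is correct, and your second (Leibniz-rule) route is essentially the paper's argument without the induction. The paper inducts on $N$: each new functional in $\mathcal{D}_N(S)\setminus\mathcal{D}_{N-1}(S)$ restricts to a derivation on the algebra $Q_{N-1}(S)$, because the middle Leibniz terms vanish there, so $Q_N(S)$ is cut out of $Q_{N-1}(S)$ by subalgebra conditions. You instead expand $(fg)^{(j)}_U(\mbe)$ directly for $f,g\in Q_N(S)$. Every middle term there already involves derivatives of order between $1$ and $N-2$, so no induction is needed. Your version is shorter. The paper's version also shows that these functionals, ordered by derivative order, form a set of subalgebra conditions in the paper's filtration sense. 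Neither argument uses the characteristic.

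Your first route is genuinely different, and it yields more than the lemma. You identify $Q_N(S)=\K+\bigcap_{\mbe\in S}\mathfrak{m}_{\mbe}^N$, which, as you note, needs characteristic $0$. Powers of distinct maximal ideals are pairwise comaximal, so $\bigcap_{\mbe\in S}\mathfrak{m}_{\mbe}^N=\prod_{\mbe\in S}\mathfrak{m}_{\mbe}^N$. Since $P_N(\mbe)$ generates $\mathfrak{m}_{\mbe}^N$, this product is exactly the ideal $\Pi_N(S)\Kx$. Hence your description gives $Q'_N(S)=Q_N(S)$, which is Lemma~\ref{lm-Qprime=Q}, in one line. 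This bypasses the paper's much longer proof of that lemma via Theorem~\ref{th-gorin} and Lemma~\ref{lm-D_of_Q}.
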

\begin{proof}
    We prove this by induction on $N$. The base case when $N = 1$ is immediate
    since the elements of $\mathcal{E}(S) \cup \mathcal{D}_{1}(S)$ are
    subalgebra conditions on $\Kx$. \\ 

    Suppose now that $Q_{N - 1}$ is a $\K$-algebra. Every element in $D_N \in
    \mathcal{D}_{N}(S) \setminus \mathcal{D}_{N - 1}(S)$ becomes a subalgebra
    condition when restricted to $Q_{N - 1}(S)$, since the middle terms found
    by expanding $D_N$ via the General Leibniz rule vanish on $A$. As $Q_{N}(S)
    = Q_{N - 1}(S) \cap \ker(\mathcal{D}_{N}(S) \setminus \mathcal{D}_{N -
    1}(S))$, it follows that $Q_N(S)$ is a subalgebra of $Q_{N - 1}(S)$ and we
    are done.
    
\end{proof}

As we shall see, the Main Theorem follows fairly easily as long as we can show
that $A$ contains some $Q_N(\Sp(A))$, and that the derivations over any $Q_N$
can be written as in Theorem \ref{th-main}. 

\begin{lemma}\label{lm-A_contains_Q}
	Let $A \subset \Kx$ be a single cluster subalgebra of finite codimension.
	Then there exists some $N \in \N$ such that 
	\[
		Q_{N}(\Sp(A)) \subseteq A.
	\]
\end{lemma}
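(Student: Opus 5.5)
We argue by induction on the codimension of $A$, using the filtration provided by Theorem \ref{th-gorin}: write
\[
A = A_m \subsetneq A_{m-1} \subsetneq \ldots \subsetneq A_0 = \Kx,
\]
where $A_{i+1} = A_i \cap \Ker(L_i)$ and each $L_i$ is an $\mal_i,\mbe_i$-subalgebra condition on $A_i$. Two facts drive the argument. First, by the corollary describing $\Sp(A)$ after kerneling by a single condition, $\Sp(A_i)\subseteq \Sp(A)$ for every $i$, and every point that ever appears in some $L_i$ lies in $\Sp(A)$. Second, $Q_N(S)$ is decreasing in $N$ and decreasing in $S$ (enlarging $S$ adds conditions). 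So it suffices to prove the following statement: for each $i$ there is an $N_i$ with $Q_{N_i}(\Sp(A)) \subseteq A_i$. The case $i=0$ is trivial, and the case $i=m$ is the lemma.

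For the inductive step, suppose $Q := Q_{N}(\Sp(A)) \subseteq A_i$. We want to enlarge $N$ to some $N'$ so that $L_i$ vanishes on $Q_{N'}(\Sp(A))$. There are two cases.

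If $L_i$ is a character difference, it is a multiple of $f\mapsto f(\mal_i)-f(\mbe_i)$ with $\mal_i,\mbe_i\in\Sp(A)$. This functional is a difference of two elements of $\mathcal{E}(\Sp(A))$, so it already vanishes on $Q$.

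If $L_i$ is an $\mal_i$-derivation on $A_i$, its restriction to $Q$ is an $\mal_i$-derivation on $Q$. By Theorem \ref{th-dim_D}, $L_i$ vanishes on $\mathfrak{m}_{\mal_i}(Q)^2$. We therefore aim to show that $\mathfrak{m}_{\mal_i}(Q)^2$ contains $Q_{N'}(\Sp(A))$ for some $N'$; this is the main obstacle. Write $S=\Sp(A)$, let $I=\bigcap_{\mbe\in S}\mathfrak{m}_{\mbe}^{N}$, and note that $I\subseteq Q$. For the inclusion we want, first note that
\[
Q_{N'}(S)=\K\oplus\bigcap_{\mbe\in S}\mathfrak{m}_{\mbe}^{N'} .
\]
Here the $\mathcal{E}$ conditions force a common value at all points of $S$, and subtracting that constant lands in the ideal. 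So it is enough to show
\[
\bigcap_{\mbe\in S}\mathfrak{m}_{\mbe}^{2N} \subseteq I^2 \subseteq \mathfrak{m}_{\mal_i}(Q)^2 .
\]
The second inclusion holds because $I\subseteq \mathfrak{m}_{\mal_i}(Q)$, as elements of $I$ vanish at $\mal_i$. For the first, by the Chinese remainder theorem the ideals $\mathfrak{m}_{\mbe}^{k}$ for distinct $\mbe\in S$ are pairwise comaximal. Hence $\bigcap_\mbe \mathfrak{m}_\mbe^{k}=\prod_\mbe\mathfrak{m}_\mbe^{k}$, and therefore
\[
I^2=\prod_\mbe \mathfrak{m}_\mbe^{2N}=\bigcap_\mbe\mathfrak{m}_\mbe^{2N}.
\]
So $N'=2N$ works. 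This equality is the delicate point: one must check comaximality of $\mathfrak{m}_{\mbe}^{N}$ and $\mathfrak{m}_{\mga}^{N}$ for $\mbe\ne\mga$. This holds because their radicals are distinct maximal ideals.

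With both cases handled, the induction gives $N_{i+1}$ from $N_i$, and after at most $m$ steps (roughly $N = 2^{m}$, or at least finite) we obtain $Q_N(\Sp(A))\subseteq A$. The single-cluster hypothesis is not actually used beyond guaranteeing that the $\mathcal{E}$ conditions are meaningful. We also do not need the character differences in $\mathcal{E}(\Sp(A))$ to be subalgebra conditions on $A$ itself, only that they vanish on $Q$.
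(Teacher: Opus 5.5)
Your proof is correct, and it takes a different and shorter route than the paper.

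\textbf{The paper's route.} It splits the lemma into two steps.
\begin{itemize}
\item Lemma \ref{lm-ideal_subalg_in_A} shows $Q'_N(\Sp(A)) = \K \oplus \Pi_N(\Sp(A))\Kx \subseteq A$. It uses the same doubling $N \mapsto 2N$ that you use at derivation steps, after first reordering the filtration so that all character differences come first, and it assumes the derivations are non-trivial.
\item Lemma \ref{lm-Qprime=Q} shows $Q'_N(S) = Q_N(S)$. It does this via Theorem \ref{th-gorin} and Lemma \ref{lm-D_of_Q}, constructing explicit polynomials to check that no nonzero subalgebra condition on $Q_N(S)$ vanishes on $Q'_N(S)$.
\end{itemize}

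\textbf{Your route.} Note that $\Pi_N(S)\Kx = \prod_{\mbe\in S}\mathfrak{m}_{\mbe}^N$, and that by Taylor expansion (characteristic $0$) $Q_N(S) = \K \oplus \bigcap_{\mbe\in S}\mathfrak{m}_{\mbe}^N$. So Lemma \ref{lm-Qprime=Q} is exactly the identity $\bigcap = \prod$ for pairwise comaximal ideals. You replace that whole detour with this standard fact and never need $Q'_N$.

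Fixing $S=\Sp(A)$ from the start also means a character-difference step leaves $N$ unchanged wherever it occurs in the filtration. So you need neither the reordering nor the non-triviality assumption. As you observe, the single-cluster hypothesis also becomes unnecessary.

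\textbf{Trade-off.} The paper's argument stays inside its derivation-space machinery, at the cost of length. Yours imports one textbook fact from commutative algebra and gets a more general statement.

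\textbf{Small remarks.}
\begin{itemize}
\item You literally aim to put $Q_{N'}(S)$ inside $\mathfrak{m}_{\mal_i}(Q)^2$, which cannot hold because of the constants. The correct target is $\K + \mathfrak{m}_{\mal_i}(Q)^2$, using $L_i(1)=0$.
\item Comaximality is the hypothesis of the Chinese remainder theorem, not a consequence of it. Your argument via distinct maximal radicals is the right justification.
\item The vanishing of $L_i$ on $\mathfrak{m}_{\mal_i}(Q)^2$ follows directly from the Leibniz rule; Theorem \ref{th-dim_D} is not needed.
\end{itemize}
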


\begin{lemma}\label{lm-D_of_Q}
    Let $S \subset \K^{n}$ be a finite set of spectral elements, $\mal \in S$,
    and $N \in \N$. Then	
	\[
		\mathcal{D}_{\mal}(Q_N(S))
		=
		\langle
		\mathcal{D}_{2N}(S)
		\setminus
		\mathcal{D}_{N}(S)
		\rangle.
	\]
\end{lemma}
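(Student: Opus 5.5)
The proof has two parts. First, every functional in $\mathcal{D}_{2N}(S)\setminus\mathcal{D}_N(S)$ restricts to an $\mal$-derivation on $Q_N(S)$. Second, these restrictions span $\mathcal{D}_{\mal}(Q_N(S))$, which we get by bounding $\dim \mathcal{D}_{\mal}(Q_N(S))$ through Theorem \ref{th-dim_D}.

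\textbf{Inclusion $\supseteq$.} Let $D: f\mapsto f^{(k)}_d(\mal_i)$ with $\mal_i\in S$, $N\le k\le 2N-1$ and $d\in\bm{d}^k$. For $f,g\in Q_N(S)$, expand $(fg)^{(k)}_d(\mal_i)$ by the General Leibniz Rule. Every term $f^{(|U'|)}_{U'}(\mal_i)\,g^{(k-|U'|)}_{d\setminus U'}(\mal_i)$ with $0<|U'|<k$ has one factor of order between $1$ and $N-1$, since the two orders sum to $k\le 2N-1$. That factor vanishes because $f,g\in\ker\mathcal{D}_N(S)$. Only the extreme terms remain, so
\[
(fg)^{(k)}_d(\mal_i)=f^{(k)}_d(\mal_i)g(\mal_i)+f(\mal_i)g^{(k)}_d(\mal_i).
\]
All points of $S$ lie in one cluster of $Q_N(S)$ because of $\mathcal{E}(S)$, so $f(\mal_i)=f(\mal)$ and $g(\mal_i)=g(\mal)$. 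Hence $D$ is an $\mal$-derivation, and so is every linear combination of such functionals.

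\textbf{Inclusion $\subseteq$ via dimensions.} By Theorem \ref{th-dim_D}, $\dim\mathcal{D}_{\mal}(Q_N(S))=\dim(\mathfrak{m}/\mathfrak{m}^2)$, where $\mathfrak{m}=\mathfrak{m}_{\mal}(Q_N(S))$. By construction $\mathfrak{m}=\bigcap_{\mal_i\in S}\mathfrak{m}_{\mal_i}^N$, so it is the set of polynomials vanishing to order $N$ at every point of $S$. I claim that
\[
\mathfrak{m}^2\supseteq I:=\bigcap_{\mal_i\in S}\mathfrak{m}_{\mal_i}^{2N}.
\]
If so, $\dim(\mathfrak{m}/\mathfrak{m}^2)\le\dim(\mathfrak{m}/I)$. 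By the Chinese Remainder Theorem, $\Kx/I\cong\prod_i \Kx/\mathfrak{m}_{\mal_i}^{2N}$, so $\mathfrak{m}/I\cong\prod_i \mathfrak{m}_{\mal_i}^N/\mathfrak{m}_{\mal_i}^{2N}$. The functionals in $\mathcal{D}_{2N}(S)\setminus\mathcal{D}_N(S)$ form, up to scaling, a dual basis to the monomials $(\mx-\mal_i)^{\bm a}$ with $N\le|\bm a|\le 2N-1$ placed at each point separately. They are therefore linearly independent, and their number equals $\dim(\mathfrak{m}/I)$. This gives $\dim\mathcal{D}_{\mal}(Q_N(S))\le|\mathcal{D}_{2N}(S)\setminus\mathcal{D}_N(S)|$. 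The restrictions of these functionals to $Q_N(S)$ stay independent, because each vanishes on $I\subseteq Q_N(S)$ and they are already independent on $Q_N(S)/I\cong\mathfrak{m}/I\oplus\K$. Together with the first part, this forces equality.

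\textbf{Main obstacle: showing $I\subseteq\mathfrak{m}^2$.} I would work locally via CRT. First choose $e_i\in\Kx$ with $e_i\equiv 1 \pmod{\mathfrak{m}_{\mal_i}^{2N}}$ and $e_i\equiv 0 \pmod{\mathfrak{m}_{\mal_j}^{2N}}$ for $j\ne i$. Then $e_i-e_i(\mal)\cdot 1$ is not automatically in $\mathfrak{m}$, so instead use the elements $u_i=(e_i^2-e_i)$-adjusted products. More concretely, every $h\in I$ decomposes as $h=\sum_i e_i h$, and locally at $\mal_i$ each $e_ih$ lies in $\mathfrak{m}_{\mal_i}^{2N}$. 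That local ideal is generated by products $p\,q$ of monomials $p,q\in\mathfrak{m}_{\mal_i}^N$. Each such factor must be globalized to an element of $\mathfrak{m}$ by multiplying with a suitable idempotent-like $e_i$ (possibly $e_i^{N}$), so that it vanishes to order $N$ at the other points. Then one checks that $e_i^2pq\equiv e_ih \pmod I$. This yields $I\subseteq\mathfrak{m}^2+I'$ with $I'$ of higher vanishing order. A Krull/Artinian-style argument finishes it: $I/\mathfrak{m}^2$ is finite-dimensional and $\mathfrak{m}^2\supseteq\bigcap_i\mathfrak{m}_{\mal_i}^{M}$ for large $M$, so we can iterate down. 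This bookkeeping with the idempotents is the step I expect to need the most care.
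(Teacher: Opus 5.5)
Your overall plan works, and most of it is correct: the Leibniz argument for $\supseteq$, the identification $\mathfrak{m}=\bigcap_i\mathfrak{m}_{\mal_i}^N$, the CRT count, and the independence of the restricted functionals. The gap is the step you flag as the main obstacle, $I\subseteq\mathfrak{m}^2$, which your sketch does not establish.

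\begin{itemize}
\item The congruence $e_i^2pq\equiv e_ih \pmod I$ carries no information, since both sides already lie in $I$, and $I$ is what you are trying to push into $\mathfrak{m}^2$.
\item The inclusion $I\subseteq\mathfrak{m}^2+I'$ is asserted but never derived.
\item The closing ``iterate down'' step presupposes $\mathfrak{m}^2\supseteq\bigcap_i\mathfrak{m}_{\mal_i}^M$ for large $M$. That is unproved and of the same kind as the claim itself.
\end{itemize}

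The missing ingredient is comaximality. Since $\mathfrak{m}$ is an ideal of $\Kx$, its square in $Q_N(S)$ (the span of products) is its ideal square in $\Kx$. The ideals $\mathfrak{m}_{\mal_i}^{N}$, and likewise the $\mathfrak{m}_{\mal_i}^{2N}$, are pairwise comaximal, so their intersections equal their products:
\[
\mathfrak{m}^2=\Bigl(\prod_i\mathfrak{m}_{\mal_i}^{N}\Bigr)^2=\prod_i\mathfrak{m}_{\mal_i}^{2N}=\bigcap_i\mathfrak{m}_{\mal_i}^{2N}=I .
\]
Your idempotents also work directly, with no iteration. For each $i$, write $h\in I$ as $h=\sum_k r_{ik}p_{ik}q_{ik}$ with $p_{ik},q_{ik}\in P_N(\mal_i)$ and $r_{ik}\in\Kx$. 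Then $r_{ik}e_ip_{ik}$ and $e_iq_{ik}$ lie in $\mathfrak{m}$, so $e_i^2h\in\mathfrak{m}^2$. Also $1-\sum_ie_i^2\in I\subseteq\mathfrak{m}$. Hence $h=\sum_ie_i^2h+(1-\sum_ie_i^2)h\in\mathfrak{m}^2$.

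With that step filled in, your proof is complete and takes a different route from the paper. The paper works point by point:
\begin{itemize}
\item For $S=\{\mal\}$ it identifies $Q_N(\{\mal\})=\K\oplus\mathfrak{m}_{\mal}^N$. It bounds $\dim\mathcal{D}_{\mal}$ by the number of monomials in $\mx-\mal$ of degree $N,\dots,2N-1$, which form a generating set.
\item It then uses Theorem~\ref{th-conditions_on_other_clusters} to show that imposing the derivative conditions at the other points leaves each $\mathcal{D}_{\mal_i}$ unchanged, up to restriction.
\item Finally it applies Theorem~\ref{th-connect_clusters_direct_sum_deriv_space} once per character difference in $\mathcal{E}(S)$ to assemble the direct sum.
\end{itemize}
You instead compute the whole cotangent space of $Q_N(S)$ at once through the Chinese Remainder Theorem. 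This needs only Theorem~\ref{th-dim_D} and elementary commutative algebra, and it gives $\mathfrak{m}_{\mal}(Q_N(S))^2=\bigcap_i\mathfrak{m}_{\mal_i}^{2N}$ as a by-product.

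The paper's route avoids any ideal-theoretic computation across several points. However, it relies on the two structural theorems about how derivation spaces change under kerneling, which are the most delicate results of the paper. It also needs an implicit ordering of the conditions so those theorems can be applied one condition at a time. Your argument is shorter and self-contained. The paper's instead exercises machinery that applies to arbitrary finite-codimension subalgebras, not just those cut out by such explicit ideals.
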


We proceed by giving a proof Theorem \ref{th-main} using the previous two
lemmas.

\begin{proof}[Proof of The Main Theorem of $\mal$-Derivations.]
	
    Consider first the case when $A$ is a single cluster subalgebra. Let $\mal
    \in \Sp(A)$ and $D$ be an arbitrary $\mal$-derivation on $A$. By Lemma
    \ref{lm-A_contains_Q}, there exists some $N'$ such that $Q_{N'}(\Sp(A))
    \subseteq A$. Moreover, we know from Lemma \ref{lm-D_of_Q} that
    $\mathcal{D}_{\mal}(Q_{N'}(\Sp(A))) \subset \langle
    \mathcal{D}_{2N'}(\Sp(A)) \rangle$. Now, as $D$ must be an
    $\mal$-derivation when restricted to $Q_{N'}(\Sp(A))$ it follows that
    $\restr{D}{Q_{N'}(\Sp(A))}$ lies in $\langle \mathcal{D}_{2N'}(\Sp(A))
    \rangle$ and can be written as in equation (\ref{eq-main_th}) with $N = 2N'$.
    Moreover, it follows from the construction of $Q_{N'}(\Sp(A))$ that
    $Q_{N'}(\Sp(A))$ may be obtained from $A$ as the intersections of kernels
    of linear functionals. These may also be written as in equation
    (\ref{eq-main_th}), whence it follows from Lemma \ref{lm-intersect_lin_funcs}
    that $D$ can be written as in equation (\ref{eq-main_th}) as well. \\
	
    Generalization to arbitrary subalgebras $A \subset \Kx$ of finite
    codimension (i.e, not necessarily single cluster), follows using Theorem
    \ref{th-conditions_on_other_clusters}: Let $C \subset \Sp(A)$ be some
    cluster of $A$. Let $\mathcal{L}$ be the set of character differences and
    derivations that describe $A$ which pertain to $C$, and let $A' =
    \bigcap_{L \in \mathcal{L}} \Ker(L)$. Let $\mal \in C$. Then
    $\mathcal{D}_{\mal}(A) = \restr{\mathcal{D}_{\mal}(A')}{A}$ by Theorem
    \ref{th-conditions_on_other_clusters}, and as all $\mal$-derivations in
    $\mathcal{D}_{\mal}(A')$ may be written as in equation (\ref{eq-main_th})
    (assuming we proved the Main Theorem for single cluster subalgebras), they
    can also be written in this desired way when restricted to $A$ as well.

\end{proof}

The remainder of this section will be devoted to proving lemmas
\ref{lm-A_contains_Q} and \ref{lm-D_of_Q}. The route we will take is to first
define a subalgebra $Q'_{N}(\Sp(A))$ which is easily shown to be contained in
$A$ for some $N \in \N$. After this we will spend quite a bit of effort to show
that $Q'_N(\Sp(A)) = Q_N(\Sp(A))$ which will result in a proof of Lemma
\ref{lm-A_contains_Q}. We will obtain a proof of Lemma \ref{lm-D_of_Q} along
the way as well.

\begin{definition}
    Let $\mal \in \K^{n}$, $P(\mal) = \{x_i - \alpha_i : \alpha_i \in \mal\}$,
    and $P_N(\mal)$ be the subset of $P(\mal)_{\text{mon}}$ consisting of all
    monomials in elements of $P(\mal)$ of total degree $N$. For example,
	\[
		P_3(2, 1)
		=
		\{
		(x_1 - 2)^3,
		(x_1 - 2)^2(x_2 - 1),
		(x_1 - 2)(x_2 - 1)^2,
		(x_2 - 1)^3
		\}.
	\]
	If $|\mal| = n$, then
	\[
		|P_N(\mal)| = \binom{n + N - 1}{N}.
	\]
    Elaborating further, if $S \subset \K^{n}$ is a set of spectral elements we
    will use the notation $\Pi_N(S)$ to denote the set consisting of all
    possible product combinations of elements from the sets $P_N(\mal)$ for
    every $\mal \in S$. For example, if $S = \{(2,1), (0, 0), (1, 3)\}$, then
    every element of $\Pi_N(S)$ will be a product of one polynomial in
    $P_N(2,1)$, one polynomial in $P_N(0, 0)$, and one polynomial in $P_N(1,
    3)$, and every possible combination of polynomials drawn from the three
    sets, will exist as a product in $\Pi_N(S)$. I.e, the set will have
    cardinality 
	\[
		\vert \Pi_N(S) \vert
		= 
        \binom{n + N - 1}{N}^{\vert S \vert},
	\]
    and all polynomials in $\Pi_N(S)$ will have total degree $Ns$. A quick
    example is given by
	\begin{align*}
		\Pi_2(\{(0, 0), (0, 1)\})
		= & 
		\{ 
		x_1^4, x_1^3 (x_2 - 1), x_1^2 (x_2 - 1)^2,                \\
		  & x_1^3 x_2, x_1^2 x_2 (x_2 - 1), x_1 x_2 (x_2 - 1)^2   \\
		  & x_2^2 x_1^2, x_2^2 (x_2 - 1) x_1, x_2^2 (x_2 - 1)^2 
		\}.
	\end{align*}
	
	Let $\Pi_N(S) \Kx$ be the ideal generated by $\Pi_N(S)$ in $\Kx$. We define
	$Q'_N$ to be the subalgebra 
	\[
		Q'_N(S)
		=
        \K
        \oplus
		\Pi_N(S) \Kx.
	\]
\end{definition}

\begin{lemma}\label{lm-D_N_vanish_on_Qprim_N}
	Let $f \in \Kx, \pi \in P_N(\mal)$ and $D \in
		\mathcal{D}_{N}(\{\mal\})$. Then $D(f\pi) = 0$.
\end{lemma}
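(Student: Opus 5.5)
The plan is to expand $D(f\pi)$ with the General Leibniz Rule and show that every resulting term vanishes at $\mal$.

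By the definition of $\mathcal{D}_N(\{\mal\})$ we have $D(h) = h^{(j)}_{d}(\mal)$ for some $j \in [1..N-1]$ and some $d \in \bm{d}^j$. Linearity would handle spans as well, but each element of the set already has this single-term form. Expanding with the General Leibniz Rule gives
\[
	D(f\pi)
	=
	\sum_{U' \in \mathcal{P}(d)}
	f^{(|U'|)}_{U'}(\mal)\,
	\pi^{(j - |U'|)}_{d \setminus U'}(\mal).
\]
It therefore suffices to show that every partial derivative of $\pi$ of order $k \leq j \leq N-1$ vanishes at $\mal$.

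For this I write $\pi = \prod_{i=1}^{n} (x_i - \alpha_i)^{e_i}$ with $\sum_i e_i = N$. The partial derivatives $\partial/\partial x_i$ act independently on the factors, because each factor depends on only one variable. Applying a pure partial derivative $d' \in \bm{d}^k$ that contains $x_i$ with multiplicity $k_i$ gives
\[
	\prod_{i} \frac{e_i!}{(e_i - k_i)!}(x_i - \alpha_i)^{e_i - k_i},
\]
where the whole expression is $0$ if some $k_i > e_i$. The multisets $d \setminus U'$ are sub-multisets of $d$, so they consist only of the pure variables $x_i$ and this formula applies to them. When $k = \sum k_i < N = \sum e_i$, some $e_i - k_i$ is positive. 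Evaluating at $\mal$ then kills that factor, so every derivative of $\pi$ of order less than $N$ vanishes at $\mal$.

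Combining the two steps, each term in the Leibniz expansion contains a factor $\pi^{(j-|U'|)}_{d\setminus U'}(\mal)$ with $j - |U'| \leq N-1 < N$. That factor is $0$, so $D(f\pi) = 0$.

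I expect no real obstacle. The only point needing care is the multiset bookkeeping in the Leibniz expansion, which amounts to observing that every sub-multiset of $d$ has size at most $j \leq N-1$. An alternative I could fall back on is to note that $f\pi \in \mathfrak{m}_{\mal}^N$ and that derivatives of order $<N$ of elements of $\mathfrak{m}_{\mal}^N$ vanish at $\mal$, which can be proved by induction on the order using the Leibniz rule. The direct computation above is cleaner and is the one I would write.
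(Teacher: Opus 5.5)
Your proposal is correct and is essentially the paper's proof: you expand $D(f\pi)$ with the General Leibniz Rule and observe that every derivative of $\pi$ of order less than $N$ vanishes at $\mal$. The only difference is that you check this vanishing with the explicit formula for pure partial derivatives of $\prod_i (x_i-\alpha_i)^{e_i}$, whereas the paper notes that each term of such a derivative is still divisible by one of the $N$ linear factors that vanish at $\mal$.
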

\begin{proof}
    By the construction of $\mathcal{D}_{N}(\{\mal\})$, $D$ is given as $D(f) =
    f^{(j)}_{d}(\mal)$ for some $d \in \bm{d}^{j}$ and $j \in [1.. N - 1]$. By the
    generalized Leibniz rule we have 
	\begin{align*}
		D(f \pi)
		 & =
		\sum_{d' \in \mathcal{P}(d)}
		f^{(j - \vert d'\vert)}_{d \setminus d'}(\mal)
		\pi^{(\vert d' \vert)}_{d'}(\mal),
	\end{align*}
    from which we obtain $D(f \pi) = 0$ using the fact that $\pi^{(\vert d'
    \vert)}_{d'}(\mal) = 0$ for all $d'$. Indeed, $\pi$ is a product of $N$
    linear factors which vanish on $\mal$, hence as $\vert d' \vert < N$, any
    $\vert d' \vert$-th derivative of $\pi$ will be a sum where each term is
    divisible by at least one such factor. 
    
\end{proof}

It follows that $Q'_N \subset Q_N$.

\begin{corollary}\label{cor-Qprime_in_Q}
    Let $S \subset \K^{n}$ be a set of spectral elements. Then $Q'_N(S)
    \subseteq Q_N(S)$.
\end{corollary}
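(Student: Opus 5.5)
Since $Q_N(S)$ is the intersection of the kernels of all functionals in $\mathcal{E}(S) \cup \mathcal{D}_N(S)$, the inclusion $Q'_N(S) \subseteq Q_N(S)$ amounts to showing that every element of $Q'_N(S)$ is killed by each such functional. Every element of $Q'_N(S) = \K \oplus \Pi_N(S)\Kx$ has the form $c + \sum_k f_k \pi_k$ with $c \in \K$, $f_k \in \Kx$ and $\pi_k \in \Pi_N(S)$. I will treat the constant part and the ideal part separately and use linearity of each functional.

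\textbf{The constant part.} A constant $c$ lies in the kernel of every functional in play. A character difference $f \mapsto f(\mal) - f(\mbe)$ sends $c$ to $c - c = 0$. A derivative evaluation $f \mapsto f^{(j)}_d(\mal)$ with $j \geq 1$ sends $c$ to $0$.

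\textbf{The character differences.} Take $\pi \in \Pi_N(S)$ and $\mga \in S$. By construction $\pi$ contains a factor from $P_N(\mga)$, which is a product of $N \geq 1$ linear factors $x_i - \gamma_i$, so $\pi(\mga) = 0$. Therefore $(f\pi)(\mga) = 0$ for every $\mga \in S$ and every $f \in \Kx$. It follows that every character difference $f \mapsto f(\mal) - f(\mbe)$ with $\mal, \mbe \in S$ vanishes on $f\pi$, and so on the whole ideal $\Pi_N(S)\Kx$.

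\textbf{The derivative evaluations.} Take $D \in \mathcal{D}_N(S)$, so $D(f) = f^{(j)}_d(\mal)$ for some $\mal \in S$, $d \in \bm{d}^j$ and $j \leq N-1$. This means $D \in \mathcal{D}_N(\{\mal\})$. Again $\pi \in \Pi_N(S)$ factors as $\pi = \pi_{\mal} \cdot r$, where $\pi_{\mal} \in P_N(\mal)$ and $r$ is the product of the factors belonging to the other points of $S$. Then $f\pi = (fr)\pi_{\mal}$, and Lemma \ref{lm-D_N_vanish_on_Qprim_N}, applied to the polynomial $fr$, gives $D(f\pi) = 0$.

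\textbf{Conclusion.} Each functional is linear and vanishes on both the constant part and the ideal part, so it vanishes on all of $Q'_N(S)$. This gives $Q'_N(S) \subseteq Q_N(S)$. There is no real obstacle in this argument. The only point needing care is the factorisation of $\pi$ that isolates the $P_N(\mal)$-factor, which is exactly what lets the single-point Lemma \ref{lm-D_N_vanish_on_Qprim_N} apply to a product involving several points.
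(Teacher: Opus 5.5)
Your proof is correct and follows the paper's argument: the character differences in $\mathcal{E}(S)$ vanish because every element of $\Pi_N(S)$ has a root at each point of $S$, and the derivative evaluations in $\mathcal{D}_N(S)$ vanish by Lemma \ref{lm-D_N_vanish_on_Qprim_N}. You also spell out the constant summand and the factorisation $\pi = \pi_{\mal} r$, which the paper leaves implicit.
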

\begin{proof}
	It is trivial to see that $\mathcal{E}(S)$ kills all of $Q'_N(S)$ since the
	generators of $Q'_N(s)$ all have roots in every element of $S$. It follows
	from the prior lemma that all elements in $\mathcal{D}_N(S)$ vanish on
	$Q'_N(s)$.
	
\end{proof}

We now show that some $Q'_N$ is contained in $A$.

\begin{lemma}\label{lm-ideal_subalg_in_A}
	Let $A \subset \Kx$ be a single cluster subalgebra of finite codimension.
	Then there exists some $N \in \N$ such that $Q'_N(\Sp(A)) \subseteq A$.
\end{lemma}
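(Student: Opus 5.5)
Write $S = \Sp(A)$. The plan is to show that $A$ contains a nonzero ideal $I$ of $\Kx$ with $V(I) \subseteq S$. Since $S$ is finite, the Nullstellensatz then lets us conclude that $\Pi_N(S)\Kx \subseteq I \subseteq A$ for $N$ large. Because $\K \subseteq A$, this gives $Q'_N(S) = \K \oplus \Pi_N(S)\Kx \subseteq A$.

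\textbf{Step 1: the conductor.} I would take $I$ to be the conductor
\[
I = \{ f \in \Kx : f\Kx \subseteq A \},
\]
which is an ideal of $\Kx$ contained in $A$. It has finite codimension: $\Kx/A$ is a finite-dimensional $A$-module, and $I$ is the annihilator of that module, i.e.\ the kernel of $\Kx \to \operatorname{End}_\K(\Kx/A)$, $f \mapsto (\text{multiplication by } f)$. That target has finite dimension. So $V(I)$ is finite, and every $f$ in $\sqrt{I}$ satisfies $f^k \in I$ for some $k$.

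\textbf{Step 2: $V(I) \subseteq S$.} Suppose $\mga \in V(I)$ but $\mga \notin S$. We need to contradict this. Since $\mga \notin \Sp(A)$, the point $\mga$ is separated from every other point by $A$, and the derivations at $\mga$ of elements of $A$ span all directions. I would construct the contradiction through localization. Finite codimension gives an $h \in I$ with $h$ nonzero; I want such an $h$ with $h(\mga) \neq 0$.

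To get it, I would use the filtration from Theorem \ref{th-gorin}, and I expect this is where the main work lies. Each step kills a condition involving points of $S$: character differences at points of $S$, and $\mal$-derivations whose points lie in $S$ by the spectrum corollary. By Corollary \ref{cor-trivial_derivations} and Theorem \ref{th-conditions_on_other_clusters}, the derivations at $\mga$ never get involved. Hence $A \supseteq \Kx \cap \ker(\text{conditions supported at } S)$.

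I would then argue by induction along the filtration that $A_i$ contains $\K + J_i$, where $J_i$ is an ideal with $V(J_i) \subseteq S$. This holds for $A_0 = \Kx$. For the inductive step, given $L_i$ on $A_i$ with points in $S$, take $J_{i+1} = J_i \cdot \mathfrak{m}_{\mal}^2\mathfrak{m}_{\mbe}^2$, or a suitable power. The task is to check that $L_i$ kills $J_{i+1}$.

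For a character difference this uses $L_i(fg) = f(\mal)L_i(g) + g(\mbe)L_i(f)$. Write an element of $J_i\mathfrak{m}_{\mal}\mathfrak{m}_{\mbe}$ as a product of elements of $A_i$ vanishing at both points. This requires that $J_i\mathfrak{m}_{\mal}\mathfrak{m}_{\mbe}$ be generated, as a vector space, by products $ab$ with $a, b \in \K + J_i$ and $a(\mal) = b(\mbe) = 0$. Making this true is achieved by squaring the ideal, e.g.\ $J_{i+1} = (J_i \mathfrak{m}_{\mal}\mathfrak{m}_{\mbe})^2$ together with a Leibniz expansion.

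For an $\mal$-derivation the same trick works: $L_i$ kills $(\mathfrak{m}_{\mal}(A_i))^2$, so it kills $J_{i+1} = (J_i\mathfrak{m}_{\mal})^2$, provided $J_i\mathfrak{m}_{\mal} \subseteq \mathfrak{m}_{\mal}(A_i)$, which holds.

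\textbf{Step 3: conclusion.} After $m$ steps, $A \supseteq \K + J_m$ with $V(J_m) \subseteq S$. Then $\sqrt{J_m} \supseteq \prod_{\mal \in S}\mathfrak{m}_{\mal}$, so some power of this product lies in $J_m$. Each element of $\Pi_N(S)$ is a product of one element of $\mathfrak{m}_{\mal}^N$ for each $\mal \in S$, hence lies in $\prod_{\mal \in S}\mathfrak{m}_{\mal}^N \subseteq J_m$ for $N$ large. This proves $Q'_N(S) \subseteq A$.

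This inductive route makes the conductor of Step 1 unnecessary. The main obstacle is the careful verification in Step 2 that the product ideals are killed by $L_i$, specifically that elements of $J_i$ really lie in $A_i$ and in the right maximal ideals when combined.
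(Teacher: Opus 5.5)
Your proof is correct. It differs from the paper's mainly in its bookkeeping.

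Both arguments run along a filtration $\Kx = A_0 \supsetneq A_1 \supsetneq \cdots \supsetneq A_m = A$ from Theorem \ref{th-gorin}. The derivation step is the same device in both: an $\mal$-derivation on $A_i$ kills $\mathfrak{m}_{\mal}(A_i)^2$, so the square of an ideal of $\Kx$ contained in $\mathfrak{m}_{\mal}(A_i)$ survives into $A_{i+1}$.

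The paper tracks the specific ideals $\Pi_N(\Sp(A))\Kx$. To get a base case, it asserts without proof that all character differences can be applied before any derivation, which relies on the single-cluster hypothesis. It then doubles $N$ at each derivation step.

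You instead track arbitrary ideals $J_i \subseteq A_i$ with $V(J_i) \subseteq S$, and treat both kinds of condition uniformly through the bimodule Leibniz rule. For $a \in J_i\mathfrak{m}_{\mal}$ and $b \in J_i\mathfrak{m}_{\mbe}$ you have $a, b \in A_i$ and $L_i(ab) = a(\mal)L_i(b) + L_i(a)b(\mbe) = 0$. So already $J_{i+1} = (J_i\mathfrak{m}_{\mal})(J_i\mathfrak{m}_{\mbe})$ works. For a genuine character difference even $J_i\mathfrak{m}_{\mal}\mathfrak{m}_{\mbe}$ works, since it kills everything vanishing at both points.

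What your route buys: it needs no reordering and never uses the single-cluster hypothesis, so you actually prove the inclusion for every subalgebra of finite codimension. Beyond Theorem \ref{th-gorin}, the only input is that the points of each $L_i$ lie in $\Sp(A_{i+1}) \subseteq \Sp(A)$. This follows from the definition of $\Sp$ and, for a derivation at a point not yet in the spectrum, from Corollary \ref{cor-trivial_derivations}.

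What the paper's route buys: it works directly with the sets $\Pi_N$ needed later, and it produces $N = 2^k$, with $k$ the number of derivation steps, with no extra argument. Your Step 3 does not need the Nullstellensatz either, though. By construction $J_m$ is a product of powers of the ideals $\mathfrak{m}_{\mal}$, $\mal \in S$, so it contains $\Pi_N(S)\Kx$ as soon as $N$ is at least the largest exponent.

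The \emph{main obstacle} you flag at the end is not one. $J_i$ is an ideal of $\Kx$ contained in $A_i$, so $J_i\mathfrak{m}_{\mal} \subseteq J_i \subseteq A_i$ and its elements vanish at $\mal$; that is all the verification uses.

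You should, however, delete Step 1 and the unfinished localization paragraph opening Step 2. Besides being unused, the finiteness argument for the conductor is misstated. Multiplication by $f \in \Kx \setminus A$ does not induce an endomorphism of $\Kx/A$. The map should be $A \to \operatorname{End}_{\K}(\Kx/A)$, whose kernel is the conductor.
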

\begin{proof}
	We will prove the lemma by induction on the codimension of $A$. For our
	induction step, we will only consider kerneling by
	$\mal$-derivations. We can do this since we can kernel by all character
	differences before we start kerneling by $\mal$-derivations. This
	will require our base case to treat a subalgebra obtained by kerneling 
	with any amount of character differences. \\
	
	Consider the case of a single cluster subalgebra $A$ which is obtained from
	$\Kx$ by kerneling by character differences only. Let $N = 1$ and note that
	any $\pi \in \Pi_1(\Sp(A))$ has a root in every spectral element. Hence
	$E(f\pi) = 0$ for all $\pi \in \Pi_1(\Sp(A)), f \in \Kx$ and any character
	difference $E$ that holds in $A$. It follows that $Q'_1(\Sp(A)) \subseteq
		A$. \\
	
	Moving on to the induction step, let $A' \subseteq \Kx$ be a single cluster
	subalgebra of finite codimension such that $Q'_{N'}(\Sp(A')) \subseteq A'$
	for some $N' \in \N$. Let $A$ be obtained from $A'$ as the kernel of some
	non-trivial $\mal$-derivation $D$. Note that $\Sp(A) = \Sp(A')$ as $D$ is
	assumed to be a non-trivial $\mal$-derivation. We set $N = 2N'$. For any
	$\mbe \in \Sp(A) = \Sp(A')$, we have that each polynomial in $P_N(\mbe)$
	can be written as a product of two polynomials in $P_{N'}(\mbe)$. Thus each
	$\pi \in \Pi_N(\Sp(A))$ can be written $\pi = \pi_1 \pi_2$ for $\pi_1,
		\pi_2 \in \Pi_{N'}(\Sp(A'))$. It follows that $f \pi  = (f\pi_1) (\pi_2)
		\in \mathfrak{m}_{\mal}(A')^2$ whence $D(f \pi) = 0$ and $f \pi \in A$ for
	all $\pi \in \Pi_N(\Sp(A)), f \in \Kx$. It follows that $Q'_N(\Sp(A))
		\subseteq A$. \\

\end{proof}

We now show that $Q'_N(S) = Q_N(S)$ in the case when $S = \{\mal\}$ consists of
a single element.

\begin{lemma}
	Let $\mal \in \K^n$. Then $Q'_N(\{\mal\}) = Q_N(\{\mal\})$.
\end{lemma}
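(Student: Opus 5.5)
For a single point $S=\{\mal\}$ the set $\mathcal{E}(S)$ is empty, so
\[
Q_N(\{\mal\}) = \bigcap_{j=1}^{N-1}\bigcap_{d\in\bm{d}^j}\Ker\bigl(f\mapsto f^{(j)}_{d}(\mal)\bigr),
\]
while $Q'_N(\{\mal\}) = \K\oplus P_N(\mal)\Kx = \K \oplus \mathfrak{m}_{\mal}^N$. Indeed, $P_N(\mal)$ is the set of degree-$N$ monomials in the linear forms $x_i-\alpha_i$, and these generate $\mathfrak{m}_{\mal}^N$. The inclusion $Q'_N(\{\mal\})\subseteq Q_N(\{\mal\})$ is Corollary \ref{cor-Qprime_in_Q}, so the plan is to prove the reverse inclusion directly by a Taylor expansion argument. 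The translation $x_i\mapsto x_i+\alpha_i$ is a $\K$-algebra automorphism that commutes with partial derivatives, so we may assume $\mal=\bm{0}$ if convenient.

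The key step is as follows. Take $f\in Q_N(\{\mal\})$ and expand it in the basis of monomials $(\mx-\mal)^{\bm{a}}=\prod_i (x_i-\alpha_i)^{a_i}$ of $\Kx$:
\[
f=\sum_{\bm{a}} c_{\bm{a}}(\mx-\mal)^{\bm{a}}.
\]
For a multiset $d\in\bm{d}^j$ with multiplicity vector $\bm{b}$ (so $|\bm{b}|=j$), the evaluation $\bigl((\mx-\mal)^{\bm{a}}\bigr)^{(j)}_{d}(\mal)$ is nonzero exactly when $\bm{a}=\bm{b}$, and then it equals $\prod_i b_i!$. This holds because characteristic $0$ makes the factorials nonzero.

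It follows that $f^{(j)}_d(\mal)=\bigl(\prod_i b_i!\bigr)\,c_{\bm{b}}$. Vanishing of all these functionals for $1\le j\le N-1$ forces $c_{\bm{a}}=0$ whenever $1\le|\bm{a}|\le N-1$. Hence $f=c_{\bm{0}}+\sum_{|\bm{a}|\ge N}c_{\bm{a}}(\mx-\mal)^{\bm{a}}$.

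Each monomial with $|\bm{a}|\ge N$ is divisible by some element of $P_N(\mal)$, since we can choose a sub-multiindex of total degree exactly $N$. The remaining sum is finite, so it lies in $P_N(\mal)\Kx$. Therefore $f\in\K\oplus P_N(\mal)\Kx=Q'_N(\{\mal\})$.

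There is no serious obstacle. The only points needing care are the dual-basis computation, which relies on characteristic $0$ and on the fact that derivatives of order less than $|\bm{a}|$ of $(\mx-\mal)^{\bm{a}}$ vanish at $\mal$ unless the multiindex matches (the same reasoning as in Lemma \ref{lm-D_N_vanish_on_Qprim_N}), and the observation that the sum $\K+\Pi_N\Kx$ is direct because every element of $P_N(\mal)\Kx$ vanishes at $\mal$.
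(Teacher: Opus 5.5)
Your proposal is correct and is essentially the paper's argument. Both expand in the shifted monomial basis $(\mx-\mal)^{\bm{a}}$ and use that the functionals in $\mathcal{D}_N(\{\mal\})$ vanish on the monomials of total degree at least $N$ and, after rescaling, are dual to those of total degree between $1$ and $N-1$, so their common kernel is $\K\oplus\mathfrak{m}_{\mal}^N$. Your version is slightly more careful than the paper's: it explicitly handles the constant monomial $1$, which the paper's ``dual to the complement $T\setminus T_N$'' glosses over, and it notes that characteristic $0$ is needed for the factorials to be nonzero.
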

\begin{proof}
    Let a vector space basis $T$ be given for $\Kx$ as 
    \[
        T = 
        \left\{
                (x_{1} - \alpha_{1})^{a_{1}}
                \ldots
                (x_{n} - \alpha_{n})^{a_{n}}
            :
            \bm{a} = (a_{1}, \ldots, a_n) \in \N^{n}
        \right\}.
    \]
    Then $Q'_N(\{\mal\})$ is generated by the subset $T_N \subset T$ consisting
    of all polynomials in $T$ of total degree $\geq N$. Meanwhile, the elements
    of $\mathcal{D}_{N}(\{\mal\})$ vanishes on all of the monomials in $T_N$,
    and after suitable rescaling, form a dual basis to its complement $T
    \setminus T_N$. It follows that $Q'_N(\{\mal\}) =
    \ker(\mathcal{D}_{N}(\{\mal\})) = Q_{N}(\{\mal\})$ and we are done.
    
\end{proof}

We are now ready to prove Lemma \ref{lm-D_of_Q}.

\begin{proof}[Proof of Lemma \ref{lm-D_of_Q}]
	Our first aim is to show that the statement of the lemma holds when $S =
		\{\mal\}$, i.e that 
	\begin{equation}\label{eq-D_of_Q_singl_spec}
		\mathcal{D}_{\mal}(Q_N(\{\mal\}))
		=
		\langle
		\mathcal{D}_{2N}(\{\mal\})
		\setminus
		\mathcal{D}_{N}(\{\mal\})
		\rangle.
	\end{equation}
    We already know that $Q_N(\{\mal\}) = Q'_N(\{\mal\})$, and we will show
    that
    \[
        \mathcal{D}_{\mal}(Q'_N(\{\mal\}))
        =
        \langle
        \mathcal{D}_{2N}(\{\mal\})
        \setminus
        \mathcal{D}_{N}(\{\mal\})
        \rangle.
    \]
    We simplify by performing a change of variables so that $\mal \mapsto
    \bm{0}$. In this setting, a minimal SAGBI basis for $Q'_N(\{\bm{0}\})$ is
    given by all monomials of total degrees between $N$ and $2N-1$ inclusively.
    There are 
	\[
		\binom{n + 2N - 1}{n} - \binom{n + N - 1}{n}
	\]
	such monomials and it follows from Theorem \ref{th-dim_D} that 
	\[
		\dim(\mathcal{D}_{\mal}(Q'_N(\{\bm{0}\})))
		\leq 
		\binom{n + 2N - 1}{n} - \binom{n + N - 1}{n}.
	\]
	We also have that 
	\[
		|
		\mathcal{D}_{2N}(\{\bm{0}\})
		\setminus
		\mathcal{D}_{N}(\{\bm{0}\})
		|
		=
		\binom{n + 2N - 1}{n} - \binom{n + N - 1}{n},
	\]
    so equality in equation (\ref{eq-D_of_Q_singl_spec}) would follow in the $\mal =
    \bm{0}$ case if we could show that the linear functionals in
    $\mathcal{D}_{2N}(\{\bm{0}\}) \setminus \mathcal{D}_{N}(\{\bm{0}\})$ are
    linearly independent when restricted to $Q'_N(\bm{0})$ and that they define
    $\bm{0}$-derivations here. \\ 

    For linear independence, it suffices to note that
    $\mathcal{D}_{2N}(\{\bm{0}\}) \setminus \mathcal{D}_{N}(\{\bm{0}\})$ forms
    a dual space to the subspace of $Q'_N(\bm{0})$ spanned by the monomials
    $x^{\bm{a}}$ with total degree $\vert \bm{a} \vert \in [N..2N-1]$. \\
    
    Now we verify that the elements in $\mathcal{D}_{2N}(\{\bm{0}\}) \setminus
    \mathcal{D}_{N}(\{\bm{0}\})$ define $\bm{0}$-derivations on $Q'_N(\bm{0})$.
    Let $D : f \to
    f^{\vert d \vert}_{d}(\bm{0})$ with $\vert d \vert \in [N .. 2N - 1]$ be
    some element in $\mathcal{D}_{2N}(\{\bm{0}\}) \setminus
    \mathcal{D}_{N}(\{\bm{0}\})$. The General Leibniz rule says that
    \[
        D(fg) = 
        \sum_{d' \in \mathcal{P}(d)}
        f^{(|d'|)}_{d'}(\bm{0})
        g^{(|d| - |d'|)}_{d \setminus d'}(\bm{0}).
    \]
    Now, we already know that $Q'_N(\bm{0}) = Q_N(\bm{0}) =
    \ker(\mathcal{D}_{N}(\{\bm{0}\}))$, and if we inspect the Leibniz expansion
    above when $f, g \in Q'_N(\bm{0})$, we see that all middle terms vanish
    since either $\vert d \vert < N$ or $\vert d' \vert < N$. Hence $D$ is a
    derivation on $Q'_N(\bm{0})$ and we 
	\[
		\mathcal{D}_{\bm{0}}(Q'_N(\{\bm{0}\}))	
		=
		\langle
		\mathcal{D}_{2N}(\{\bm{0}\})
		\setminus
		\mathcal{D}_{N}(\{\bm{0}\})
		\rangle.
	\]
    Undoing our change of variables yields
	\[
		\mathcal{D}_{\mal}(Q'_N(\{\mal\}))	
		=
		\mathcal{D}_{\mal}(Q_N(\{\mal\}))	
		=
		\langle
		\mathcal{D}_{2N}(\{\mal\})
		\setminus
		\mathcal{D}_{N}(\{\mal\})
		\rangle.
	\]
    By Theorem \ref{th-conditions_on_other_clusters}, if we have two 
    distinct elements $\mal_i, \mal_j \in S$ then 
    \[
        \mathcal{D}_{\mal_i}\left(Q_N(\{\mal_i\}) \cap Q_N(\{\mal_j\})\right)
        =
        \mathcal{D}_{\mal_i}(Q_N(\{\mal_i\})),
    \]
    after which induction over the elements of $S$ yields
	\[
		\mathcal{D}_{\mal_i}\left(
		\bigcap_{\mal \in S} Q_N(\{\mal\})
		\right)
        =
        \mathcal{D}_{\mal_i}(Q_N(\{\mal_i\}))
		=
		\langle
		\mathcal{D}_{2N}(\{\mal_i\})
		\setminus
		\mathcal{D}_{N}(\{\mal_i\})
		\rangle,
	\]
    Now we may use Theorem \ref{th-connect_clusters_direct_sum_deriv_space}
    inductively to obtain
	\[
		\mathcal{D}_{\mal}(Q_N(S))	
		=
		\bigoplus_{\mal \in S}
		\langle
		\mathcal{D}_{2N}(\{\mal\})
		\setminus
		\mathcal{D}_{N}(\{\mal\})
		\rangle
		=
		\langle
		\mathcal{D}_{2N}(S)
		\setminus
		\mathcal{D}_{N}(S)
		\rangle
	\]
	which completes our proof.	
	
\end{proof}

We are now ready to prove that $Q_N = Q'_N$ in general.

\begin{lemma}\label{lm-Qprime=Q}
	Let $S \subset \K^n$ be a finite set of points and $N \in \N$. Then
	$Q'_N(S) = Q_N(S)$.	
\end{lemma}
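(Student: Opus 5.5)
We already have $Q'_N(S) \subseteq Q_N(S)$ by Corollary \ref{cor-Qprime_in_Q}, so the plan is to prove the reverse inclusion by a codimension count. Both spaces have finite codimension in $\Kx$; hence it suffices to show
\[
\codim\left(Q'_N(S)\right) \leq \codim\left(Q_N(S)\right).
\]
By definition $Q_N(S)$ is cut out by the $|S| - 1$ functionals in $\mathcal{E}(S)$ together with the $|S|\binom{n+N-1}{n}$ functionals in $\mathcal{D}_N(S)$. Therefore
\[
\codim(Q_N(S)) \leq |S| - 1 + |S|\left(\binom{n+N-1}{n}-1\right)+\ldots,
\]
which I will organise instead as $|S|\cdot \dim\left(\Kx/\mathfrak{m}_{\mal}^{N}\right) - 1$. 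Here $\dim(\Kx/\mathfrak{m}_{\mal}^N)$ counts the constant together with all derivative evaluations of order $<N$ at $\mal$.

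For the other side, note that the ideal $\Pi_N(S)\Kx$ is exactly the product of ideals $\prod_{\mal\in S}\mathfrak{m}_{\mal}^N$. The ideals $\mathfrak{m}_{\mal}^N$ for distinct $\mal$ are pairwise comaximal, since their radicals are distinct maximal ideals. So by the Chinese Remainder Theorem their product equals their intersection, and
\[
\Kx/\Pi_N(S)\Kx \cong \prod_{\mal \in S} \Kx/\mathfrak{m}_{\mal}^N.
\]
Consequently $\dim(\Kx/\Pi_N(S)\Kx) = |S|\dim(\Kx/\mathfrak{m}^N_{\mal})$. Since $Q'_N(S) = \K \oplus \Pi_N(S)\Kx$, with the constants not lying in the proper ideal, we obtain $\codim(Q'_N(S)) = |S|\dim(\Kx/\mathfrak{m}^N_{\mal}) - 1$. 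This matches the upper bound for $\codim(Q_N(S))$, because the functionals defining $Q_N(S)$ number $(|S|-1) + |S|(\dim(\Kx/\mathfrak{m}^N_{\mal})-1)$, which is the same quantity.

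The remaining steps are as follows. Combining $Q'_N(S)\subseteq Q_N(S)$ with $\codim Q'_N(S) \le \codim Q_N(S)$ gives equality of codimensions, and hence equality of the spaces. The main point needing care is the identification $\Pi_N(S)\Kx = \prod_{\mal}\mathfrak{m}_{\mal}^N$. It holds because $P_N(\mal)$ generates $\mathfrak{m}_{\mal}^N$, as $\mathfrak{m}_{\mal}$ is generated by the $x_i-\alpha_i$, and $\Pi_N(S)$ consists of all products of these generators, which generate the product ideal. Alternatively, if one wishes to avoid CRT, the lemma $Q'_N(\{\mal\})=Q_N(\{\mal\})$ together with Corollary \ref{cor-Qprime_in_Q} and a dimension count via Lemma \ref{lm-intersect_lin_funcs} could be used.
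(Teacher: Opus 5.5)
Your setup and your CRT computation $\codim(Q'_N(S)) = K-1$, with $K=|S|\binom{n+N-1}{n}$, are correct. The gap is that the inequality you then obtain points the wrong way.

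Because $Q'_N(S)\subseteq Q_N(S)$, you need $\codim(Q'_N(S))\le\codim(Q_N(S))$, i.e.\ a \emph{lower} bound on $\codim(Q_N(S))$. Counting the functionals in $\mathcal{E}(S)\cup\mathcal{D}_N(S)$ only gives the \emph{upper} bound $\codim(Q_N(S))\le K-1$. (Incidentally, $\mathcal{D}_N(S)$ has $|S|\bigl(\binom{n+N-1}{n}-1\bigr)$ elements, not $|S|\binom{n+N-1}{n}$, though your final total $K-1$ is right.) Combined with $\codim(Q'_N(S))=K-1$, this only yields $\codim(Q_N(S))\le\codim(Q'_N(S))$, which the inclusion already gave you. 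So your final step ``combining \ldots\ gives equality'' rests on an inequality you never proved. Once $\codim(Q'_N(S))=K-1$ is known, the lemma is \emph{equivalent} to the linear independence on $\Kx$ of the $K-1$ functionals defining $Q_N(S)$. Saying the count ``matches the upper bound'' skips exactly the content of the statement.

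The gap closes with tools you already have. Every functional in $\mathcal{E}(S)\cup\mathcal{D}_N(S)$ vanishes on $I=\bigcap_{\mal\in S}\mathfrak{m}_{\mal}^N$, so they all factor through $\Kx/I\cong\prod_{\mal\in S}\Kx/\mathfrak{m}_{\mal}^N$. On the factor at $\mal$, evaluation at $\mal$ together with $\mathcal{D}_N(\{\mal\})$ is, up to the nonzero scalars $\bm{a}!$ (characteristic $0$), the dual basis of the Taylor monomials $(\mx-\mal)^{\bm{a}}$ with $|\bm{a}|<N$. Hence the $K$ evaluations and derivative evaluations at the points of $S$ are linearly independent on $\Kx$; this is where surjectivity in CRT, i.e.\ Hermite interpolation, is really used. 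The $K-1$ functionals in $\mathcal{E}(S)\cup\mathcal{D}_N(S)$ are then independent too, giving $\codim(Q_N(S))=K-1$.

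Even more directly, you can avoid counting altogether. Take $f\in Q_N(S)$ and set $c=f(\mal)$ for some $\mal\in S$. Then $f-c$ vanishes at every point of $S$, together with all its derivatives of order $<N$. By the single-point case $Q'_N(\{\mal\})=Q_N(\{\mal\})$, it lies in $\bigcap_{\mal\in S}\mathfrak{m}_{\mal}^N=\prod_{\mal\in S}\mathfrak{m}_{\mal}^N=\Pi_N(S)\Kx$, whence $f\in Q'_N(S)$.

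Repaired this way, your argument is purely commutative-algebraic and much shorter than the paper's. The paper instead uses Theorem~\ref{th-gorin} and the description of $\mathcal{D}_{\mal}(Q_N(S))$ from Lemma~\ref{lm-D_of_Q} to rule out, type by type, any nonzero subalgebra condition on $Q_N(S)$ that vanishes on $Q'_N(S)$.
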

\begin{proof}
	We already know from Corollary \ref{cor-Qprime_in_Q} that $Q'_N(S) \subset
		Q_N(S)$. From Theorem \ref{th-gorin}, it follows that
	\[
		Q'_N(S)
		=
		Q_N(S) \cap \bigcap_{L \in \mathcal{L}} L,
	\]
	for some set of subalgebra conditions $\mathcal{L}$. We will prove the
	theorem by showing that $\mathcal{L}$ is empty, which in turn will be
	proven by showing that no non-zero subalgebra conditions over $Q_N(S)$
	vanishes on $Q'_N(S)$. We have three kinds of subalgebra conditions to
	consider,
	\begin{itemize}
		\item Non-trivial $\mal$-derivations, i.e elements of
		      \[
			      \mathcal{D}_{\mal}(Q_N(\Sp(A))) 
			      = 
			      \mathcal{D}_{2N}(\{\mal\})
			      \setminus
			      \mathcal{D}_N(\{\mal\})
		      \]
		      for $\mal \in S$.
		\item Trivial $\mal$-derivations.
		\item Character differences.
	\end{itemize}
	
    We begin with the case when $D \in \mathcal{D}_{\mal}(Q_N(\Sp(A)))$ is a
    non-trivial $\mal$-derivation, which by Lemma \ref{lm-D_of_Q} is given as a
    linear combination of functions of the form
	\[
		D_{d}(f) = f^{(j)}_{d}(\mal)
	\]
    for $d \in \bm{d}^j$ and $j \in [N..2N-1]$. We will treat the case of
    $\mal$-derivations of the form $D_{d}$ first, after which obtaining the
    result for linear combinations of such $\mal$-derivations will follow
    easily. For a given $d \in \bm{d}^{j}$, we will construct a polynomial in
    $Q'_N(\Sp(A))$ on which the $\mal$-derivation $D_{d}$ is not zero. Let 
	\[
		\pi_1(\bm{x})
		=
		\prod_{x_i \in d} (x_i - \alpha_i).
	\]
	Then $\pi_1 = f \widehat{\pi}_1$ for some $\widehat{\pi}_1 \in
		P_N(\mal), f \in \Kx$ and also $D(\pi_1) = M$ for some integer $M$. For
	every spectral element in $\mbe \in \Sp(A) \setminus \{\mal\}$
	there must exist some index $i(\mbe)$ such that $\beta_{i(\mbe)}
		\not = \alpha_{i(\mbe)}$. Construct 
	\[
		\pi_2(\bm{x})
		=
		\prod_{\mbe \in \Sp(A) \setminus \{\mal\}}
		(x_{i(\mbe)} - \beta_{i(\mbe)})^N.
	\]
	Then $\pi_2(\mal) \not = 0$ and $\pi = \pi_1 \pi_2 \in \Pi_N(\Sp(A))
		\subset Q'_N(\Sp(A))$. Applying our derivation, we get 
	\begin{align*}
		D(\pi)
		 & =
		(\pi_1 \pi_2)^{(j)}_{d}(\mal) \\
		 & =
		(\pi_1)^{(j)}_{d}(\mal)
		\pi_2(\mal)
		+
		\pi_1(\mal)
		(\pi_2)^{(j)}_{d}(\mal)       \\
		 & =
		M	
		\pi_2(\mal)                       \\
		 & \not =
		0.
	\end{align*}
	For arbitrary $\mal$-derivations, we can combine linear combinations
	of polynomials like $\pi$ above, one for each derivative evaluation, and
	since $\K$ is infinite, we can choose scalars in such a way that the given
	$\mal$-derivation does not vanish on the linear combination of
	polynomials. \\
	
    To see that no extra trivial derivation belongs to $\mathcal{L}$, let $\mga
    \not \in \Sp(A)$, and consider a $\mga$-derivation of the form $D_i : f
    \mapsto f'_{x_i}(\mga)$. For every spectral element $\mal \in \Sp(A)$,
    there is an index $i(\mal)$ such that $\mal_{i(\mal)} \not =
    \mga_{i(\mal)}$. Now we can construct a polynomial
    \[
        \pi_3 =  
		\prod_{\mal \in \Sp(A)}
		(x_{i(\mal)} - \mal_{i(\mal)})^N \in \Pi_{N}(\Sp(A))
    \]
    such that $\pi_3(\mga) \not = 0$. Then if we multiply $\pi_3$ by $x_i -
    \gamma_i$ we get that $D((x_i - \gamma_i)\pi_3) = \pi_3(\mga) \not = 0$.
    We extend the result to deal with arbitrary $\mga$-derivations in exactly
    the same way as we did for non-trivial $\mal$-derivations above. \\
	
    To see that no character difference belongs to $\mathcal{L}$, let $\mga
    \not \in \Sp(A)$ and construct $\pi_3$ like above. Let $\mde \in \K^n$ such
    that $\mde \not = \mga$. Then there exists some index $i$ such that
    $\delta_i \not = \gamma_i$. Let $h = (x_i - \delta_i)\pi_3 \in
    Q'_N(\Sp(A))$. Then $h(\mga) - h(\mde) = (\gamma_i - \delta_i)h(\mga) \not
    = 0$, and $\mga \not \sim \mde$ in $Q'_N(\Sp(A))$. 
    
\end{proof}

Lemma \ref{lm-A_contains_Q} now follows trivially,
\begin{proof}[Proof of Lemma \ref{lm-A_contains_Q}]
	Follows immediately from Lemmas \ref{lm-ideal_subalg_in_A} and
	\ref{lm-Qprime=Q},
	
\end{proof}
and we can finally consider the Main Theorem as settled.

\section{Acknowledgements}
	I want to express my deepest gratitude to my supervisors Anna Torstensson
	and Victor Ufnarovski for all their guidance and support. This paper would
	not have been possible without their help. 

\printbibliography

\end{document}